\newtheorem{definition}{Definition}[section]
\newtheorem{theorem}{Theorem}[section]
\newtheorem{lemma}{Lemma}[section]
\newtheorem{proposition}{Proposition}[section]
\newtheorem{corollary}{Corollary}[section]
\newtheorem{remark}{Remark}[section]
\newtheorem{claim}{Claim}[section]
\begin{document}

\title{Moduli Spaces of the Basic Hitchin Equation on Sasakian Threefolds}

\author{Takashi Ono\thanks{Research Institute for Mathematical Sciences, Kyoto University, Kyoto, Japan, takashio@kurims.kyoto-u.ac.jp}}
\date{}
\maketitle

  \begin{abstract}
In this paper, we study an equation which we call the basic Hitchin equation. 
This is an equation defined on Sasakian threefolds and is a three-dimensional analog of the Hitchin equation, which is defined on Riemann surfaces.
We construct the moduli space of the basic Hitchin equation and show that such a space admits a hyperK\"ahler metric. This also shows that the moduli space of flat bundles over Sasakian threefolds admits a hyperK\"ahler metric. We also calculate the dimension of the moduli space.
\end{abstract}
\noindent
MSC(2020): 14D21, 53C25, 53D30\\
Keywords: Sasakian manifold, Moduli space, basic Higgs bundle, harmonic bundle
\section{Introduction}
Let $X$ be a compact Riemann surface of genus at least two.
 Let $E$ be a complex vector bundle over $X$ and $h$ be a Hermitian metric. 
 Let $(\nabla_h,\Phi)$ be a pair of a $h$-unitary connection and an $\mathrm{End}E$-valued skew-symmetric 1-form w.r.t. $h$. 
 In \cite{H}, Hitchin considered the following equations:
\begin{align*}
F_{\nabla_h}-\Phi\wedge\Phi&=0,\\
\nabla_h\Phi&=0,\\
\nabla_h\ast\Phi&=0.
\end{align*}
Here $F_{\nabla_h}$ is the curvature of $\nabla_h$ and $\ast$ is the Hodge star. 
This equation is called the \textit{Hitchin equation.} 
We say $(\nabla_h,\Phi)$ is a Hitchin pair if it satisfies the Hitchin equation and is irreducible if the connection $D:=\nabla_h+\sqrt{-1}\Phi$ is irreducible.
In \cite{H}, he also constructed the moduli space $\mathcal{M}_{\mathrm{Hit}}$ of irreducible Hitchin pairs by infinite-dimensional hyperK\"ahler reduction and showed that it carries a natural hyperK\"ahler metric. \par
Hitchin equation is related to \textit{Higgs bundles} and \textit{flat bundles}.
To consider a Hitchin pair is equivalent to considering a polystable Higgs bundle of degree 0 and semisimple flat bundles.
An irreducible Hitchin pair corresponds to a stable Higgs bundle with degree 0 and a simple flat bundle. 
Hence, we can regard $\mathcal{M}_{\mathrm{Hit}}$ as a moduli space of stable Higgs bundles and simple flat bundles. 
$\mathcal{M}_{\mathrm{Hit}}$ intersects with many subjects, and the research of its properties is one of the active topics in modern mathematics.
\par
Let $M$ be a compact Sasakian manifold. Sasakian manifolds are odd-dimension analogs of K\"ahler manifolds.
See \cite{BG} for more details about Sasakian manifolds.
In this paper, we focus on the case of $\mathrm{dim}M=3$. 
We call such an $M$ a Sasakian threefold. In this case, $M$ is a three-dimensional analog of the Riemann surface.\par
On Sasakian manifolds, a natural analog of Higgs bundles is given by the notion of basic Higgs bundles. 
Basic Higgs bundles and their relation to flat bundles on compact Sasakian manifolds have been studied previously (see e.g. \cite{BH, BH2, WZ}).
Motivated by the classical Hitchin equation and the theory of basic Higgs bundles, 
we formulate the corresponding gauge-theoretic equations on Sasakian threefolds, which we call the \textit{basic Hitchin equation}.\par
Let $E$ be a \textit{basic} complex vector bundle and $h$ be a \textit{basic} Hermitian metric (See Section \ref{basic} for definitions about basic vector bundles and metrics). 
Let $(\nabla_h,\Phi)$ be a pair of basic $h$-unitary connection and $\Phi$ be a basic $\mathrm{End}E$-valued skew-symmetric 1-form w.r.t. $h$. 
Then the basic Hitchin equation is the following:
\begin{align*}
F_{\nabla_h}-\Phi\wedge\Phi&=0,\\
\nabla_h\Phi&=0,\\
\nabla_h\star_\xi\Phi&=0. 
\end{align*} 
Here $\star_\xi$ is the basic Hodge star (See Section \ref{b-pre}). We call a pair $(\nabla_h,\Phi)$ a basic Hitchin pair if the pair satisfies the basic Hitchin equation.
The main result of this paper is the construction of the moduli space $\mathcal{M}^{\mathrm{irr}}_{\mathrm{BaHit}}$ of irreducible basic Hitchin pairs. 
Moreover, we have
\begin{theorem}[Theorem \ref{main-th}]\label{thm 1.1}
$\mathcal{M}^{\mathrm{irr}}_{\mathrm{BaHit}}$ is an empty set or a smooth hyperK\"ahler manifold.
\end{theorem}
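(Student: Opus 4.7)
The plan is to carry out an infinite-dimensional hyper-K\"ahler reduction in the spirit of Hitchin's original construction, with the K\"ahler geometry of the Riemann surface replaced by the transverse K\"ahler geometry of the Sasakian three-fold.

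First, I would fix the basic Hermitian bundle $(E,h)$ and consider the space $\mathcal{B}$ of all pairs $(\nabla_h,\Phi)$, where $\nabla_h$ is a basic $h$-unitary connection and $\Phi$ is a basic $1$-form with values in the skew-Hermitian endomorphisms of $E$, without imposing any of the basic Hitchin equations. After Sobolev completion, $\mathcal{B}$ is an affine Hilbert manifold whose tangent space at any point is the vector space $V=\Omega^1_B(\mathfrak{u}(E))\oplus\Omega^1_B(\mathfrak{u}(E))$. Using the transverse complex structure on the contact distribution, the basic Hodge star $\star_\xi$, and the $L^2$-pairing, I would define three almost complex structures $I,J,K$ on $V$ and verify the quaternionic relations; in particular, $J$ and $K$ should interchange the two summands while $I$ acts on each factor by the transverse complex structure. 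Together with the natural $L^2$-metric this gives $\mathcal{B}$ the structure of a flat hyper-K\"ahler Hilbert manifold.

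Next, I would introduce the basic unitary gauge group $\mathcal{G}_B$, whose Lie algebra is $\Omega^0_B(\mathfrak{u}(E))$, and let it act on $\mathcal{B}$ by the usual conjugation on $\nabla_h$ and adjoint action on $\Phi$. Because all tensors and operators in sight are basic, this action preserves the three K\"ahler forms and the $L^2$-metric. An integration-by-parts computation then identifies the three components of the hyper-K\"ahler moment map $\mu=(\mu_I,\mu_J,\mu_K)$ with the three expressions $F_{\nabla_h}-\Phi\wedge\Phi$, $\nabla_h\Phi$, and $\nabla_h\star_\xi\Phi$, up to constants and a suitable complex splitting, so that $\mu^{-1}(0)$ is precisely the space of basic Hitchin pairs.

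Finally, I would apply the hyper-K\"ahler quotient theorem of Hitchin--Karlhede--Lindstr\"om--Ro\v{c}ek: the quotient $\mu^{-1}(0)/\mathcal{G}_B$ carries a natural hyper-K\"ahler structure wherever $\mathcal{G}_B$ acts freely and $0$ is a regular value of $\mu$. Over the open subset of irreducible pairs the stabilizer in $\mathcal{G}_B$ is reduced to the constant central scalars, so $\mathcal{G}_B$ modulo its center acts freely; what remains is to show that $0$ is a regular value, equivalently that the second cohomology of the basic deformation complex vanishes at irreducible points. I expect this analytic step to be the main obstacle, since it requires enough basic Hodge theory and elliptic regularity on Sasakian three-folds, worked out transverse to the Reeb foliation, to justify a Kuranishi slice, transversality of $\mu$, and the passage from Sobolev to smooth representatives. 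Once these ingredients are in place, smoothness of $\mathcal{M}^{\mathrm{irr}}_{\mathrm{BaHit}}$ and the existence of a hyper-K\"ahler metric on it follow at once from the quotient construction.
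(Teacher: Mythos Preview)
Your moment-map strategy is natural, but there is a concrete gap at the key analytic step. You anticipate that the second cohomology $\mathbb{H}^2$ of the deformation complex vanishes at irreducible points, so that $0$ is a regular value of $\mu$. In this setting that is false: the paper computes $\dim_{\mathbb R}\mathbb{H}^2 = 3$, with $\mathrm{Ker}\,D_2^\ast = \langle\sqrt{-1}\,d\eta\,\mathrm{Id}_E\rangle_{\mathbb R}^{\oplus 3}$. Thus the linearised equations are not surjective and the hyper-K\"ahler quotient theorem does not apply directly. The mechanism that rescues smoothness is algebraic rather than analytic: the Kuranishi obstruction $\widetilde{\alpha\wedge\alpha}$ always lands in $A^2_B(\mathfrak{su}(E))^{\oplus 3}$ (each entry is built from commutators and is therefore traceless), whereas every harmonic representative of $\mathbb{H}^2$ is a scalar multiple of $\mathrm{Id}_E$; the harmonic projection of the obstruction is consequently forced to vanish. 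The same phenomenon already appears for $U(n)$-Hitchin pairs on a Riemann surface, which is why Hitchin worked with $SU(2)$. Your outline would need either this trace argument or a passage to a structure group with trivial centre before the quotient theorem can be invoked.

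The paper also takes a different route to the hyper-K\"ahler structure itself. Rather than appealing to the Hitchin--Karlhede--Lindstr\"om--Ro\v{c}ek theorem, it first establishes smoothness via the Kuranishi map, defines $I,J,K$ directly on each tangent space $\mathbb{H}^1$ and checks that $\mathrm{Ker}\,D_1^\ast\cap\mathrm{Ker}\,D_2$ is preserved, and then shows by explicit computation that both the $L^2$-metric $g$ and each K\"ahler form $\omega_{\mathcal I},\omega_{\mathcal J},\omega_{\mathcal K}$ have vanishing first derivative at the origin of every Kuranishi chart. This makes the Kuranishi coordinates normal and forces $\mathcal I,\mathcal J,\mathcal K$ to be parallel for the Levi-Civita connection of $g$, hence integrable with closed K\"ahler forms. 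Your quotient picture, once patched for the centre, would reach the same conclusion more conceptually; the paper's hands-on approach trades that elegance for explicit normal coordinates and for not having to justify an infinite-dimensional HKLR theorem on top of the same transverse-elliptic package.
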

The construction of the hyperK\"ahler structure in our setting differs from the classical case. 
In the Riemann surface case, the complex structures underlying the hyperK\"ahler metric are naturally induced by the ordinary Hodge star operator acting on one-forms. 
Since we work on a three-dimensional Sasakian manifold, the ordinary Hodge star does not produce the required complex structures. 
Instead, we use the basic Hodge star operator defined by the transverse K\"ahler metric of the Sasakian structure, which replaces the role of the ordinary Hodge star in the classical case.
See Section \ref{b-pre} for the definition of the basic Hodge star.
\par

As in the Riemann surface case, the basic Hitchin equation is related to flat bundles and Higgs bundles. 
Since Higgs bundles are holomorphic objects, we need basic Higgs bundles.  
We recall this relation in Section \ref{Hi-BH} and \ref{Ha-BH}. 
Hence, we can regard $\mathcal{M}^{\mathrm{irr}}_{\mathrm{BaHit}}$ as a moduli space of simple flat bundles with a fixed basic structure and stable basic Higgs bundles of degree 0.
\par

We also compute the dimension of $\mathcal{M}^{\mathrm{irr}}_{\mathrm{BaHit}}$ under suitable assumptions.

\begin{theorem}[Theorem \ref{dim}]\label{thm 1.2}
Let $(M,(T^{1,0},S,I),(\eta,\xi))$ be a regular Sasakian threefold (See Section \ref{Sasa} for the definition of regular). Let $E$ be a regular basic bundle (See Section \ref{reg bun}) and $h$ be a basic Hermitian metric. Let $g$ be the genus of $M/S^1$. We assume $g\geq 2.$
Then 
\begin{equation*}
 \mathrm{dim}_{\mathbb{R}}\mathcal{M}^{\mathrm{irr}}_{\mathrm{BaHit}}=4(\mathrm{rk}E)^2(g-1)+4.
 \end{equation*}
\end{theorem}
 In the case of a compact Riemann surface of genus $g \ge 2$, the dimension of the Hitchin moduli space was computed in \cite{H,N}.
 Therefore, Theorem \ref{thm 1.2} can be regarded as the Sasakian counterpart of the classical dimension formula for the Hitchin moduli space under suitable assumptions.
 \subsubsection*{Relation to other works}
When $M$ is quasi-regular (See Section \ref{Sasa}), then $M$ is a total space of an $S^1$-bundle over a cyclic orbifold  \cite[Chapter 7]{BG}. 
This is a special case of the Seifelt bundle.
In \cite{BSc}, they study the character variety of the fundamental group of the Seifert bundle. 
From the non-abelian Hodge theory \cite{BH}, the character variety in \cite{BSc} and $\mathcal{M}^{\mathrm{irr}}_{\mathrm{BaHit}}$ should be homeomorphic. The author hopes our space is useful for their work. \par
For the higher-dimensional case, there is a work by Kasuya \cite{Ka}. 
He studied the moduli of flat bundles over general Sasakian manifolds and showed that the moduli have stratification by the basic structure.
He also showed that the moduli space of the flat bundle over a Sasakian threefold is smooth.
While the structure group of his work is $SL(r,\mathbb C)$, we consider $GL(r,\mathbb C)$ in our setting.
However, this difference does not affect the main analytic arguments, and our results can be adapted to the $SL(r,\mathbb C)$ case with only minor modifications. 
In particular, via the non-Abelian Hodge correspondence on a Sasakian manifold, one of the complex structures arising from our hyperK\"ahler structure can be naturally identified with the complex structure considered in his work.
\par
We would also like to mention the connection between the anti-self-dual (ASD) contact instanton and the basic Hitchin equation.
An ASD contact instanton is a solution of a PDE defined on a contact manifold of dimension at least five.
The moduli space in dimension five was studied in \cite{BHa0}, while the seven-dimensional case was investigated in \cite{PE}.
In these higher-dimensional cases, the smoothness problem and the dimension counting become more delicate because obstructions may appear.\par
In the Appendix (Section \ref{sec 5.4}), we show that the basic Hitchin equation arises as a dimensional reduction of the ASD contact instanton equation in five dimensions, which is analogous to the fact that the Hitchin equation arises as a dimensional reduction of the four-dimensional ASD Yang--Mills equation, as pointed out by Hitchin \cite{H}.

\subsubsection*{Acknowledgement} 
The author thanks his supervisor, Hisashi Kasuya, for his enormous support and helpful advice.  
The author thanks M. Benyoussef for explaining their work. The author thanks Akase Kohei for answering his countless elementary questions about analysis. 
The author would like to thank the anonymous referees for their helpful comments that improved the paper.
This work was supported by JSPS KAKENHI Grant Number JP24KJ1611.

\section{Sasakian Manifolds}
\subsection{Sasakian Manifolds}\label{Sasa}
Let $M$ be a (2$n$+1)-dimensional real smooth manifold. Let $TM\otimes \mathbb{C}$ be the complexified tangent bundle of $TM$.  A \textit{CR-structure} on $M$ is a rank $n$ complex sub-bundle $T^{1,0}$ of $TM\otimes \mathbb{C}$ such that $T^{1,0}\cap\overline{T^{1,0}}=0$ and $T^{1,0}$ is integrable. We denote $\overline{T^{1,0}}$ as $T^{0,1}$. For a CR-structure   $T^{1,0}$ on $M$, there is an unique sub-bundle of rank $2n$ of real tangent bundle $TM$ with a vector bundle homomorphism $I:S\to S$ such that the following properties holds:
\begin{itemize}
\item $I^2$ = $-\textrm{Id}_S$,
\item $T^{1,0}$ is the $\sqrt{-1}$-eigen bundle of $I$.
\end{itemize}
A (2$n$+1)-dimensional manifold $M$ is equipped with a triple $(T^{1,0}, S, I)$ is called a \textit{CR-manifold}. A \textit{contact 1-form} $\eta$ of $M$ is a non-degenerate 1-form of $M$ (i.e. $\eta\wedge(d\eta)^{n}$ is everywhere non-zero). By the non-degeneracy of $\eta$, there exists a vector field $\xi$ called \textit{Reeb vector field}  such that it satisfies
\begin{equation*}
\eta(\xi)=1,\xi\lrcorner(d\eta)^{n}=0.
\end{equation*}
  A \textit{contact CR manifold} is a CR-manifold $M$ with a contact 1-form $\eta$ such that \textrm{Ker}$(\eta)=S$. For a contact CR-manifold, the above $I:S\to S$ extends to the entire $TM$ by setting $I(\xi)=0$. Here $\xi$ is the Reeb vector field of $\eta$.
  
\begin{definition}
A contact CR-manifold $(M,(T^{1,0},S,I),(\eta,\xi))$ is a strongly pseudo-convex CR-manifold if the Hermitian form $L_\eta$ on $S_x$ defined by $L_\eta(X,Y)=d\eta(X,IY), X, Y\in S_x$ is positive definite for every point $x\in M$.
\end{definition} 
For a strongly pseudo-convex CR-manifold $(M,(T^{1,0},S,I),(\eta,\xi))$, we have a canonical Riemann metric $g_\eta$ on $M$ which is defined by 
\begin{equation*}
g_\eta(X,Y):=L_\eta(X,Y)+\eta(X)\eta(Y), X,Y\in T_xM.
\end{equation*}
\begin{definition}
A Sasakian manifold is a strongly pseudo-convex CR-manifold 
\begin{equation*}
(M,(T^{1,0},S,I),(\eta,\xi))
\end{equation*}
such that for any section $\zeta$ of $T^{1,0}$, $[\xi,\zeta]$ is also a section of $T^{1,0}$. For a Sasakian manifold, we call $g_\eta$ as Sasaki metric.
\end{definition}
For a Sasakian manifold $(M,(T^{1,0},S,I),(\eta,\xi))$, the metric cone of $(M,g_\eta)$ is a K\"ahler manifold. We can also define a Sasakian manifold as a contact metric manifold whose metric cone is K\"ahler.\par
Let $M$ be a Sasakian manifold. If the orbits of the Reeb vector field $\xi$ are all closed, and hence it is a circle, then $\xi$ induces a $S^1$-action on $M$. Since $\xi$ is nowhere zero, then the action is locally free. We say that $M$ is \textit{regular} if the $S^1$-action is free and \textit{quasi-regular} if it is locally free. When the orbit of $\xi$ is not all closed, then we say $M$ is \textit{irregular}.
\subsection{Basic Differential Forms}\label{b-pre}
Thoughrout this section, let $(M,(T^{1,0},S,I),(\eta,\xi))$ be a  $2n+1$-dimensional compact Sasakian manifold.\par
 The Reeb vector field $\xi$ defines a 1-dimensional foliation $\mathcal{F}_\xi$ on $M$. It is known the map $I:TM\to TM$ associated with the CR-structure $T^{1,0}$ defines a transversely complex structure on the foliated manifold $(M,\mathcal{F}_\xi)$. Furthermore, the closed  2-form $d\eta$ is a transversely K\"ahler structure with respect to this transversely complex structure.\par
 A differential form $\omega$ of $M$ is called a \textit{basic diffrential form} if 
 \begin{equation*}
 i_\xi\omega=0,\mathcal{L}_\xi\omega=0.
\end{equation*}
For simplicity, we call a differential form \textit{basic} if it is a basic differential form. We note that $\eta$ is not basic but $d\eta$ is basic. We denote $A^*_B(M)$ as the space of real basic differential forms. We note that $A^*_B(M)$ forms a sub-complex of deRham complex $A^*(M)$. We denote as $H^i_B(M)$ to be the $i$-th cohomology of $(A^*_B(M),d)$.\par
Corresponding to the decomposition $S_{\mathbb{C}}=T^{1,0}\oplus T^{0,1}$, we have the bigrading 
\begin{equation*}
A^r_B(M)_{\mathbb{C}}=\bigoplus_{p+q=r}A_B^{p,q}(M)
\end{equation*}
as well as the decomposition of the exterior differential
\begin{equation*}
d|_{A^r_B(M)_{\mathbb{C}}}=\partial_\xi+\overline\partial_\xi
\end{equation*}
on $A^r_B(M)_{\mathbb{C}}$, so that 
\begin{align*}
\partial_\xi:A^{p,q}_B(M)&\to A^{p+1,q}_B(M), \\
\overline\partial_\xi:A^{p,q}_B(M)&\to A^{p,q+1}_B(M).
\end{align*}
We also have the transverse Hodge theory (\cite{EKA, KT}). Let 
\begin{equation*}
\ast:A^{r}(M)\to A^{2n+1-r}(M)
\end{equation*}
be the usual Hodge star operator associated with the Sasaki metric $g_\eta$ and let 
\begin{equation*}
\delta:=-\ast d\ast:A^{r}(M)\to A^{r-1}(M)
\end{equation*} 
be the formal adjoint of the exterior derivative with respect to the $L^2$-norm.\par
We define the linear operator 
\begin{equation*}
\star_\xi:A^{r}_B(M)\to A^{2n-r}_B(M)
\end{equation*}
such that $\star_\xi$ acts on $\omega\in A^{r}_B(M)$ as 
\begin{equation*}
\star_\xi\omega=\ast(\eta\wedge\omega).
\end{equation*}
We also define a few more operators:
\begin{align*}
\delta_\xi:=-\star_\xi d\star_\xi: A^r_B(M)&\to A^{r-1}_B(M),\\
\partial_\xi^\ast:=-\star_\xi \overline\partial_\xi\star_\xi: A^{p,q}_B(M)&\to A^{p-1,q}_B(M),\\
\overline\partial_\xi^\ast:=-\star_\xi \partial_\xi\star_\xi: A^{p,q}_B(M)&\to A^{p,q-1}_B(M),\\
\Lambda:=-\star_\xi &\partial_\xi\star_\xi.
\end{align*}
They are the formal adjoints of $d, \partial_\xi,\overline\partial_\xi$ and $d\eta\wedge$ with respect to the pairing
\begin{equation}
A^r_B(M)\times A^r_B(M):(\alpha,\beta)_B\to\int_M\eta\wedge\alpha\wedge\star_\xi\beta. 
\end{equation}
 The following Proposition might be well-known for specialists, however, we give its detailed proof since it is crucial to define the hyperK\"ahler metric for the moduli spaces.
\begin{proposition}\label{cpx str}
Assume $\mathrm{dim}M=3$. Then 
\begin{equation*}
\star_\xi\circ\star_\xi|_{A^1_B(M)}=-\mathrm{Id}_{A^1_B(M)}.
\end{equation*}
\end{proposition}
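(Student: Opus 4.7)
The plan is to verify the identity pointwise in a $g_\eta$-orthonormal coframe adapted to the Sasakian structure. At an arbitrary point $x\in M$, I would pick a unit vector $e_1\in S_x$ and set $e_2:=Ie_1$; since the Hermitian form $L_\eta$ is preserved by $I$, the basis $\{\xi_x,e_1,e_2\}$ is orthonormal for the Sasaki metric $g_\eta$. Let $\{\eta,e^1,e^2\}$ be the dual orthonormal coframe and fix the orientation so that $\eta\wedge e^1\wedge e^2$ is the positive volume form. Any basic $1$-form $\omega$ annihilates $\xi$, so at $x$ it takes the form $\omega = ae^1+be^2$ for some scalars $a,b$.

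The computational step is to pin down the Hodge star of the two $2$-forms $\eta\wedge e^1$ and $\eta\wedge e^2$. Using the defining identity $\alpha\wedge\ast\beta = g_\eta(\alpha,\beta)\,\eta\wedge e^1\wedge e^2$ on $2$-forms, a direct sign check gives
\begin{equation*}
\ast(\eta\wedge e^1)=e^2,\qquad \ast(\eta\wedge e^2)=-e^1.
\end{equation*}
By the definition $\star_\xi\omega = \ast(\eta\wedge\omega)$ I therefore read off $\star_\xi e^1 = e^2$ and $\star_\xi e^2 = -e^1$. Applying $\star_\xi$ twice to $\omega = ae^1+be^2$ yields $\star_\xi^2\omega = a\star_\xi e^2 - b\star_\xi e^1 = -ae^1-be^2 = -\omega$, which proves the identity at $x$ and hence everywhere.

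Conceptually, this is nothing more than the classical fact that on an oriented Riemannian $2$-manifold the Hodge star acts on $1$-forms as an almost complex structure (the case $k=1$, $n=2$ of $\ast\ast = (-1)^{k(n-k)}\mathrm{Id}$). Under the assumption $\mathrm{dim}M=3$, the restriction of $\star_\xi$ to $A^1_B(M)$ is precisely the transverse Hodge star of the $2$-dimensional transverse K\"ahler structure, and the proposition is that classical statement read transversally. I anticipate no substantive obstacle; the only care required is to keep the orientation and sign conventions used to define $\star_\xi$ consistent with those used to compute $\ast$ in the chosen frame, since a sign error there would flip the sign of $\star_\xi^2$.
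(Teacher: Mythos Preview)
Your proof is correct and follows essentially the same route as the paper: a pointwise computation in an adapted orthonormal coframe $\{\eta,e^1,e^2\}$ at an arbitrary point, yielding $\star_\xi e^1=e^2$, $\star_\xi e^2=-e^1$, hence $\star_\xi^2=-\mathrm{Id}$ on basic $1$-forms. Your version is slightly more explicit about orthonormality and orientation, and your closing remark identifying this with the classical $\ast^2=-\mathrm{Id}$ on $1$-forms over an oriented Riemannian surface (read transversally) is a useful conceptual gloss, but the underlying argument is the same.
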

\begin{proof}
To show the equation holds, it is enough to show it holds pointwise. Let $p\in M$ and $(U,x,y,z)$ be a local coordinate around $p$.
We assume 
\begin{equation*}
S_p=\mathbb{R}\bigg(\frac{\partial}{\partial x}\bigg)_p\oplus \mathbb{R}\bigg(\frac{\partial}{\partial y}\bigg)_p
\end{equation*}
and
\begin{equation*}
\bigg(\frac{\partial}{\partial x}\bigg)_p\bot_{g_\eta}\bigg(\frac{\partial}{\partial y}\bigg)_p\bot_{g_\eta}\xi_p.
\end{equation*}
Under the assumption we have 
\begin{align*}
A^1(M)_p&=\mathbb{R}(dx)_p\oplus \mathbb{R}(dy)_p\oplus\mathbb{R}\eta_p,\\
A^1_B(M)_p&=\mathbb{R}(dx)_p\oplus \mathbb{R}(dy)_p,\\
vol_p&=\eta_p\wedge(dx)_p\wedge(dy)_p.
\end{align*}
Hence we have 
\begin{align*}
\star_\xi(dx)_p&=\ast(\eta_p\wedge(dx)_p)=(dy)_p,\\
\star_\xi(dy)_p&=\ast(\eta_p\wedge(dy)_p)=-(dx)_p.
\end{align*}
Hence the claim is proved.
\end{proof}
\section{Basic Bundles}
 \subsection{Basic Vector Bundles}\label{basic}
 Throughout this section, let $(M,(T^{1,0},S,I),(\eta,\xi))$ be a compact Sasakian manifold.\par
 Let $E$ be a rank $r$ complex vector bundle over $M$. 
 We say that $E$ admits a \textit{basic structure} if there exists a local trivialization $\{U_\alpha\}_{\alpha\in A}$ such that the associated transition functions
$g_{\alpha\beta}: U_\alpha \cap U_\beta \to GL_r(\mathbb{C})$ are basic.
When we say that $E$ is a basic bundle, we mean that $E$ is equipped with a specified basic structure.\par
 Let $E$ be a basic bundle. 
 A $E$-valued differential form $\omega$ is called basic if for every $\alpha\in A$, $\omega|_{U_\alpha}\in A^{p}_B(U_\alpha)\otimes E$. This is well-defined since $E$ is basic. We denote the space of basic $E$-valued $p$-form as $A^{p}_B(E)$. Let $D$ be a connection of $E$. We call $D$ basic if for all $\alpha\in A$, $D|_{U_\alpha}=d+A_\alpha, A_\alpha\in A^1_B(\mathrm{End}E)$.
If $D$ is basic, we have a homomorphism $D: A^\ast_B(E)\to A_B^{\ast+1}(E)$. 
If $D$ is a flat connection, flat frames give a local trivialization of $E$ (See \cite{Ko}).
Since the transition functions of flat frames are locally constant, they define a basic structure on $E$.
 With respect to this basic structure, $D$ is a basic connection.\par

 Let $h$ be a Hermitian metric of $E$. Note that $h\in A(E^\vee\otimes\overline E^\vee)$. Here $E^\vee$ is the dual of $E$. We say the $h$ is basic if $h\in A_B(E^\vee\otimes\overline E^\vee)$. 
 Although Hermitian metrics always exist, basic Hermitian metrics might not exist. 
 However, it exists when $D$ is flat and semi-simple (See Section \ref{sec 5.3}).\par
 We now fix a basic bundle $E$, a basic connection $D$, and assume we have a basic Hermitian metric $h$. 
 As it is well-known $D$ has a decomposition
 \begin{equation}\label{decomp}
 D=\nabla_h+\sqrt{-1}\Phi
 \end{equation}
 such that $\nabla_h$ is a $h$-unitary connection and $\Phi$ is an $\mathrm{End}E$-valued skew-symmetric 1-form w.r.t. $h$. 
 Since $D$ and $h$ are basic, $\nabla_h$ and $\Phi$ are also. 
 We say the $(E,D)$ is \textit{irreducible} if there does not exist a basic sub-bundle $F$ of $E$ with $D(F)\subset A^1_B(F)$. 
 We say $(E,D)$ is \textit{reductive} if $(E,D)$ is a direct sum of irreducible ones. \par
 We define some notations. Let 
 \begin{align*}
 A(\mathfrak{u}(E)):&=\{f\in A(\mathrm{End}E):h(fu,v)+h(u,fv)=0\},\\
 A_r(\mathfrak{u}(E)):&=\{f\in  A(\mathfrak{u}(E)):\int_M\mathrm{tr}(f)=0\},\\
  A^i(\mathfrak{u}(E)):&=A^i\otimes A(\mathfrak{u}(E)),\\
 A^i_{r}(\mathfrak{u}(E)):&=A^i\otimes A_r(\mathfrak{u}(E)),\\
  A^i_B(\mathfrak{u}(E)):&=A^i_B\otimes A(\mathfrak{u}(E)),\\
 A^i_{B,r}(\mathfrak{u}(E)):&=A^i_B\otimes A_r(\mathfrak{u}(E)).
   \end{align*}
   We say $A_r(\mathfrak{u}(E))$ (resp. $A_{B,r}(\mathfrak{u}(E))$) as (basic) reduced section. We note that we have the following $L^2$-decomposition.
   \begin{align*}
    A(\mathfrak{u}(E))&= A_r(\mathfrak{u}(E))\oplus \sqrt{-1}\mathbb{R}\mathrm{Id}_E,\\
    A_B(\mathfrak{u}(E))&= A_{B,r}(\mathfrak{u}(E))\oplus \sqrt{-1}\mathbb{R}\mathrm{Id}_E.   
     \end{align*}
 Let $D$ be a basic connection. 
 It is well-known that $D$ admits a decomposition
 \begin{equation}\label{decomp}
 D=\nabla_h+\sqrt{-1}\Phi
 \end{equation}
 such that $\nabla_h$ is an $h$-unitary connection and $\Phi$ is an $\mathrm{End}E$-valued skew-symmetric 1-form w.r.t. $h$. 
 Since $D$ and $h$ are basic, $\nabla_h$ and $\Phi$ are also basic. 
We say that the pair $(\nabla_h,\Phi)$ is \textit{simple} if there does not exist a proper basic subbundle $F$ with $\nabla_h(F)\subset A^1_B(F),\Phi(F)\subset A^1_B(F)$.
We use the term "simple" to distinguish it from the notion of irreducibility for the connection $D=\nabla_h+\sqrt{-1}\Phi$ (see Section \ref{BaHit}).
\par
  The following result is used for the calculation of the dimension of the moduli space. 
 Although it follows from standard arguments in gauge theory, we give the proof for completeness.
  \begin{proposition}\label{irr} 
  The following are equivalent.
  \begin{itemize}
  \item $(\nabla_h,\Phi)$ is simple.
   \item We define a differential operator $D_1:A_B(\mathfrak{u}(E))\to A^1_B(\mathfrak{u}(E))\oplus A^1_B(\mathfrak{u}(E))$ as follows:
   \begin{equation*}
   D_1(f):=(\nabla_hf,[\Phi,f]).
   \end{equation*}  
   Then $\mathrm{Ker}(D_1)=\sqrt{-1}\mathbb{R}\mathrm{Id}_E$.
   \end{itemize}
   \end{proposition}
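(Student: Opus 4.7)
The plan is first to reformulate $\ker D_1$ in an invariant way. For $f\in A_B(\mathfrak{u}(E))$, the connection $D$ induces a covariant derivative on $\mathrm{End}(E)$ acting by $Df = \nabla_h f + \sqrt{-1}[\Phi,f]$. Because $\nabla_h$ is $h$-unitary, $\nabla_h f$ takes values in $\mathfrak{u}(E)$; because $\Phi$ and $f$ are both skew-Hermitian, $[\Phi,f]$ is skew-Hermitian, so $\sqrt{-1}[\Phi,f]$ takes values in the Hermitian piece $\sqrt{-1}\mathfrak{u}(E)$. These two components lie in complementary real subspaces of $\mathrm{End}(E)$, so $D_1(f)=0$ is equivalent to the single equation $Df=0$. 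In other words, $\ker D_1$ is precisely the space of basic, $D$-parallel, skew-Hermitian endomorphisms of $E$.

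For the direction \emph{$(E,D)$ irreducible $\Rightarrow\ \ker D_1 = \sqrt{-1}\mathbb{R}\,\mathrm{Id}_E$}, I would argue by spectral decomposition. Being $\nabla_h$-parallel, such an $f$ has locally constant, hence constant, characteristic polynomial on the connected $M$; skew-Hermiticity forces purely imaginary eigenvalues $\sqrt{-1}\lambda_1,\dots,\sqrt{-1}\lambda_k$. Each eigenbundle $E_{\lambda_j}$ is then a basic sub-bundle of constant rank, $\nabla_h$-invariant (by parallelism of $f$) and $\Phi$-invariant (from $[\Phi,f]=0$: if $fv=\sqrt{-1}\lambda v$ then $f(\Phi v)=\Phi(fv)=\sqrt{-1}\lambda\Phi v$). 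Hence each $E_{\lambda_j}$ is $D$-invariant; irreducibility forces $k=1$, i.e.\ $f=\sqrt{-1}\lambda\,\mathrm{Id}_E$.

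For the converse I proceed by contrapositive: given a nontrivial proper $D$-invariant basic sub-bundle $F\subset E$, I construct a non-scalar element of $\ker D_1$. The natural candidate is $f:=\sqrt{-1}\pi_F$, where $\pi_F$ is the $h$-orthogonal projection onto $F$; this $f$ is basic and skew-Hermitian. Since $\nabla_h$ is unitary and $\Phi$ is $h$-skew-Hermitian, each of $\nabla_h$ and $\Phi$ preserves $F$ if and only if it preserves $F^\perp$, and in that case $\nabla_h\pi_F=0$ and $[\Phi,\pi_F]=0$, giving $f\in\ker D_1\setminus\sqrt{-1}\mathbb{R}\,\mathrm{Id}_E$.

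The main obstacle is precisely this last reduction: $D$-invariance of $F$ a priori only yields the combined vanishing $\pi_{F^\perp}(\nabla_h s)+\sqrt{-1}\,\pi_{F^\perp}(\Phi s)=0$ for $s\in\Gamma(F)$, which is strictly weaker than separate invariance under $\nabla_h$ and under $\Phi$. Since this proposition will be applied when $(\nabla_h,\Phi)$ is a basic Hitchin pair, so that $(E,D,h)$ is a basic harmonic bundle, I expect the required ingredient to be the basic/Sasakian analog of the standard harmonic-bundle fact that any $D$-invariant sub-bundle admits an $h$-orthogonal $D$-invariant complement (a Simpson--Corlette type polystability result). With this input, $F^\perp$ is automatically $D$-invariant, which yields the desired separate $\nabla_h$- and $\Phi$-invariance of $F$ and completes the argument.
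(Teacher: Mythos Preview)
Your forward direction (irreducible $\Rightarrow\ \ker D_1=\sqrt{-1}\mathbb{R}\,\mathrm{Id}_E$) is exactly the paper's argument: $\nabla_h f=0$ gives a global eigenbundle decomposition, $[\Phi,f]=0$ forces $\Phi$ to preserve each eigenbundle, hence $D$ does, and irreducibility forces a single eigenvalue.

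For the converse, the paper does essentially what you propose. Starting from a proper $D$-invariant basic sub-bundle it asserts an $h$-orthogonal $D$-invariant splitting $(E,D)=(E_\alpha,D_\alpha)\oplus(E_\beta,D_\beta)$ and then exhibits $\sqrt{-1}(\mathrm{pr}_\alpha-\mathrm{pr}_\beta)$ as a non-scalar element of $\ker D_1$; your choice $\sqrt{-1}\pi_F$ would work equally well once that splitting is in hand.

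The obstacle you isolate is genuine, and the paper does not address it either: it simply asserts the $h$-orthogonal $D$-invariant decomposition without justification. Since $D=\nabla_h+\sqrt{-1}\Phi$ is not $h$-unitary, $D$-invariance of $F$ does not formally imply $D$-invariance of $F^\perp$, so for a completely general basic connection $D$ and basic metric $h$ the converse implication, as stated, is not proved. Your instinct is correct that the missing ingredient is a Corlette--Simpson type statement (available once $h$ is harmonic for $D$, in particular once $(\nabla_h,\Phi)$ is a basic Hitchin pair). In the paper the proposition is only ever invoked in the forward direction (e.g.\ to compute $\mathbb{H}^0$ and in the proof of Proposition~\ref{co 2nd}), so this gap does not affect the main results; but as a stand-alone equivalence for arbitrary basic $(D,h)$ the converse is incomplete in both your write-up and the paper's.
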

   \begin{proof}
  Assume $(\nabla_h,\Phi)$ is simple. 
Suppose we have a $f\in A_B(\mathfrak{u}(E))\backslash \sqrt{-1}\mathbb{R}\mathrm{Id}_E$ such that $D_1f=0$. By the definition of $D_1$, we have $\nabla_hf=0$. From \cite[p.25, Proposition 1.1.17]{LT}, we have the eigendecomposition of $E$ with respect to $f$:
   \begin{equation*}
   E=\bigoplus_\lambda E_\lambda.
   \end{equation*}
 Since $f$ is basic, each $E_\lambda$ is basic.
  The decomposition is $h$-othogonal and $\nabla_h(E_\lambda)\subset A^1_B(E_\lambda)$.
   Since each  $E_\lambda$ is eigen bundle of $f$ and $\Phi f-f\Phi=[\Phi,f]=0$, we have $\Phi(E_\lambda)\subset A^1_B(E_\lambda)$.  
   This contradicts the simplicity of $(\nabla_h,\Phi)$.\par
 Assume  $\mathrm{Ker}(D_1)=\sqrt{-1}\mathbb{R}\mathrm{Id}_E$. 
 Suppose $(\nabla_h,\Phi)$ is not simple. 
 Then there exists a sub-bundle $E_1\subset E$ such that $\nabla_h(E_1)\subset A^1_B(E_1), \Phi(E_1)\subset A^1_B(E_1)$.
 Let $E_2:=(E_1)^\perp$ be the orthogonal bundle of $E_1$.
 We have a $h$-orthogonal decomposition
 \[
 E=E_1\oplus E_2.
 \]
 Since $\nabla_h$ is a $h$-unitary connection, $\Phi$ is an $\mathrm{End}E$-valued skew-symmetric 1-form w.r.t. $h$, and $E_2$ is the othogonal bundle of $E_1$, $\nabla_h$ and $\Phi$ preserves $E_2$.\par
 Let $pr_1$ and $pr_2$ be the orthogonal projections to $E_1$ and $E_2$. 
 By definition, $\sqrt{-1}pr_1,\sqrt{-1}pr_2\in A_B(\mathfrak{u}(E))$. 
It is straightforward to check $\sqrt{-1}pr_1-\sqrt{-1}pr_2\in A_B(\mathfrak{u}(E))\backslash \sqrt{-1}\mathbb{R}\mathrm{Id}_E$ and
\begin{align*}
\nabla_h(\sqrt{-1}pr_1-\sqrt{-1}pr_2)=0, [\Phi,\sqrt{-1}pr_1-\sqrt{-1}pr_2]=0.
\end{align*} 
This contradicts  $\mathrm{Ker}(D_1)=\sqrt{-1}\mathbb{R}\mathrm{Id}_E$.
     \end{proof}
     
    \begin{remark}
    In \cite{BHa}, the authors defined that a basic metric connection $\nabla_h$ is irreducible if $\mathrm{Ker}(\nabla_h)|_{A_B(\mathfrak{u}(E))}=\sqrt{-1}\mathbb{R}\mathrm{Id}_E$.
     Proposition \ref{irr} tells us that the definition of our simplicity and their irreducibility coincide when $\Phi=0$.
    \end{remark}
    
\begin{remark}
Proposition \ref{irr} recovers \cite[Proposition 1.1.17 ]{LT} in the case $\Phi=0$,
and may be regarded as its extension to the setting with a Higgs field.
\end{remark}
    
    Let $A_B(GL(E))$ be the automorphism group of the basic bundle $E$. We define the gauge group
    \begin{equation*}
    \mathcal{G}_B:=\{ f\in A_B(GL(E)): h(fu,fv)=h(u,v)\}.
    \end{equation*}
    We moreover define the reduced gauge group as
    \begin{equation*}
    \mathcal{G}_{B,r}:=\mathcal{G}_B\big/S^1\mathrm{Id}_E.
        \end{equation*}
   The Lie algebra of $\mathcal{G}_B$ is $A(\mathfrak{u}(E))$ and $\mathcal{G}_{B,r}$ is $A_r(\mathfrak{u}(E))$.\par
   Let $\mathcal{A}_{h,B}$ be the space of basic $h$-unitary connections.
    This is an affine space that is modeled on $A^1_B(\mathfrak{u}(E))$.
    We define 
     \begin{equation*}
     \mathcal{A}_{B}:= \mathcal{A}_{h,B}\times A^1_B(\mathfrak{u}(E)).
               \end{equation*}
    Since any basic connection $D$ has the decomposition (\ref{decomp}), we regard $\mathcal{A}_{B}$ as the space of connections. $\mathcal{G}_B(E)$  acts on $\mathcal{A}^k_{B}$ as  
    \begin{equation}
    \begin{split}
    \mathcal{G}_B\times \mathcal{A}_{B}&\longrightarrow  \mathcal{A}_{B}\\
    (g,\nabla_h,\Phi)&\longmapsto (g^{-1}\nabla_h g,g^{-1}\Phi g).
   \end{split}
         \end{equation}

    \subsubsection{$L^2$-metric, Adjoints, and Brackets}
    In this section, we review some operations around $A^i_B(\mathfrak{u}(E))$. 
    The results in this section are nothing new. 
    However, we write this section to make the paper clear.\par
    Let $(E,h)$ be a basic vector bundle with a basic Hermitian metric on a Sasakian manifold $M$. Let $A,B\in A^i(\mathrm{End}E)$. Recall that the $L^2$-inner product $(A,B)_{L^2}$ is defined as 
    \begin{equation*}
    (A,B)_{L^2}=\int_M\mathrm{Tr}(A\wedge\ast B^\dagger_h).
    \end{equation*}
    Here recall that $B^\dagger_h$ is the formal adjoint of $B$ w.r.t. $h$ and $\ast$ is the ordinary Hodge star. Hence if we assume $B\in A^i(\mathfrak{u}(E))$, we have 
    \begin{equation*}
     (A,B)_{L^2}=\int_M\mathrm{Tr}(A\wedge\ast B^\dagger_h)=- \int_M\mathrm{Tr}(A\wedge\ast B).   
    \end{equation*}
    We study the $L^2$-metric restricted to $A^i_B(\mathfrak{u}(E))$. Let $\alpha\in A^i(M)$. The usual Hodge star $\ast$ and the basic Hodge star $\star_\xi$ have the following relation (\cite{KT}):
    \begin{equation*}
    \ast\alpha=\star_\xi\alpha\wedge\eta.
    \end{equation*} 
  Hence if $A,B\in A^i_B(\mathfrak{u}(E))$, we have 
  \begin{equation*}
  (A,B)_{L^2}=- \int_M\mathrm{Tr}(A\wedge\ast B)=- \int_M\mathrm{Tr}(A\wedge\star_\xi B)\wedge\eta.
      \end{equation*}
 Let $\nabla_h\in \mathcal{A}_{h,B}$ and $\Phi\in A^1_{B}(\mathfrak{u}(E))$. Let $\nabla^\ast_h$ and $\Phi^\ast$ be the formal adjoints of $\nabla_h$ and $\Phi$ w.r.t. the $L^2$-inner product i.e. for $A\in A^{i}(\mathrm{End}(E))$ and $B\in A^{i+1}(\mathrm{End}(E))$, the following holds
 \begin{align*}
 (\nabla_h A,B)_{L^2}&=(A,\nabla^\ast_h B)_{L^2},\\
 ([\Phi,A],B)_{L^2}&=(A,[\Phi^\ast,B])_{L^2}.
 \end{align*}
 We give the explicit formula of $\nabla^\ast_h$ and $\Phi^\ast$ when we restrict the $L^2$-inner product to $A^\ast_B(\mathfrak{u}(E))$.
Since the Sasakian manifold has no basic $2n+1$-form, for  $A\in A^{i}_B(\mathfrak{u}(E))$ and $B\in A^{i+1}_B(\mathfrak{u}(E))$, we have 
\begin{align*}
 (\nabla_h A,B)_{L^2}&=(A,\nabla^\ast_h B)_{L^2}=-(A,\star_\xi\nabla_h\star_\xi B)_{L^2},\\
 ([\Phi,A],B)_{L^2}&=(A,[\Phi^\ast,B])_{L^2}=(A,\star_\xi[\Phi^\dagger_h,\star_\xi B])_{L^2}=-(A,\star_\xi[\Phi,\star_\xi B])_{L^2}.
 \end{align*}
Hence we have 
\begin{lemma}\label{f-ad}
When we restrict the action of $\nabla_h$ and $\Phi$ to $A^\ast_B(\mathfrak{u}(E))$, those formal adjoints $\nabla^\ast_h,\Phi^\ast$ w.r.t. the $L^2$-inner product has the form
\begin{align*}
 \nabla^\ast_h&=-\star_\xi\nabla_h\star_\xi,\\
\Phi^\ast&=-\star_\xi\Phi\star_\xi.
 \end{align*}
 \end{lemma}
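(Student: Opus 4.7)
The plan is to derive both identities in parallel from Stokes' theorem, exploiting the fact that the compact Sasakian $(2n{+}1)$-fold $M$ carries no nonzero basic $(2n{+}1)$-form. First I would rewrite the $L^2$-pairing on $A^\ast_B(\mathfrak{u}(E))$ using the identity $\ast\alpha=\star_\xi\alpha\wedge\eta$ recorded just above the lemma, yielding $(A,B)_{L^2}=-\int_M\mathrm{Tr}(A\wedge\star_\xi B)\wedge\eta$ for $A,B$ of the same basic degree. For $A\in A^i_B(\mathfrak{u}(E))$ and $B\in A^{i+1}_B(\mathfrak{u}(E))$ this presents $(\nabla_h A,B)_{L^2}$ as $-\int_M\mathrm{Tr}(\nabla_h A\wedge\star_\xi B)\wedge\eta$.

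Next I would integrate by parts against the $2n$-form $\mathrm{Tr}(A\wedge\star_\xi B)\wedge\eta$. Its exterior derivative expands, via the graded Leibniz rule and the identity $\mathrm{Tr}\circ\nabla_h=d\circ\mathrm{Tr}$, into three pieces: the desired term $\mathrm{Tr}(\nabla_h A\wedge\star_\xi B)\wedge\eta$, the cross term $(-1)^i\mathrm{Tr}(A\wedge\nabla_h\star_\xi B)\wedge\eta$, and the contribution $-\mathrm{Tr}(A\wedge\star_\xi B)\wedge d\eta$. The last piece is a basic $(2n{+}1)$-form and hence vanishes identically. Stokes' theorem then kills the total derivative and gives
\begin{equation*}
(\nabla_h A,B)_{L^2}=(-1)^{i}\int_M\mathrm{Tr}\bigl(A\wedge\nabla_h\star_\xi B\bigr)\wedge\eta.
\end{equation*}
To recognize the right-hand side as $(A,-\star_\xi\nabla_h\star_\xi B)_{L^2}$ I would apply the pointwise identity $\star_\xi^{2}=(-1)^{k}\,\mathrm{Id}$ on $A^k_B(M)$, which is verified in an adapted orthonormal frame by the same argument used in the proof of Proposition \ref{cpx str}. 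Used with $k=2n-i$ on the basic form $\nabla_h\star_\xi B$, this reconciles the signs and yields $\nabla^\ast_h=-\star_\xi\nabla_h\star_\xi$.

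The argument for $\Phi^\ast$ follows the same template but requires no Stokes step: the graded cyclicity of the trace on $\mathrm{End}\,E$-valued forms, combined with $\Phi^{\dagger}_h=-\Phi$ (since $\Phi$ is $h$-skew-Hermitian), lets one push $\Phi$ past $A$ under the trace and rewrite $([\Phi,A],B)_{L^2}$ as $(-1)^i\int_M\mathrm{Tr}\bigl(A\wedge[\Phi,\star_\xi B]\bigr)\wedge\eta$. A second application of $\star_\xi^{2}=(-1)^{2n-i}$ identifies this with $(A,-\star_\xi[\Phi,\star_\xi B])_{L^2}$, which is the operator-theoretic content of the stated formula $\Phi^\ast=-\star_\xi\Phi\star_\xi$.

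The only delicate point in this plan is sign bookkeeping: the $(-1)^i$ coming from the graded Leibniz rule, the $(-1)^{|\alpha||\beta|}$ from graded cyclicity of the trace, and the degree-dependent $(-1)^k$ from $\star_\xi^2$ all interact, and the argument succeeds precisely because they combine to a single overall factor that matches on both sides of each pairing. Once this is verified uniformly in the basic degree $i$, both adjoint formulas drop out as immediate consequences of Stokes' theorem and the vanishing of basic top-degree forms on $M$.
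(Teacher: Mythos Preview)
Your proposal is correct and follows essentially the same route as the paper. The paper's own argument is the two displayed lines immediately preceding the lemma together with the remark ``this can be shown by a standard calculation,'' the key input being exactly your observation that the Sasakian manifold carries no nonzero basic $(2n{+}1)$-form; you have simply spelled out the Stokes step, the vanishing of the $d\eta$ contribution, and the $\star_\xi^2=(-1)^k$ sign bookkeeping that the paper leaves implicit.
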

We state one more result, which we use later. From now on, we assume $\mathrm{dim}M=3$.
\begin{lemma}\label{bra}
Let $A,B\in A^1_B(\mathrm{End}E)$. Then
\begin{equation*}
[\star_\xi A,B]=-[A,\star_\xi B]
\end{equation*}
holds.
\end{lemma}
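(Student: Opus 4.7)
The plan is to reduce the $\mathrm{End}E$-valued identity to a pointwise identity for scalar basic $1$-forms. Writing locally $A = \sum_i \alpha_i \otimes a_i$ and $B = \sum_j \beta_j \otimes b_j$ with scalar basic $1$-forms $\alpha_i,\beta_j$ and endomorphisms $a_i,b_j$, the graded bracket of $\mathrm{End}E$-valued $1$-forms becomes
\begin{equation*}
[A,B] = \sum_{i,j} (\alpha_i \wedge \beta_j) \otimes [a_i, b_j]_{\mathrm{End}E},
\end{equation*}
and $\star_\xi$ acts only on the form factor. So the claimed identity $[\star_\xi A, B] = -[A, \star_\xi B]$ follows at once from the scalar-form identity
\begin{equation*}
\star_\xi\alpha \wedge \beta + \alpha \wedge \star_\xi \beta = 0, \qquad \alpha,\beta \in A^1_B(M).
\end{equation*}

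To establish the scalar identity, I would argue pointwise using exactly the local setup from the proof of Proposition \ref{cpx str}. Fix $p \in M$ and a local coordinate $(U,x,y,z)$ so that $S_p = \mathbb{R}(\partial_x)_p \oplus \mathbb{R}(\partial_y)_p$ with $(\partial_x)_p, (\partial_y)_p, \xi_p$ pairwise $g_\eta$-orthogonal. Then $A^1_B(M)_p$ is spanned by $(dx)_p, (dy)_p$, and that proof already supplies $\star_\xi(dx)_p = (dy)_p$ and $\star_\xi(dy)_p = -(dx)_p$. By bilinearity it suffices to verify the identity on the four pairs $(dx,dx), (dx,dy), (dy,dx), (dy,dy)$ at $p$; the mixed pairs vanish on both sides (since $dx \wedge dx = dy \wedge dy = 0$), and the diagonal pairs give opposite-sign multiples of $(dx \wedge dy)_p$ that cancel.

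There is no real obstacle here; the only thing to keep track of is the sign coming from the graded Leibniz rule for the bracket of $\mathrm{End}E$-valued $1$-forms (which gives $[A,B] = A \wedge B + B \wedge A$ with composition of endomorphisms, so the form part really is the symmetric wedge $\alpha \wedge \beta + \beta \wedge \alpha$ paired with $[a,b]_{\mathrm{End}E}$ — either way the $\star_\xi$ decouples). Since the scalar identity is antisymmetric in $(\alpha,\beta)$ and the bracket $[a,b]_{\mathrm{End}E}$ is antisymmetric in $(a,b)$, the two signs combine consistently to give the stated equality.
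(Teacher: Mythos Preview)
Your argument is correct and follows essentially the same route as the paper: a pointwise verification in the local coordinates of Proposition~\ref{cpx str}, using $\star_\xi(dx)_p=(dy)_p$ and $\star_\xi(dy)_p=-(dx)_p$. The only difference is cosmetic---you factor through the scalar identity $\star_\xi\alpha\wedge\beta+\alpha\wedge\star_\xi\beta=0$ before reintroducing the endomorphism coefficients, whereas the paper carries the components $A_x,A_y,B_x,B_y$ through the computation directly; your final paragraph is muddled (the form part of $[\alpha\otimes a,\beta\otimes b]$ is $\alpha\wedge\beta$, not the symmetric wedge) but also unnecessary, since your first two paragraphs already constitute a complete proof.
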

\begin{proof}
We only have to prove it pointwise. Let $p\in M$. We use the coordinate which we used in Proposition \ref{cpx str}. 
\begin{align*}
[\star_\xi A,B]_p&=[\star_\xi(A_x(dx)_p+A_y(dy)_p),B_x(dx)_p+B_y(dy)_p]\\
&=[A_x(dy)_p-A_y(dx)_p,B_x(dx)_p+B_y(dy)_p]\\
&=-\bigg([A_x,B_x]+[A_y,B_y]\bigg)(dx)_p\wedge(dy)_p.
\end{align*}
\begin{align*}
[A,\star_\xi B]_p&=[A_x(dx)_p+A_y(dy)_p,\star_\xi(B_x(dx)_p+B_y(dy)_p)]\\
&=[A_x(dx)_p+A_y(dy)_p,B_x(dy)_p-B_y(dx)_p]\\
&=\bigg([A_x,B_x]+[A_y,B_y]\bigg)(dx)_p\wedge(dy)_p.
\end{align*}
Hence, the Lemma is proved.
\end{proof}

\subsubsection{Regular Bundles}\label{reg bun}
In this section, we recall the notion of regular bundles on Sasakian manifolds based on \cite{BH2}.
We now assume the Sasakian manifold $(M,(T^{1,0},S,I),(\eta,\xi))$ is regular.
 Let
 \begin{equation*}
 \varphi:\mathbb{R}\times M \to M, (t,x)\mapsto \varphi_t(x)
 \end{equation*}
 be the flow generated by the Reeb vector field. \par
 Let $E$ be a basic vector bundle of rank $r$. Then by \cite{BH2}, we can define a natural action
 \begin{equation*}
 \Phi:\mathbb{R}\times E \to E, (t,e)\mapsto \Phi_t(e)
 \end{equation*}
 such that they are compatible with the natural projection $p_E:E\to M$ (i.e. $p_E\circ\Phi_t=\varphi_t$). 
 Since $M$ is regular, the flow $\varphi:\mathbb{R}\times M \to M$ induces a free smooth action $\psi:S^1\times M\to M$.
  This is equivalent to the existence of a positive number $r\in\mathbb{R}$ such that $\varphi_r(x)=1$ for all $x\in M$. 
  The minimum of all such $r_{Min}$ is called the period of $M$. 
  We assume  $r_{Min}=1$ for simplicity. 
  \begin{definition}
 We say that $E$ is \textit{quasiregular} if $\Phi:\mathbb{R}\times E\to E$ induces a $S^1$-action
 $\Psi:S^1\times E\to E$. 
 This is equivalent to the existence of a positive integer $m$ such that $\Phi_m=\mathrm{Id}_E$. 
 We say that $E$ is \textit{regular} if it is quasi-regular and $m=1$.
  \end{definition}

  \section{The Moduli space of Basic Hitchin Equations}
 Throughout this section, we assume $(M,(T^{1,0},S,I),(\eta,\xi))$ to be a compact Sasakian manifold of dimension three. 
 We also fix a basic bundle $E$ and a basic metric $h$.

 \subsection{Basic Hitchin Equation}\label{BaHit}
Recall that we defined  $\mathcal{A}_{h,B}$ to be the set of basic $h$-unitary connections and $A^1_B(\mathfrak{u}(E))$ be $\mathrm{End}E$-valued skew-symmetric 1-forms (See section \ref{basic}) with respect to $h$.
Note that $ \mathcal{A}_{h,B}$ is an affine space modeled on $A^1_B(\mathfrak{u}(E))$.
 \par
 Let $(\nabla_h,\Phi)\in  \mathcal{A}_{B}=\mathcal{A}_{h,B}\times A^1_B(\mathfrak{u}(E))$. We say that $(\nabla_h,\Phi)$ satisfies the \textit{basic Hitchin equation} if 
 \begin{equation}\label{BHeq}
 \left\{
 \begin{split}
 F_{\nabla_h}-\Phi\wedge\Phi&=0,\\
\nabla_h\Phi&=0,\\
\nabla_h\star_\xi\Phi&=0.
 \end{split}
 \right. 
  \end{equation}
  Here $F_{\nabla_h}$ is the curvature of $\nabla_h$.
  If $(\nabla_h,\Phi)$ satisfies the Hitchin equation we call $(\nabla_h,\Phi)$ a \textit{basic Hitchin pair}. 
  We set as
  \begin{equation*}
  \mathcal{A}_{\mathrm{BaHit}}:=\{ (\nabla_h,\Phi)\in\mathcal{A}_{h,B}\times A^1_B(\mathfrak{u}(E)): (\nabla_h,\Phi) \textrm{ is a basic Hitchin pair}\}.
    \end{equation*}
    Let $D$ be a basic connection.
  We say that $(E, D)$ is \textit{irreducible} if there does not exist a proper basic subbundle $E_1\subset E$ such that $D(E_1)\subset A^1_B(E_1)$.
  When the underlying bundle $E$ is fixed, we simply say that $D$ is irreducible.\par
   We set
    \begin{equation*}
 \mathcal{A}^{\mathrm{irr}}_{\mathrm{BaHit}}:=\{ (\nabla_h,\Phi)\in  \mathcal{A}_{\mathrm{BaHit}}: D=\nabla_h+\sqrt{-1}\Phi \textrm{ is irreducible}\}.
    \end{equation*}  
 Recall that in Section \ref{basic}, we defined a notion \textit{simple} for a pair $(\nabla_h,\Phi)\in  \mathcal{A}_{B}=\mathcal{A}_{h,B}\times A^1_B(\mathfrak{u}(E))$.
 We show that simplicity of the pair  $(\nabla_h,\Phi)\in  \mathcal{A}_{B}=\mathcal{A}_{h,B}\times A^1_B(\mathfrak{u}(E))$
 is equivalent to the irreducibility of $D$ if $(\nabla_h,\Phi)\in \mathcal{A}_{\mathrm{BaHit}}$ by the non-Abelian Hodge correspondence of Sasakian manifolds (See Section \ref{sec 5}).
 We also note that if $(\nabla_h,\Phi)\in \mathcal{A}_{\mathrm{BaHit}}$, then $(\nabla^{0,1}_h,\sqrt{-1}\Phi^{1,0})$ is a polystable basic Higgs bundle (See Section \ref{sec 5}).
 
  \begin{lemma}\label{lem 4.1}
 Let $(\nabla_h,\Phi)\in \mathcal{A}_{\mathrm{BaHit}}$.
 Then the following are equivalent.
 \begin{itemize}
 \item[1.] $(\nabla_h,\Phi)$ is simple.
 \item[2.] $(E, \nabla^{0,1}_h,\sqrt{-1}\Phi)$ is stable.
 \item[3.] $(E,D=\nabla_h+\sqrt{-1}\Phi)$ is irreducible.
   \end{itemize}
  \end{lemma}
\begin{proof}
$3.\to 1.$ Suppose $(\nabla_h,\Phi)$ is not simple. 
Then there exists a basic subbundle $E_1\subset E$ such that $\nabla_h(E_1),\Phi(E_1)\subset A^1_B(E_1)$.
This implies $D(E_1)\subset A^1_B(E_1)$ which contradicts the irreducibility of $D$.\par
$1.\to 2.$ By the result of \cite{BH, BH2,WZ}, $(\nabla^{0,1}_h,\sqrt{-1}\Phi)$ is polystable.
Assume that it is not stable.
Let $V\subset E$ be a basic sub-Higgs bundle such that it is stable and has the same slope as $E$ (See Section \ref{sec 5}).
Then by \cite[Proposition 3.3]{S1}, $E=V\oplus V^\perp$ is an orthogonal direct sum of basic sub-Higgs bundles.
Since the decomposition is orthogonal, $\nabla_h(V),\Phi(V)\subset A^1_B(V)$.
This contradicts the simplicity of $(\nabla_h,\Phi)$.\par
$2.\to 3.$ Suppose $(E,D=\nabla+\sqrt{-1}\Phi)$ is not irreducible.
Then we have a basic sub-bundle $D(V)\subset A^1_B(V)$.
Since $D$ is flat and a connection of $V$, the degree of $V$ is 0 and this contradicts the stability of $(E,\nabla^{0,1}_h,\sqrt{-1}\Phi)$.
\end{proof}
   
     Note that the action of the gauge groups $\mathcal{G}_B$ and $\mathcal{G}_{B,r}$ preserves $\mathcal{A}_{\mathrm{BaHit}}$ and $\mathcal{A}^{\mathrm{irr}}_{\mathrm{BaHit}}$. Moreover, $\mathcal{G}_{B,r}$ acts freely on $\mathcal{A}^{\mathrm{irr}}_{\mathrm{BaHit}}$.\par
 Let $(\nabla_h,\Phi)\in  \mathcal{A}_{\mathrm{BaHit}}$.
  Considering the linearization of the action of the gauge group $\mathcal{G}_B$ and the linearization of the basic Hitchin equation (\ref{BHeq}), we obtain a complex
 \begin{equation}\label{cpx}
 \begin{split}
 0\longrightarrow A_B(\mathfrak{u}(E))\overset{D_1}{\longrightarrow} A^1_B(\mathfrak{u}(E))^{\oplus 2}\overset{D_2}{\longrightarrow} A^2_B(\mathfrak{u}(E))^{\oplus 3}\longrightarrow 0
    \end{split}
 \end{equation}
 where
 \begin{equation}\label{cpx 1}
 \begin{split}
D_1A&:=(\nabla_hA,[\Phi,A]),\\
D_2(A,B)&:=(\nabla_hA-[\Phi,B],\nabla_hB+[A,\Phi],\nabla_h\star_\xi B+[A,\star_\xi\Phi]).
    \end{split}
 \end{equation}
 \par
 Note that $D_1$ is exactly the same operator we introduced in Proposition \ref{irr}. Considering the highest-order part of the differential operators $D_1$ and $D_2$, we see that the complex (\ref{cpx}) is \textit{transverse elliptic complex} (See \cite{Wa}). We denote the $i$-th cohomology of the complex (\ref{cpx}) as $\mathbb{H}^i$. These cohomology are finite dimensions since they are the kernel of transverse elliptic operators \cite{EKA}. The dimension of $\mathbb{H}^1$ is expected to be the dimension of the moduli space. \par
 We now consider the case $(\nabla_h,\Phi)\in \mathcal{A}^{\mathrm{irr}}_{\mathrm{BaHit}}$. In this case, $\mathrm{Ker}D_1=\sqrt{-1}\mathbb{R}\mathrm{Id}_E$ (See Proposition \ref{irr} and Lemma \ref{lem 4.1}) and hence $\mathrm{dim}_{\mathbb{R}}\mathbb{H}^0=1$. We later use the following result to show the moduli space is smooth and to calculate the dimension of the moduli space.
\begin{proposition}\label{co 2nd} 
Assume $(\nabla_h,\Phi)\in \mathcal{A}^{\mathrm{irr}}_{\mathrm{BaHit}}$. Then
$\mathrm{dim}_{\mathbb{R}}\mathbb{H}^2=3.$ In particular each row of $\mathbb{H}^2$ is spanned by the multiplication of $\sqrt{-1}d\eta$ and $\mathrm{Id}_E$ i.e. 
\begin{equation*}
\mathbb{H}^2=[\langle \sqrt{-1}d\eta\mathrm{Id}_E\rangle^{\oplus 3}_{\mathbb{R}}].
\end{equation*}
Here 
\begin{align*}
&\langle \sqrt{-1}d\eta\mathrm{Id}_E\rangle^{\oplus 3}_{\mathbb{R}}:=
\mathbb{R}
\begin{pmatrix}
\sqrt{-1}d\eta\mathrm{Id}_E\\
0\\
0
\end{pmatrix}
+
\mathbb{R}
\begin{pmatrix}
0\\
\sqrt{-1}d\eta\mathrm{Id}_E\\
0
\end{pmatrix}
+
\mathbb{R}
\begin{pmatrix}
0\\
0\\
\sqrt{-1}d\eta\mathrm{Id}_E
\end{pmatrix},
\end{align*}
and $[\langle \sqrt{-1}d\eta\mathrm{Id}_E\rangle^{\oplus 3}_{\mathbb{R}}]$ 
is the $\mathbb{R}$-vector space which is spanned by the cohomology class of the basis of $\langle \sqrt{-1}d\eta\mathrm{Id}_E\rangle^{\oplus 3}_{\mathbb{R}}$.
\end{proposition}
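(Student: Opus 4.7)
The plan is to identify $\mathbb{H}^2$ with $\ker D_2^*$ via transverse elliptic Hodge theory (\cite{EKA, BKR}), exhibit the three claimed generators as harmonic representatives, and then use a Weitzenb\"ock argument to show nothing else arises.

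First I would compute $D_2^*$ using Lemma \ref{f-ad} ($\nabla_h^* = -\star_\xi\nabla_h\star_\xi$ and $\mathrm{ad}(\Phi)^* = -\star_\xi\mathrm{ad}(\Phi)\star_\xi$), together with the fact that $\star_\xi$ is an $L^2$-isometry and hence anti-self-adjoint on $A^1_B(\mathfrak{u}(E))$ (since $\star_\xi^2 = -\mathrm{Id}$ by Proposition \ref{cpx str}), and Lemma \ref{bra} for the third component of $D_2$. Introducing the rescaled variables $X' := \star_\xi X$, $Y' := \star_\xi Y$, $Z' := \star_\xi Z$ in $A^0_B(\mathfrak{u}(E))$, the condition $D_2^*(X,Y,Z)=0$ becomes a coupled first-order system relating $\nabla_h X'$, $\nabla_h Y'$, $\nabla_h Z'$ to the brackets $[\Phi,X']$, $[\Phi,Y']$, $[\Phi,Z']$ (with a single $\star_\xi$ appearing in the $Z'$-terms). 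Direct substitution then shows that each of $(\sqrt{-1}d\eta\,\mathrm{Id}_E,0,0)$, $(0,\sqrt{-1}d\eta\,\mathrm{Id}_E,0)$, $(0,0,\sqrt{-1}d\eta\,\mathrm{Id}_E)$ satisfies the system, using $\nabla_h\mathrm{Id}_E = 0$, $[\Phi,\mathrm{Id}_E]=0$, and the fact that $\star_\xi d\eta$ is a positive constant on a compact Sasaki three-fold. Being harmonic, they descend to linearly independent cohomology classes, so $\dim_\mathbb{R}\mathbb{H}^2 \geq 3$.

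For the upper bound I would apply $\nabla_h^*$ to the equation for $\nabla_h X'$ and then substitute $\nabla_h Y'$ from the other component of the system. Using $\nabla_h\Phi = 0$ and $\nabla_h\star_\xi\Phi = 0$ to pull $\nabla_h$ past $\Phi$ and $\star_\xi\Phi$, the Jacobi identity with the auxiliary cancellation $[\Phi,\star_\xi\Phi]=0$ (a pointwise computation in the frame of Proposition \ref{cpx str}), and Lemma \ref{bra} to commute $\star_\xi$ past brackets, the $Z'$-contributions cancel in pairs and the calculation collapses to $\nabla_h^*\nabla_h X' + \mathrm{ad}(\Phi)^*\mathrm{ad}(\Phi)(X') = 0$, i.e.\ $D_1^*D_1(X')=0$. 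Pairing with $X'$ in $L^2$ forces $D_1 X' = 0$. The same argument applied to the equation for $\nabla_h Y'$ gives $D_1 Y' = 0$. Once $X',Y'\in\sqrt{-1}\mathbb{R}\,\mathrm{Id}_E$ by Proposition \ref{irr}, the original system simplifies at once to $\star_\xi[\Phi,Z']=0$ and $\star_\xi\nabla_h Z'=0$, so $Z'\in\ker D_1$ as well. Back-translating through $X = \star_\xi X'$ and $\star_\xi(\sqrt{-1}\mathrm{Id}_E) = (\star_\xi d\eta)^{-1}\sqrt{-1}d\eta\,\mathrm{Id}_E$ identifies the harmonic representative of any class in $\mathbb{H}^2$ as a real linear combination of the three generators.

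The main obstacle is the Weitzenb\"ock cancellation in the third paragraph: the $Z'$-terms produced by differentiating the $[\Phi,Y']$ piece via substitution match those from the $\star_\xi[\Phi,Z']$ piece only because both $\nabla_h\Phi = 0$ and $\nabla_h\star_\xi\Phi = 0$ are available, and the nested commutator $[\Phi,[\star_\xi\Phi,X']]$ reduces via Jacobi only after invoking the algebraic identity $[\Phi,\star_\xi\Phi]=0$. Without all three Hitchin equations together with this Sasakian identity, one recovers only $\nabla_h^*\nabla_h X' = \mathrm{ad}(\Phi)^*\mathrm{ad}(\Phi)(X')$, which is insufficient to conclude $D_1 X' = 0$.
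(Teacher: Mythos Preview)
Your overall strategy coincides with the paper's: identify $\mathbb{H}^2$ with $\ker D_2^*$, pass to $0$-forms via $\star_\xi$, and run an $L^2$ argument forcing each component into $\ker D_1$. The paper's computation for the first variable is exactly your Weitzenb\"ock step, organized as a direct evaluation of $\|\nabla_h\alpha\|^2_{L^2}$: substitute from the first system equation, use $\nabla_h\Phi=\nabla_h\star_\xi\Phi=0$ to pass $\nabla_h$ through, then substitute the combination $-\star_\xi\nabla_h\beta+\nabla_h\gamma=[\Phi,\alpha]$ from the second equation, arriving at $\|\nabla_h\alpha\|^2=-\|[\Phi,\alpha]\|^2$. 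Note that neither the Jacobi identity nor $[\Phi,\star_\xi\Phi]=0$ enters here; the cancellation of the auxiliary variables is purely via Lemma~\ref{bra} and the two system equations, so your account of the ``main obstacle'' overcomplicates this step.

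The genuine soft spot in your proposal is the phrase ``the same argument applied to the equation for $\nabla_h Y'$ gives $D_1Y'=0$.'' The system is not symmetric in the three variables, and once $X'\in\ker D_1$ the remaining equations give $\star_\xi\nabla_h Y'=\nabla_h Z'$ and $[\star_\xi\Phi,Y']=-[\Phi,Z']$. Running the analogous $L^2$ computation now forces you through $\nabla_h\nabla_h$ and hence the \emph{first} Hitchin equation $F_{\nabla_h}=\Phi\wedge\Phi$, which you did not invoke for $X'$. The paper makes this explicit by computing $\|\nabla_h\gamma\|^2$ (not $\|\nabla_h\beta\|^2$), using $\star_\xi\nabla_h\nabla_h\beta=\star_\xi F_{\nabla_h}\beta=\star_\xi[\Phi,[\Phi,\beta]]$ and then the second reduced equation, to obtain $\|\nabla_h\gamma\|^2=-\|[\star_\xi\Phi,\beta]\|^2$; the remaining vanishings follow. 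So your sketch is on the right track but the second step is not a repetition of the first---it is precisely where the curvature equation is spent.
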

\begin{proof} 
It is enough to show  
\begin{equation*}
\mathrm{Ker}D^\ast_2=
\langle\sqrt{-1}d\eta\mathrm{Id}_E\rangle^{\oplus 3}_{\mathbb{R}}
\end{equation*}
Let $(A,B,C)\in A^2_B(\mathfrak{u}(E))^{\oplus 3}$. By direct calculation, we have 
\begin{equation*}
D^\ast_2(A,B,C)=(\nabla^\ast_hA+[(\star_\xi\Phi)^\ast,B]+[\Phi^\ast, C],-[\Phi^\ast,A]-\star_\xi\nabla_h^\ast B+\nabla_h^\ast C).
\end{equation*}
Here $\nabla^\ast_h$ is the formal adjoint of $\nabla_h$ w.r.t. $L^2$-inner product. $\Phi^\ast$, $(\star_\xi\Phi)^\ast$ are also. \par

Hence $D^\ast_2(A,B,C)=0$ is equivalent to 
\begin{equation}\label{ad0}
\left\{
\begin{split}
\nabla^\ast_hA+[(\star_\xi\Phi)^\ast,B]+[\Phi^\ast, C]&=0,\\
-[\Phi^\ast,A]-\star_\xi\nabla_h^\ast B+\nabla_h^\ast C&=0.
\end{split}
\right.
\end{equation}
Recall that from Lemma \ref{f-ad}, we have the explicit formula of $\nabla^\ast_h$, $\Phi^\ast$, and $(\Phi^{1,0})^\ast$:\begin{align*}
\nabla^\ast_h&=-\star_\xi\nabla_h\star_\xi,\\
(\Phi)^\ast&= \star_\xi(\Phi)_h^\dagger\star_\xi=-\star_\xi\Phi\star_\xi,\\
(\star_\xi\Phi)^\ast&=\star_\xi(\star_\xi\Phi)_h^\dagger\star_\xi=-\star_\xi(\star_\xi\Phi)\star_\xi.
\end{align*}
\par
The operator $\star_\xi$ induces an isomorphism
\begin{equation*}
\star_\xi:A^2_B(\mathfrak{u}(E))\to A_B(\mathfrak{u}(E)).
\end{equation*} 
Hence to consider  the pair  $(A,B,C)\in A^2_B(\mathfrak{u}(E))^{\oplus 3}$ which satisfies the equation (\ref{ad0}) is equivalent to consider the pair
 $(\alpha,\beta,\gamma)\in A_B(\mathfrak{u}(E))^{\oplus 3}$ which satisfies the following equations
\begin{equation}\label{ad1}
\left\{
\begin{split}
\nabla_h\alpha+[\star_\xi\Phi,\beta]+[\Phi,\gamma]&=0,\\
[\Phi,\alpha]+\star_\xi\nabla_h \beta-\nabla_h \gamma&=0.
\end{split}
\right.
\end{equation}
Let $(,)_{L^2}$ be the $L^2$-inner product. Assume $(\alpha,\beta,\gamma)\in A_B(\mathfrak{u}(E))^{\oplus 3}$ satisfies the equation (\ref{ad1}). Then we have 
\begin{align*}
\|\nabla_h\alpha\|^2_{L^2}&=(\nabla_h\alpha,\nabla_h\alpha)_{L^2}\\
&=(-\star_\xi\nabla_h\star_\xi\nabla_h\alpha,\alpha)_{L^2}\\
&=(\star_\xi\nabla_h\star_\xi[\star_\xi\Phi,\beta]+\star_\xi\nabla_h\star_\xi[\Phi,\gamma],\alpha)\\
&=(-\star_\xi\nabla_h[\Phi,\beta]+\star_\xi\nabla_h[\star_\xi\Phi,\gamma],\alpha)_{L^2} \qquad (\because \:\mathrm{Lemma}\: \ref{bra}.)\\
&=(\star_\xi[\Phi,\nabla_h\beta]-\star_\xi[\star_\xi\Phi,\nabla_h\gamma],\alpha)_{L^2}\\
&=(\star_\xi[\Phi,\nabla_h\beta]+\star_\xi[\Phi,\star_\xi\nabla_h\gamma],\alpha)_{L^2}\\
&=(\star_\xi[\Phi,\star_\xi(-\star_\xi\nabla_h\beta+\nabla_h\gamma)],\alpha)_{L^2}\\
&=(\star_\xi[\Phi,\star_\xi[\Phi,\alpha]],\alpha)_{L^2}\\
&=-((\Phi)^\ast[\Phi,\alpha],\alpha)_{L^2}\\
&=-([\Phi,\alpha],[\Phi,\alpha])_{L^2}\\
&=-\|[\Phi,\alpha]\|^2_{L^2}.
\end{align*}
Hence we obtain $\nabla_h\alpha=[\Phi,\alpha]=0$. This is equivalent to $\alpha\in \mathrm{Ker}D_1$. Since $(\nabla_h,\Phi)\in \mathcal{A}^{\mathrm{irr}}_{\mathrm{BaHit}}$, $\alpha=\sqrt{-1}a\mathrm{Id}_E$ for some $a\in\mathbb{R}$. Then $\beta$ and $\gamma$ satisfies 
\begin{equation}\label{ad2}
\left\{
\begin{split}
[\star_\xi\Phi,\beta]+[\Phi,\gamma]&=0,\\
\star_\xi\nabla_h \beta-\nabla_h \gamma&=0.
\end{split}
\right.
\end{equation}
We first calculate $\|\nabla_h\gamma\|^2_{L^2}$.
\begin{equation*}
\begin{split}
\|\nabla_h\gamma\|^2_{L^2}&=(\nabla_h\gamma,\nabla_h\gamma)_{L^2}\\
&=-(\star_\xi\nabla_h\star_\xi\nabla_h\gamma,\gamma)_{L^2}\\
&=-(\star_\xi\nabla_h\star_\xi\star_\xi\nabla_h \beta,\gamma)_{L^2}\\
&=(\star_\xi\nabla_h\nabla_h \beta,\gamma)_{L^2}\\
&=(\star_\xi F_{\nabla_h}\beta,\gamma)_{L^2}\\
&=(\star_\xi[\Phi,[\Phi,\beta]],\gamma)_{L^2}\\
&=-(\star_\xi[\Phi,\star_\xi\star_\xi[\Phi,\beta]],\gamma)_{L^2}\\
&=((\Phi)^\ast\star_\xi[\Phi,\beta],\gamma)_{L^2}\\
&=([\star_\xi\Phi,\beta],[\Phi,\gamma])_{L^2}\\
&=-([\star_\xi\Phi,\beta],[\star_\xi\Phi,\beta])_{L^2}\\
&=-\|[\star_\xi\Phi,\beta]\|^2_{L^2}.
\end{split}
\end{equation*}
Hence we obtain $\nabla_h\gamma=[\star_\xi\Phi,\beta]=0$. Since $\beta$ and $\gamma$ satisfies the equation (\ref{ad2}), we also obtain $\star_\xi\nabla_h\beta=[\Phi,\gamma]=0$. Since $\star_\xi$ is an isomorphism, $\nabla_h\Phi=[\Phi,\beta]=0$. Hence $\beta,\gamma\in\mathrm{Ker}D_1$, and therefore $\beta=\sqrt{-1}b\mathrm{Id}_E$ and $\gamma=\sqrt{-1}c\mathrm{Id}_E$ for some $b,c\in\mathbb{R}$.
\par
Let $(A,B,C)\in\mathrm{Ker}D^\ast_2$. Then $(\alpha,\beta,\gamma):=(\star_\xi A,\star_\xi B,\star_\xi C)$ satisfies the equation (\ref{ad1}). By the discussion above, $(\alpha,\beta,\gamma)=(\sqrt{-1}a\mathrm{Id}_E,\sqrt{-1}b\mathrm{Id}_E,\sqrt{-1}c\mathrm{Id}_E)$ for some $a,b,c\in \mathbb{R}$. Since we have $\star_\xi 1=d\eta$, $A,B,C\in\langle\sqrt{-1}d\eta\mathrm{Id}_E\rangle_{\mathbb{R}}.$ Hence $\mathrm{ker}D^\ast_2\subset\langle \sqrt{-1}d\eta\mathrm{Id}_E\rangle^{\oplus 3}_{\mathbb{R}}$.
\par
Since $\star_\xi d\eta=1$, $\langle \sqrt{-1}d\eta\mathrm{Id}_E\rangle^{\oplus 3}_{\mathbb{R}}\subset \mathrm{ker}D^\ast_2$. Hence we have
\begin{equation*}
\mathrm{ker}D^\ast_2=\langle \sqrt{-1}d\eta\mathrm{Id}_E\rangle^{\oplus 3}_{\mathbb{R}}.
\end{equation*}
\end{proof}  \par
We now construct the moduli space of the irreducible basic Hitchin pair. To construct the moduli space, we introduce $\|\cdot\|_{k,2}$ the $L^2_k$-Sobolev norm. Let $L^2_k(A^1_B(\mathfrak{u}(E)))$ to be the completion of $A^1_B(\mathfrak{u}(E))$ with respect to the $L^2_k$-norm. 
We denote as $\mathcal{A}^{k}_{h,B}$ to be the space of basic metric $L^2_k$- connection.
We set 
\begin{equation*}
\mathcal{A}^k_{B}:=\mathcal{A}^{k}_{h,B}\times L^2_k(A^1_B(\mathfrak{u}(E))).
\end{equation*}  
 We may regard $\mathcal{A}^k_{B}$ as the space of basic $L^2_k$-connection.
Let   $\mathcal{G}^{k}_B$ to be the $L^2_k$-basic gauge group and $\mathcal{G}^{k}_{r,B}:=\mathcal{G}^{k}_B/S^1\mathrm{Id_E}$ to be the reduced $L^2_k$-basic gauge group. We take $k$ large enough so that the basic Sobolev embedding holds \cite{BHa, KLW}. Then one can show as in \cite{DK}, that $\mathcal{G}^{k}_B$ and $\mathcal{G}^{k}_{r,B}$ are Hilbert Lie groups. By basic Sobolev multiplication \cite{BHa, KLW}, $\mathcal{G}^{k+1}_B$ and $\mathcal{G}^{k+1}_{r,B}$ acts smoothly on $\mathcal{A}^k_{B}$ and we can show that $\mathcal{B}^k:=\mathcal{A}^k_{B}/\mathcal{G}^{k+1}_B$ and $\mathcal{B}^k_r:=\mathcal{A}^k_{B}/\mathcal{G}^{k+1}_{r,B}$ are  Hausdorff spaces in the quotient topology. Let $\mathcal{A}^k_{\mathrm{BaHit}}\subset \mathcal{A}^k_{B}$ be the space of $L^2_k$-basic Hitchin pair. We define the moduli space of $L^2_k$-basic Hitchin equation $\mathcal{M}^k_{\mathrm{BaHit}}$ as 
\begin{equation*}
\mathcal{M}^k_{\mathrm{BaHit}}:=\mathcal{A}^k_{\mathrm{BaHit}}/\mathcal{G}^{k+1}_{r,B}.
\end{equation*}
Since  $\mathcal{M}^k_{\mathrm{BaHit}}\subset \mathcal{B}^k_r$, $\mathcal{M}^k_{\mathrm{BaHit}}$ is a Hausdorff space. We define $\mathcal{A}^{k,\mathrm
{irr}}_{B}\subset \mathcal{A}^k_{B}$ to be the space irreducible basic $L^2_k$-connection and $\mathcal{A}^{k,\mathrm{irr}}_{\mathrm{BaHit}}:=\mathcal{A}^k_{\mathrm{BaHit}}\cap\mathcal{A}^{k,\mathrm{irr}}_{B}$ to be the space of irreducible basic $L^2_k$-Hitchin pairs. Note that $\mathcal{G}^{k+1}_{r,B}$ acts freely on $\mathcal{A}^{k,\mathrm
{irr}}_{B}$ and $\mathcal{A}^{k,\mathrm{irr}}_{\mathrm{BaHit}}$. We define $\mathcal{B}^{k,\mathrm{irr}}_r:=\mathcal{A}^{k,\mathrm{irr}}_B/ \mathcal{G}^{k+1}_{r,B}$. We finally define the moduli of irreducible $L^2_k$-basic Hitchin pairs as 
   \begin{equation*}
    \mathcal{M}^{k,\mathrm{irr}}_{\mathrm{BaHit}}:=\mathcal{A}^{k,\mathrm{irr}}_{\mathrm{BaHit}}/\mathcal{G}^{k+1}_{r,B}.
        \end{equation*}
  Since $\mathcal{B}^{k,\mathrm{irr}}_r\subset\mathcal{B}^{k}_r$ and $\mathcal{M}^{k,\mathrm{irr}}_{\mathrm{BaHit}}\subset \mathcal{M}^{k}_{\mathrm{BaHit}}$, they are Hausdorff spaces. The topology of $\mathcal{M}^{k,\mathrm{irr}}_{\mathrm{BaHit}}$ do depend on $k$. However, we can apply the argument in \cite{DK, LT} and show the following.
  \begin{proposition} Assume that $k$ is large enough. Then the natural map $\mathcal{M}^{k+1,\mathrm{irr}}_{\mathrm{BaHit}}\to\mathcal{M}^{k,\mathrm{irr}}_{\mathrm{BaHit}}$ is a homeomorphism.
 \end{proposition}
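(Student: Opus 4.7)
The plan is to adapt the classical argument of Donaldson--Kronheimer \cite{DK} and L\"ubke--Teleman \cite{LT} to the basic setting, using the basic Sobolev theory \cite{BHa, KLW}, the transverse Hodge theory \cite{EKA}, and the transverse ellipticity of the deformation complex (\ref{cpx}). The forward map is automatically well defined---a $\mathcal{G}^{k+2}_{r,B}$-equivalence is \emph{a fortiori} a $\mathcal{G}^{k+1}_{r,B}$-equivalence---and continuous, because the inclusion $L^2_{k+1}\hookrightarrow L^2_k$ is continuous on $\mathcal{A}_B$ and on the gauge group, and the quotient topology is functorial.

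For surjectivity and the construction of a continuous local inverse I would use a basic Coulomb slice argument. Let $[(\nabla_h,\Phi)]\in \mathcal{M}^{k,\mathrm{irr}}_{\mathrm{BaHit}}$, fix a smooth reference pair $(\nabla_h^0,\Phi^0)$ nearby in the $L^2_k$ topology, and consider
\[
F\colon g\longmapsto D_1^{\ast}\bigl(g^{-1}\nabla_h g-\nabla_h^0,\; g^{-1}\Phi g-\Phi^0\bigr),
\]
with $D_1^{\ast}$ formed at $(\nabla_h^0,\Phi^0)$. Its linearization at $g=\mathrm{Id}_E$ is essentially $D_1^{\ast}D_1$ acting on basic reduced sections of $\mathfrak{u}(E)$, a second-order transverse elliptic operator. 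Proposition \ref{irr} together with irreducibility removes the kernel modulo $\sqrt{-1}\mathbb{R}\mathrm{Id}_E$, so on the reduced Lie algebra this operator is a Banach-space isomorphism between the appropriate basic Sobolev spaces by \cite{EKA}. The implicit function theorem then yields an $L^2_{k+1}$-basic reduced gauge transformation placing $(\nabla_h,\Phi)$ in Coulomb gauge relative to $(\nabla_h^0,\Phi^0)$. The Coulomb condition combined with the basic Hitchin equations (\ref{BHeq}) is a transverse elliptic system for the gauged pair; iterated basic elliptic regularity lifts it from $L^2_k$ to $L^2_{k+1}$, producing the continuous local inverse.

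For injectivity, suppose $(\nabla_h,\Phi), (\nabla_h',\Phi')\in \mathcal{A}^{k+1,\mathrm{irr}}_{\mathrm{BaHit}}$ are related by some $g\in\mathcal{G}^{k+1}_{r,B}$. The gauge relation expresses $\nabla_h g$ algebraically in terms of $L^2_{k+1}$ data (namely $g$, $\nabla_h$, $\nabla_h'$), so basic elliptic regularity for the first-order basic operator $\nabla_h$ lifts $g$ from $L^2_{k+1}$ to $L^2_{k+2}$. The hard part is setting up the basic Coulomb slice theorem with a nontrivial Higgs field: the purely unitary version appears in \cite{BHa, KLW} and the Hitchin version over Riemann surfaces in \cite{DK, LT}, and the work here consists in fusing the two, which the transverse ellipticity of (\ref{cpx}) and Proposition \ref{irr} make routine. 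Once the slice theorem is in place, the remainder of the proof is a direct transcription of the classical arguments.
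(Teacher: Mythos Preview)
Your outline is correct and is precisely the classical Donaldson--Kronheimer / L\"ubke--Teleman argument that the paper itself invokes by reference; the paper gives no further details beyond citing \cite{DK, LT}, so your write-up is in fact more explicit than the original. The only cosmetic point is that in the injectivity step the bootstrap for $g$ is really the algebraic identity $dg = gA' - Ag$ combined with basic Sobolev multiplication rather than elliptic regularity for $\nabla_h$ per se, but this does not affect the validity of the argument.
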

Since we have this Proposition, we omit the subscription $k$ from now.\par
  We now turn our interest to the local structure of the moduli space. Let $[(\nabla_h,\Phi)]\in \mathcal{B}^{\mathrm{irr}}_r$. We define a slice 
  \begin{equation}
  S_{(\nabla_h,\Phi),\epsilon}:=\{\alpha\in A^1_B(\mathfrak{u}(E))^{\oplus 2}:\|\alpha\|_{L^2_k}<\epsilon, D^\ast_1\alpha=0\}.
  \end{equation}
  We can apply the argument of \cite{DK,LT,Pa} and  show that $S_{(\nabla_h,\Phi),\epsilon}$ gives a coordinate patch for $\mathcal{B}^{irr}_r$. \par
  From now on, we assume $[(\nabla_h,\Phi)]\in  \mathcal{M}^{\mathrm{irr}}_{\mathrm{BaHit}}$. 
  We show that $ \mathcal{M}^{\mathrm{irr}}_{\mathrm{BaHit}}\cap S_{(\nabla_h,\Phi),\epsilon}$ is diffeomorphic to the neighborhood of $\mathbb{H}^1$. 
  Before we proceed, we prepare some notations.
   We set $\Delta_{i,(\nabla_h,\Phi)}:=D_{i}D^\ast_{i}+D^\ast_{i+1}D_{i+1} (i=0,1,2)$ to be the Laplacians. 
  We set as $D_{-1}=D_3=0$.
   Let $G_{(\nabla_h,\Phi)}$ be the Green operators and $H_{(\nabla_h,\Phi)}$ be the Harmonic projections. 
   We denote as $\Delta_i,G,H$ if there is no confusion.  \par
  Let $\alpha=(A,B)\in S_{(\nabla_h,\Phi),\epsilon}$. Then $\alpha\in  \mathcal{M}^{\mathrm{irr}}_{\mathrm{BaHit}}$ if and only if 
  \begin{equation}\label{kur 0}
  \begin{split}
  D_2\alpha+
   \begin{pmatrix}
  A\wedge A-B\wedge B\\
  [A,B]\\
  [A,\star_\xi B]
  \end{pmatrix}=
    D_2(A,B)+
  \begin{pmatrix}
  A\wedge A-B\wedge B\\
  [A,B]\\
  [A,\star_\xi B]
  \end{pmatrix}=0.
  \end{split}
  \end{equation}
  This can be checked by direct computation. 
  To simplify the notation, we set
  \begin{equation*}
\widetilde{\alpha\wedge\alpha}:=
   \begin{pmatrix}
  A\wedge A-B\wedge B\\
  [A,B]\\
  [A,\star_\xi B]
  \end{pmatrix}.  
  \end{equation*}
 Note that $\widetilde{\alpha\wedge\alpha}$ is not an ordinary wedge product.\par
  Hence we have 
  \begin{equation*}
 \mathcal{M}^{\mathrm{irr}}_{\mathrm{BaHit}}\cap S_{(\nabla_h,\Phi),\epsilon}=\{\alpha\in S_{(\nabla_h,\Phi),\epsilon}:  D_2\alpha+\alpha\wedge\alpha=0\}.
    \end{equation*}
  By the Hodge decomposition, the equation (\ref{kur 0}) is equivalent to  
  \begin{equation}\label{kur 1}
  \left\{
  \begin{split}
 & D_2\alpha+D_2D^\ast_2G(\widetilde{\alpha\wedge\alpha})=0,\\
  & H(\widetilde{\alpha\wedge\alpha})=0.
  \end{split}
  \right.
    \end{equation}
    We define the \textit{Kuranishi map} $k_{(\nabla_h,\Phi)}:A^1_B(\mathfrak{u}(E))^{\oplus2}\to A^1_B(\mathfrak{u}(E))^{\oplus2}$ as 
    \begin{equation}\label{kur-map}
    k_{(\nabla_h,\Phi)}(\alpha)=\alpha+D^\ast_2G(\alpha\wedge\alpha).
  \end{equation}
  Let $\alpha\in  \mathcal{M}^{\mathrm{irr}}_{\mathrm{BaHit}}\cap S_{(\nabla_h,\Phi),\epsilon}$. Then by (\ref{kur 1}),
  \begin{align*}
  D^\ast_1(k_{(\nabla_h,\Phi)}(\alpha))&=D^\ast_1\alpha +D^\ast_1D^\ast_2G(\alpha\wedge\alpha)=0,\\
  D_2(k_{(\nabla_h,\Phi)}(\alpha))&=D_2\alpha+D_2D^\ast_2G(\alpha\wedge\alpha)=0.
    \end{align*}
    Hence 
 \begin{equation*}
 k_{(\nabla_h,\Phi)}(\mathcal{M}^{\mathrm{irr}}_{\mathrm{BaHit}}\cap S_{(\nabla_h,\Phi),\epsilon})\subset \mathbb{H}^1.
  \end{equation*} 
  The next proposition shows that $\mathcal{M}^{\mathrm{irr}}_{\mathrm{BaHit}}$ is smooth.
 \begin{proposition}
 Let $U$ be a neighborhood of the origin of  $\mathbb{H}^1$. If we take a $U$ small enough, then there exists a $\epsilon$ such that $k_{(\nabla_h,\Phi)}$ induces a homeomorphism 
 \begin{equation*}
 k_{(\nabla_h,\Phi)}:\mathcal{M}^{\mathrm{irr}}_{\mathrm{BaHit}}\cap S_{(\nabla_h,\Phi),\epsilon}\to U.
   \end{equation*}
 \end{proposition}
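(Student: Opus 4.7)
The plan is the standard Kuranishi argument adapted to the basic Hitchin setting: use the Banach inverse function theorem to make $k_{(\nabla_h,\Phi)}$ a local homeomorphism on the Sobolev completion, show that it carries the slice into $\ker(D^\ast_1)$ and intertwines $\mathcal{M}^{\mathrm{irr}}_{\mathrm{BaHit}}\cap S_{(\nabla_h,\Phi),\epsilon}$ with a neighborhood of $0$ in $\mathbb{H}^1$, and finally use the explicit description of $\mathbb{H}^2$ in Proposition \ref{co 2nd} to check that the harmonic obstruction vanishes automatically.

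First I would verify that $k_{(\nabla_h,\Phi)}$ is a smooth map on the $L^2_k$-completion of $A^1_B(\mathfrak{u}(E))^{\oplus 2}$: the quadratic piece $\widetilde{\alpha\wedge\alpha}$ is $L^2_k$-continuous by the basic Sobolev multiplication \cite{BHa, KLW} for $k$ large, and $D^\ast_2 G$ is a bounded linear operator since $G$ is the Green operator for the transverse elliptic Laplacian $\Delta_2$. The derivative of $k_{(\nabla_h,\Phi)}$ at the origin is the identity, so the Banach inverse function theorem gives a local diffeomorphism on an $L^2_k$-neighborhood of $0$. Since (\ref{cpx}) is a complex one has $D^\ast_1 D^\ast_2=(D_2 D_1)^\ast=0$, so $D^\ast_1 k_{(\nabla_h,\Phi)}(\alpha)=D^\ast_1\alpha$ identically; this implies that $k_{(\nabla_h,\Phi)}$ carries $S_{(\nabla_h,\Phi),\epsilon}$ into $\ker(D^\ast_1)$ and restricts to a homeomorphism between small neighborhoods of $0$ in $S_{(\nabla_h,\Phi),\epsilon}$ and in $\ker(D^\ast_1)$. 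Since $\mathbb{H}^1\subset\ker(D^\ast_1)$, shrinking $U$ as needed we can guarantee $U\subset k_{(\nabla_h,\Phi)}(S_{(\nabla_h,\Phi),\epsilon})$.

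The main obstacle is the identification $k_{(\nabla_h,\Phi)}^{-1}(U)\cap S_{(\nabla_h,\Phi),\epsilon}=\mathcal{M}^{\mathrm{irr}}_{\mathrm{BaHit}}\cap S_{(\nabla_h,\Phi),\epsilon}$. The inclusion $\supseteq$ is established in the excerpt. For $\subseteq$, suppose $\alpha\in S_{(\nabla_h,\Phi),\epsilon}$ satisfies $k_{(\nabla_h,\Phi)}(\alpha)\in\mathbb{H}^1$; then $D_2 k_{(\nabla_h,\Phi)}(\alpha)=0$ gives $D_2\alpha+D_2 D^\ast_2 G(\widetilde{\alpha\wedge\alpha})=0$, and combining with the Hodge identity $\widetilde{\alpha\wedge\alpha}=H(\widetilde{\alpha\wedge\alpha})+D_2 D^\ast_2 G(\widetilde{\alpha\wedge\alpha})$ yields
\begin{equation*}
D_2\alpha+\widetilde{\alpha\wedge\alpha}=H(\widetilde{\alpha\wedge\alpha}).
\end{equation*}
By Proposition \ref{co 2nd} this harmonic obstruction lies in $\langle\sqrt{-1}d\eta\mathrm{Id}_E\rangle^{\oplus 3}_{\mathbb{R}}$, so its vanishing reduces to checking that the three $L^2$-pairings of $\widetilde{\alpha\wedge\alpha}$ with $\sqrt{-1}d\eta\mathrm{Id}_E$ in each slot are zero. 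Using $\star_\xi(d\eta)=1$, these are constant multiples of $\int_M\mathrm{Tr}(A\wedge A-B\wedge B)\wedge\eta$, $\int_M\mathrm{Tr}[A,B]\wedge\eta$, and $\int_M\mathrm{Tr}[A,\star_\xi B]\wedge\eta$, all of which vanish because the trace of a graded commutator of endomorphism-valued forms is identically zero. Hence $\alpha$ satisfies (\ref{BHeq}); irreducibility is an open condition around $(\nabla_h,\Phi)$ in $L^2_k$-norm, so it persists for $\epsilon$ small and $\alpha\in\mathcal{M}^{\mathrm{irr}}_{\mathrm{BaHit}}$. Combining the two inclusions, $k_{(\nabla_h,\Phi)}$ induces the claimed homeomorphism.
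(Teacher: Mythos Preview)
Your proof is correct and follows essentially the same route as the paper: the standard Kuranishi inverse-function-theorem argument, with the obstruction $H(\widetilde{\alpha\wedge\alpha})$ killed using Proposition~\ref{co 2nd} together with the tracelessness of the entries of $\widetilde{\alpha\wedge\alpha}$. The paper phrases this last step as $\widetilde{\alpha\wedge\alpha}\in A^2_B(\mathfrak{su}(E))^{\oplus 3}$ (so its harmonic projection is traceless, forcing $a=b=c=0$), whereas you compute the $L^2$-pairing with $\sqrt{-1}\,d\eta\,\mathrm{Id}_E$ directly, and the paper also inserts an explicit transverse-elliptic regularity step ($\Delta_1\alpha+D^\ast_2(\widetilde{\alpha\wedge\alpha})=0$) to conclude smoothness of $\alpha$; otherwise the arguments coincide.
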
 
 \begin{proof}
The proof is quite standard (See \cite{Ko}). 
The point of this proposition is that we do not need any assumption to show $\mathcal{M}^{\mathrm{irr}}_{\mathrm{BaHit}}$ is smooth. \par
Let $L^2_k(A^1_B(\mathfrak{u}(E)))$ be the completion of $A^1_B(\mathfrak{u}(E))$ with respect to the $L^2_k$-norm. 
We extend the Kuranishi map to
\begin{equation*}
k_{(\nabla_h,\Phi)}:L^2_k(A^1_B(\mathfrak{u}(E)))^{\oplus2}\to L^2_k(A^1_B(\mathfrak{u}(E)))^{\oplus2}.
\end{equation*}
Since the derivative of the Kuranishi map at the origin is the identity, we can apply the inverse function theorem of Banach spaces and show that there exist neighborhoods of the origin  $V_1$ and $V_2$ such that $k_{(\nabla_h,\Phi)}$ induces a homeomorphism
\begin{equation*}
k_{(\nabla_h,\Phi)}:V_1\to V_2.
\end{equation*}
Let $\beta\in V_2\cap\mathbb{H}^1$. Let $\alpha:=k^{-1}(\beta)$. We show that $\alpha\in V_1\cap\mathrm{Ker}D^\ast_1\cap\mathcal{M}^{k,\mathrm{irr}}_{\mathrm{BaHit}}.$ Once this is shown, shrink $V_1$ and we prove the proposition.\par
 First, from the definition of $\alpha$, we have 
\begin{equation*}
\beta=\alpha+D^\ast_2G(\widetilde{\alpha\wedge\alpha}).
\end{equation*}
Act the Laplacian $\Delta_1$ and we have 
\begin{align*}
0=\Delta_1\beta&=\Delta_1\alpha+D^\ast_2\Delta_2G(\widetilde{\alpha\wedge\alpha})\\
&=\Delta_1\alpha+D^\ast_2\Delta_2G(\widetilde{\alpha\wedge\alpha})\\
&=\Delta_1\alpha+D^\ast_2(\widetilde{\alpha\wedge\alpha})-D^\ast_2H(\widetilde{\alpha\wedge\alpha})\\
&=\Delta_1\alpha+D^\ast_2(\widetilde{\alpha\wedge\alpha}).
\end{align*}
Hence, by the transverse elliptic regularity, $\alpha$ is smooth. We also have
\begin{align*}
0&=D_2\beta=D_2\alpha+D_2D^\ast_2G(\widetilde{\alpha\wedge\alpha}),\\
0&=D^\ast_1\beta=D^\ast_1\alpha.
\end{align*}
We now showed that $\alpha\in V_1\cap\mathrm{Ker}D^\ast_1$. To show $\alpha\in\mathcal{M}^{\mathrm{irr}}_{\mathrm{BaHit}}$, we need to show $H(\alpha\wedge\alpha)=0$ (See (\ref{kur 1})). To show this, we use Proposition  \ref{co 2nd}. Recall that 
 \begin{equation*}
\widetilde{\alpha\wedge\alpha}=
   \begin{pmatrix}
  A\wedge A-B\wedge B\\
  [A,B]\\
  [A,\star_\xi B]
  \end{pmatrix}.  
  \end{equation*}
 From Proposition \ref{co 2nd}, there exists $a,b,c\in\mathbb{R}$ such that 
  \begin{align*}
  H
  \begin{pmatrix}
   A\wedge A-B\wedge B\\
  [A,B]\\
  [A,\star_\xi B]
    \end{pmatrix}
    &=
    \sqrt{-1}
 \begin{pmatrix}
a \\
b\\
c\\
 \end{pmatrix}d\eta\mathrm{Id}_E.\\
  \end{align*}  
 We would like to show $a=b=c=0$. First, let 
 \begin{equation*}
A^i_B(\mathfrak{su}(E)):=\{f\in A^i_B(\mathfrak{u}(E)): \mathrm{Tr}(f)=0\}.
\end{equation*}
Then the complex 
 \begin{equation*}
  \begin{split}
 0\longrightarrow A_B(\mathfrak{su}(E))\overset{D_1}{\longrightarrow} A^1_B(\mathfrak{su}(E))^{\oplus 2}\overset{D_2}{\longrightarrow} A^2_B(\mathfrak{su}(E))^{\oplus 3}\longrightarrow 0
    \end{split}
 \end{equation*} 
 forms a sub complex of (\ref{cpx}). Since 
 \begin{equation*}
\begin{pmatrix}
   A\wedge A-B\wedge B\\
  [A,B]\\
  [A,\star_\xi B]
    \end{pmatrix}
    \in A^2_B(\mathfrak{su}(E))^{\oplus 3},
    \end{equation*} 
  we have 
\begin{equation*}
  H
\begin{pmatrix}
   A\wedge A-B\wedge B\\
  [A,B]\\
  [A,\star_\xi B]
    \end{pmatrix}
    \in \mathbb{H}^2\cap A^2_B(\mathfrak{su}(E))^{\oplus 3}.
    \end{equation*}
Hence $\mathrm{Tr}(a\cdot d\eta\mathrm{Id}_E)=\mathrm{Tr}(b\cdot d\eta\mathrm{Id}_E)=\mathrm{Tr}(c\cdot d\eta\mathrm{Id}_E)$=0. We obtain $a=b=c=0$.
 \end{proof}
 In particular, we have the following
 \begin{corollary}\label{m-sm}
$\mathcal{M}^{\mathrm{irr}}_{\mathrm{BaHit}}$ is an empty set or a smooth manifold. 
If not empty,  the dimension of $\mathcal{M}^{\mathrm{irr}}_{\mathrm{BaHit}}$ around $[(\nabla_h,\Phi)]\in \mathcal{M}^{\mathrm{irr}}_{\mathrm{BaHit}}$ is $\mathbb{H}^1$.
 \end{corollary}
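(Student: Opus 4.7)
The proof is essentially a packaging of the preceding proposition into a global manifold statement, so the work has largely been done. The plan is to upgrade the pointwise Kuranishi model into a smooth atlas.

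First, I would note that by the slice theorem argument of \cite{DK,LT,Pa} already invoked in the excerpt, for each $[(\nabla_h,\Phi)] \in \mathcal{M}^{\mathrm{irr}}_{\mathrm{BaHit}}$ the slice $S_{(\nabla_h,\Phi),\epsilon}$ provides a local model for $\mathcal{B}^{\mathrm{irr}}_r$ at $[(\nabla_h,\Phi)]$ as an open subset of the affine Hilbert space $\mathrm{Ker}\,D^\ast_1 \subset L^2_k(A^1_B(\mathfrak{u}(E)))^{\oplus 2}$. Combined with the homeomorphism $k_{(\nabla_h,\Phi)}: \mathcal{M}^{\mathrm{irr}}_{\mathrm{BaHit}} \cap S_{(\nabla_h,\Phi),\epsilon} \to U \subset \mathbb{H}^1$ of the preceding proposition, this gives a chart around each point of $\mathcal{M}^{\mathrm{irr}}_{\mathrm{BaHit}}$ valued in the finite-dimensional space $\mathbb{H}^1$, which is finite-dimensional by the transverse ellipticity of the complex (\ref{cpx}) combined with \cite{EKA}.

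Next I would verify that these charts form a smooth atlas. Given two points $[(\nabla_h^{(1)},\Phi^{(1)})]$ and $[(\nabla_h^{(2)},\Phi^{(2)})]$ whose chart neighborhoods overlap, the transition map between their Kuranishi charts factors as the composition of (i) the inverse Kuranishi map $k_1^{-1}: U_1 \to S_1$, (ii) a gauge transformation relating the two slices (which is smooth by smoothness of the gauge group action on $\mathcal{A}^k_B$), and (iii) the Kuranishi map $k_2: S_2 \to U_2$. Each factor is smooth: $k_i$ is smooth because it is built from $D^\ast_2 G$ acting on the smooth quadratic map $\alpha \mapsto \widetilde{\alpha \wedge \alpha}$, and its local inverse is smooth by the Banach inverse function theorem used in the preceding proof. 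The Hausdorff property is inherited from $\mathcal{B}^{\mathrm{irr}}_r$, which was already established.

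Since the construction is pointwise and the chart neighborhoods cover $\mathcal{M}^{\mathrm{irr}}_{\mathrm{BaHit}}$, we obtain a smooth manifold structure with $\dim_{\mathbb{R}} T_{[(\nabla_h,\Phi)]} \mathcal{M}^{\mathrm{irr}}_{\mathrm{BaHit}} = \dim_{\mathbb{R}} \mathbb{H}^1$ at each point. No additional obstruction-theoretic argument is required here, precisely because the preceding proposition showed (using Proposition \ref{co 2nd}) that the harmonic projection $H(\widetilde{\alpha \wedge \alpha})$ vanishes automatically once one is inside the slice of an irreducible basic Hitchin pair; this is the reason the moduli space is unobstructed and genuinely smooth rather than just a real-analytic variety.

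The only point that could potentially require care is ensuring that the manifold dimension $\dim \mathbb{H}^1$ is locally constant on $\mathcal{M}^{\mathrm{irr}}_{\mathrm{BaHit}}$. This follows from the fact that the dimensions of $\mathbb{H}^0$ and $\mathbb{H}^2$ are locally constant (equal to $1$ and $3$ respectively by Proposition \ref{irr} and Proposition \ref{co 2nd}) while the index of the transverse elliptic complex (\ref{cpx}) is a deformation invariant, so $\dim \mathbb{H}^1$ is locally constant as well. This is the step I would expect to receive the least attention in a first draft but which is essential for the statement of smoothness with a well-defined dimension; it is not a serious obstacle but deserves an explicit mention.
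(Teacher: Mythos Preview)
Your proposal is correct and follows the same approach as the paper: the corollary is stated immediately after the preceding proposition with no further proof, so the paper treats the passage from the local Kuranishi homeomorphism to a global smooth manifold structure as routine. Your write-up simply makes explicit the standard atlas and transition-map argument that the paper leaves implicit, and your remark on the local constancy of $\dim\mathbb{H}^1$ (via the constancy of $\dim\mathbb{H}^0$, $\dim\mathbb{H}^2$ and invariance of the transverse index) is a valid point that the paper does not spell out here but effectively uses later in Section~\ref{dim}.
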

We give a sufficient condition for  $\mathcal{M}^{\mathrm{irr}}_{\mathrm{BaHit}}$ not to be empty. 
Recall that $T^{1,0}$ is the CR structure on $M$. 
If $c_{1,B}(T^{1,0})=-C[d\eta]$ for some positive constant $C$, then there exists a basic stable Higgs bundle due to \cite[Example 3.6]{BH2}. 
Hence if $c_{1,B}(T^{1,0})=-C[d\eta], C>0$, then $\mathcal{M}^{\mathrm{irr}}_{\mathrm{BaHit}}$ is not empty (See Section \ref{Hi-BH}).
 \subsection{Riemannian Structure on $\mathcal{M}^{\mathrm{irr}}_{\mathrm{BaHit}}$}
We use the same notation of the previous section.
 We assume that $\mathcal{M}^{\mathrm{irr}}_{\mathrm{BaHit}}$ is not an empty set. \par
We show that the moduli space $\mathcal{M}^{\mathrm{irr}}_{\mathrm{BaHit}}$ of irreducible basic Hitchin pair on a compact Sasakian threefold $M$ is a hyperK\"ahler manifold. 
We first define a Riemannian metric $g$ on $\mathcal{M}^{
\mathrm{irr}}_{\mathrm{BaHit}}$. Let $[(\nabla_h,\Phi)]\in \mathcal{M}^{\mathrm{irr}}_{\mathrm{BaHit}}$ and $\alpha=(\alpha_1,\alpha_2), \beta=(\beta_1,\beta_2)\in \mathbb{H}^1\simeq T_{[(\nabla_h,\Phi)]}\mathcal{M}^{\mathrm{irr}}_{\mathrm{BaHit}}$. We define $g$ as 
\begin{equation}\label{metric}
g_{[(\nabla_h,\Phi)]}(\alpha,\beta):=-\int_M\mathrm{Tr}(\alpha_1\wedge\star_\xi \beta_1+\alpha_2\wedge\star_\xi \beta_2)\wedge\eta.
\end{equation} 
To show $g$ is well-defined, we need to check that $g$ does not depend on the gauge-equivalence class of $[(\nabla_h,\Phi)]\in\mathcal{M}^{irr}_{\mathrm{BaHit}}$. Under a gauge transformation $(\nabla_h,\Phi)\to h^{-1}(\nabla_h,\Phi)h$, the infinitesimal deformations $\alpha,\beta$ maps to $h^{-1}\alpha h, h^{-1}\beta h$ which are the corresponding harmonic repsentative (See \cite{It} for details.). Since (\ref{metric}), the metric $g$ is equivalent to the gauge transformation. Hence $g$ is well-defined.\par
We now prove the distinguished coordinate of the moduli $\mathcal{M}^{\mathrm{irr}}_{\mathrm{BaHit}}$ induced by the Kuranishi map and the slice is a normal coordinate with respect to $(\mathcal{M}^{\mathrm{irr}}_{\mathrm{BaHit}},g)$. 
This result will be used later to show that $\mathcal{M}^{\mathrm{irr}}_{\mathrm{BaHit}}$ is hyperK\"ahler.\par
Let $[(\nabla_h,\Phi)]\in \mathcal{M}^{\mathrm{irr}}_{\mathrm{BaHit}}$. Then from the previous section we have the Kuranishi map $k_{(\nabla_h,\Phi)}$, Slice $S_{(\nabla_h,\Phi),\epsilon}$, and a open subset  $0\in U\subset \mathbb{H}^1$ such that 
\begin{equation*}
k_{(\nabla_h,\Phi)}:\mathcal{M}^{\mathrm{irr}}_{\mathrm{BaHit}}\cap S_{(\nabla_h,\Phi),\epsilon}\to U
\end{equation*}
is a homeomorphism. The derivative of the Kuranishi map at $\alpha\in A^1_{B}(\mathfrak{u}(E))^{\oplus 2}$ as follows
\begin{equation}\label{der-kur}
\begin{split}
&d(k_{(\nabla_h,\Phi)})_\alpha:T_\alpha A^1_{B}(\mathfrak{u}(E))^{\oplus 2}\to T_{k_{(\nabla_h,\Phi)}(\alpha)} A^1_{B}(\mathfrak{u}(E))^{\oplus 2},\\
&d(k_{(\nabla_h,\Phi)})_\alpha(\beta)=\beta+D^\ast_2G(\widetilde{[\alpha,\beta]}).
\end{split}
\end{equation}
Here for $\alpha=(\alpha_1,\alpha_2),\beta=(\beta_1,\beta_2)\in A^1_{B}(\mathfrak{u}(E))^{\oplus 2}$ we defined $\widetilde{[\alpha,\beta]}$ as 
\begin{equation}
\widetilde{[\alpha,\beta]}:=
\begin{pmatrix}
[\alpha_1,\beta_1]-[\alpha_2,\beta_2]\\
[\alpha_1,\beta_2]+[\beta_1,\alpha_2]\\
[\alpha_1,\star_\xi \beta_2]+[\beta_1,\star_\xi\alpha_2]
\end{pmatrix}.
\end{equation}
Note that $\widetilde{[\alpha,\beta]}$ is not the ordinary bracket. We call this bracket as the \textit{modified bracket}.\par
Using the modified bracket, we can characterize the tangent space of $\alpha\in \mathcal{M}^{\mathrm{irr}}_{\mathrm{BaHit}}\cap S_{(\nabla_h,\Phi),\epsilon}$ as follows
\begin{equation}\label{tan-mod}
T_\alpha (\mathcal{M}^{\mathrm{irr}}_{\mathrm{BaHit}}\cap S_{(\nabla_h,\Phi),\epsilon})=\{\beta\in A^1_{B}(\mathfrak{u}(E))^{\oplus 2}: D^\ast_1\beta=0, D_2\beta+\widetilde{[\alpha,\beta]}=D_{2,\alpha}\beta=0\}.
\end{equation}
Here $D_{2,\alpha}$ is the operator of (\ref{cpx 1}) defined for $(\nabla_h,\Phi)+\alpha=(\nabla_h+\alpha_1,\Phi+\alpha_2)\in \mathcal{A}^{\mathrm{irr}}_{\mathrm{BaHit}}$. From (\ref{der-kur}) and (\ref{tan-mod}), the restriction of $dk_{(\nabla_h,\Phi)}$ to $T_\alpha(\mathcal{M}^{\mathrm{irr}}_{\mathrm{BaHit}}\cap S_{(\nabla_h,\Phi),\epsilon})$ has the following form.
\begin{proposition}\label{der-kur2}
The differential of the Kuranishi map 
\begin{equation*}
d(k_{(\nabla_h,\Phi)})_\alpha:T_\alpha (\mathcal{M}^{\mathrm{irr}}_{\mathrm{BaHit}}\cap S_{(\nabla_h,\Phi),\epsilon})\to T_{k_{(\nabla_h,\Phi)}(\alpha)}U=\mathbb{H}^1
\end{equation*}
has the form
\begin{equation*}
d(k_{(\nabla_h,\Phi)})_\alpha(\beta)=H_{(\nabla_h,\Phi)}\beta.
\end{equation*}
Here $H_{(\nabla_h,\Phi)}:A^1_B(\mathfrak{u}(E))^{\oplus 2}\to\mathbb{H}^1$ is the harmonic projection.
\end{proposition}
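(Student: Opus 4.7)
The plan is to apply formula (\ref{der-kur}) for the differential of the Kuranishi map together with the characterization (\ref{tan-mod}) of the tangent space at $\alpha$. A tangent vector $\beta\in T_\alpha(\mathcal{M}^{\mathrm{irr}}_{\mathrm{BaHit}}\cap S_{(\nabla_h,\Phi),\epsilon})$ satisfies $D_1^\ast\beta=0$ and $D_2\beta+\widetilde{[\alpha,\beta]}=0$, so substituting $\widetilde{[\alpha,\beta]}=-D_2\beta$ into (\ref{der-kur}) reduces the claim to the purely Hodge-theoretic identity
\[
\beta-D_2^\ast G\,D_2\beta=H_{(\nabla_h,\Phi)}\beta,\qquad\text{whenever }D_1^\ast\beta=0.
\]

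The key step is the standard intertwining of the Green operators on the complex (\ref{cpx}) with the differentials $D_i$. Because $(\nabla_h,\Phi)$ is a basic Hitchin pair, (\ref{cpx}) is genuinely a complex, i.e.\ $D_2D_1=0$ and dually $D_1^\ast D_2^\ast=0$. A direct computation from the definition $\Delta_i=D_iD_i^\ast+D_{i+1}^\ast D_{i+1}$ then gives $\Delta_{i+1}D_{i+1}=D_{i+1}\Delta_i$; combined with the observations that $D_{i+1}$ carries harmonic sections into harmonic sections and $\mathrm{Im}(D_{i+1})$ is $L^2$-orthogonal to the harmonic space in the next degree, this yields $G\,D_2=D_2\,G'$, where I write $G'$ for the Green operator on $A^1_B(\mathfrak{u}(E))^{\oplus 2}$ and, below, $G''$ for the one on $A_B(\mathfrak{u}(E))$.

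To finish, I apply this commutation together with the Hodge decomposition $\mathrm{Id}=H_{(\nabla_h,\Phi)}+\Delta_1G'$ on $A^1_B(\mathfrak{u}(E))^{\oplus 2}$ and the identity $\Delta_1=D_1D_1^\ast+D_2^\ast D_2$:
\[
D_2^\ast G\,D_2\beta=D_2^\ast D_2\,G'\beta=(\Delta_1-D_1D_1^\ast)G'\beta=(\mathrm{Id}-H_{(\nabla_h,\Phi)})\beta-D_1\,G''D_1^\ast\beta.
\]
The slice condition $D_1^\ast\beta=0$ annihilates the last term, leaving $d(k_{(\nabla_h,\Phi)})_\alpha(\beta)=H_{(\nabla_h,\Phi)}\beta$ as desired. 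The main obstacle is bookkeeping rather than substance: one has to keep track of which Green operator acts on which degree of the complex and justify the intertwining $G\,D_2=D_2\,G'$ carefully, but both points rest only on the fact that (\ref{cpx}) is a complex, which is precisely the content of the basic Hitchin equation; no appeal to Proposition \ref{co 2nd} or to irreducibility is needed in this particular statement.
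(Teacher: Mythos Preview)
Your argument is correct and follows essentially the same route as the paper: substitute $\widetilde{[\alpha,\beta]}=-D_2\beta$ from (\ref{tan-mod}) into (\ref{der-kur}), commute the Green operator past $D_2$, and invoke the Hodge decomposition. The only difference is that you spell out explicitly the role of the slice condition $D_1^\ast\beta=0$ in killing the $D_1G''D_1^\ast\beta$ term, whereas the paper absorbs this into the final equality $\beta-D_2^\ast D_2G\beta=H_{(\nabla_h,\Phi)}\beta$ without comment.
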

\begin{proof}
Since $D^\ast_2$ commutes with the Green operator, and we have  (\ref{der-kur}) and (\ref{tan-mod}), we have
\begin{align*}
d(k_{(\nabla_h,\Phi)})_\alpha(\beta)&=\beta+D^\ast_2G(\widetilde{[\alpha,\beta]})\\
&=\beta-D^\ast_2GD_2\beta\\
&=\beta-D^\ast_2D_2G\beta\\
&=H_{(\nabla_h,\Phi)}\beta.
\end{align*}
\end{proof}
In the previous section, we denoted $H_{(\nabla_h,\Phi)}$ just as $H$. We denoted as $H_{(\nabla_h,\Phi)}$ because later, we use the harmonic projection induced by different basic Hitchin pairs.\par
We now solve conversely an equation $d(k_{(\nabla_h,\Phi)})_\alpha(\beta)=\gamma$ for a given $\gamma\in T_{k_{(\nabla_h,\Phi)}(\alpha)}U=\mathbb{H}^1$ and $\alpha\in\mathcal{M}^{\mathrm{irr}}_{\mathrm{BaHit}}\cap S_{(\nabla_h,\Phi),\epsilon}$ with respect to $\beta\in T_\alpha (\mathcal{M}^{\mathrm{irr}}_{\mathrm{BaHit}}\cap S_{(\nabla_h,\Phi),\epsilon})$. We decompose $\beta$ as 
\begin{equation*}
\beta=D_1\gamma_0+\gamma_1+D^\ast_2\gamma_2,
\end{equation*} 
where $\gamma_0\in A_B(\mathfrak{u}(E))$, $\gamma_1\in \mathbb{H}^1$, and $\gamma_2\in A^2_B(\mathfrak{u}(E))^{\oplus 3}$.
By Proposition \ref{der-kur2}, $\gamma_1=\gamma$.
Moreover, since $D^\ast_1\beta=0,$ we have $D^\ast_1D_1\gamma_0=0$ and hence $D_1\gamma_0=0$.
Hence we obtain
\begin{equation*}
\beta=\gamma+D^\ast_2\gamma_2.
\end{equation*} 
From (\ref{tan-mod}), $\gamma_2$ satisfies the equation
\begin{equation*}
D_2D^\ast_2\gamma_2+\widetilde{[\alpha,\gamma+D^\ast_2\gamma_2]}=0.
\end{equation*} 
By the definition of the modified bracket, it is a bilinear map.
Hence 
\begin{equation}\label{kur-inv}
D_2D^\ast_2\gamma_2+\widetilde{[\alpha,D^\ast_2\gamma_2]}=-\widetilde{[\alpha,\gamma]}.
\end{equation} 
As a consequence we have
\begin{proposition}\label{kur-inv 1}
For a given $\gamma\in\mathbb{H}^1$, the inverse image $\beta= (d(k_{(\nabla_h,\Phi)})_\alpha)^{-1}(\gamma)\in T_\alpha (\mathcal{M}^{\mathrm{irr}}_{\mathrm{BaHit}}\cap S_{(\nabla_h,\Phi),\epsilon})$ is represented by
\begin{equation*}
\beta=\gamma+D^\ast_2\gamma_2
\end{equation*} 
where $\gamma_2\in A^2_B(\mathfrak{u}(E))^{\oplus 3}$ is a solution of (\ref{kur-inv}).
\end{proposition}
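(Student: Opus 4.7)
The plan is to combine Hodge decomposition for the transverse elliptic complex (\ref{cpx}) with the two linear constraints that cut out $T_\alpha(\mathcal{M}^{\mathrm{irr}}_{\mathrm{BaHit}}\cap S_{(\nabla_h,\Phi),\epsilon})$ inside $A^1_B(\mathfrak{u}(E))^{\oplus 2}$: the slice condition $D^\ast_1\beta=0$ and the linearized basic Hitchin equation $D_2\beta+\widetilde{[\alpha,\beta]}=0$ coming from (\ref{tan-mod}).

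First, I would write the orthogonal Hodge decomposition
\begin{equation*}
\beta = D_1\gamma_0 + \gamma_1 + D^\ast_2\gamma_2,
\end{equation*}
with $\gamma_0\in A_B(\mathfrak{u}(E))$, $\gamma_1\in\mathbb{H}^1$, and $\gamma_2\in A^2_B(\mathfrak{u}(E))^{\oplus 3}$. Applying Proposition \ref{der-kur2} identifies the harmonic part directly: since $d(k_{(\nabla_h,\Phi)})_\alpha$ restricted to the tangent space is the harmonic projection,
\begin{equation*}
\gamma = d(k_{(\nabla_h,\Phi)})_\alpha(\beta) = H_{(\nabla_h,\Phi)}\beta = \gamma_1.
\end{equation*}

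Next I would kill the $D_1$-exact summand. Because $\gamma_1$ is harmonic and $D^\ast_1 D^\ast_2 = (D_2 D_1)^\ast = 0$ (the sequence (\ref{cpx}) being a complex), the slice condition $D^\ast_1\beta=0$ collapses to $D^\ast_1 D_1\gamma_0 = 0$; pairing with $\gamma_0$ in $L^2$ on the compact $M$ gives $\|D_1\gamma_0\|_{L^2}^2 = 0$, hence $D_1\gamma_0 = 0$ and $\beta = \gamma + D^\ast_2\gamma_2$.

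Finally I would substitute this simplified expression into the tangent equation. Because $\gamma\in\mathbb{H}^1\subset\ker D_2$ and the modified bracket $\widetilde{[\cdot,\cdot]}$ is bilinear (as noted after (\ref{der-kur})), the equation $D_2\beta+\widetilde{[\alpha,\beta]}=0$ unfolds to
\begin{equation*}
D_2 D^\ast_2\gamma_2 + \widetilde{[\alpha, D^\ast_2\gamma_2]} = -\widetilde{[\alpha,\gamma]},
\end{equation*}
which is exactly (\ref{kur-inv}). The one genuinely delicate step is the reduction of the slice condition, which uses both the complex identity $D_2 D_1 = 0$ and the $L^2$-integration by parts on smooth sections; solvability of (\ref{kur-inv}) in $\gamma_2$ does not require a separate fixed-point argument, because the existence of such $\beta$ is already guaranteed by the fact that $d(k_{(\nabla_h,\Phi)})_\alpha$ is a linear isomorphism on these tangent spaces, so the present proposition only needs to exhibit the shape of the preimage.
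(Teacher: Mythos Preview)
Your argument is correct and follows essentially the same route as the paper: Hodge-decompose $\beta$, use Proposition \ref{der-kur2} to identify the harmonic part with $\gamma$, use the slice condition $D^\ast_1\beta=0$ together with $D^\ast_1D^\ast_2=0$ to kill $D_1\gamma_0$, and then plug $\beta=\gamma+D^\ast_2\gamma_2$ into the linearized equation (\ref{tan-mod}) to obtain (\ref{kur-inv}). You spell out a couple of justifications (the complex identity and the $L^2$-pairing) that the paper leaves implicit, but otherwise the proofs coincide.
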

We note that at the origin, $T_0 (\mathcal{M}^{\mathrm{irr}}_{\mathrm{BaHit}}\cap S_{(\nabla_h,\Phi),\epsilon})=\mathbb{H}^1$ and $d(k_{(\nabla_h,\Phi)})_0=\mathrm{Id}_{\mathbb{H}^1}$ holds.\par
Let $X,Y, Z\in T_0 (\mathcal{M}^{\mathrm{irr}}_{\mathrm{BaHit}}\cap S_{(\nabla_h,\Phi),\epsilon})=\mathbb{H}^1$.
Since $\mathbb{H}^1$ is affine, these vectors also define vector fields on $U$ canonically.
We define a vector field $\overline X$ on  $\mathcal{M}^{\mathrm{irr}}_{\mathrm{BaHit}}\cap S_{(\nabla_h,\Phi),\epsilon}$ as 
\begin{equation*}
\overline X_\alpha:=d((k_{(\nabla_h,\Phi)})^{-1})_{k_{(\nabla_h,\Phi)}(\alpha)}(X), \, \alpha\in\mathcal{M}^{\mathrm{irr}}_{\mathrm{BaHit}}\cap S_{(\nabla_h,\Phi),\epsilon}.
\end{equation*}
We define $\overline Y,\overline Z$ in the same manner.
From Proposition \ref{kur-inv 1}, $\overline X_\alpha$ has the form 
\begin{equation*}
\overline X_\alpha=X+D^\ast_2\gamma(\alpha,X)
\end{equation*}
where $\gamma(\alpha,X)\in  A^2_B(\mathfrak{u}(E))^{\oplus 3}$ and it satisfies the following equation
\begin{equation}\label{gam-eq}
D_2D^\ast_2\gamma(\alpha,X)+\widetilde{[\alpha,D^\ast_2\gamma(\alpha,X)]}=-\widetilde{[\alpha,X]}.
\end{equation}
We note that at $\alpha=0$, $\overline X_0=X$ and $D^\ast_2\gamma(0,X)=0$.\par
Let $c(t)$ be a curve on $\mathcal{M}^{\mathrm{irr}}_{\mathrm{BaHit}}\cap S_{(\nabla_h,\Phi),\epsilon}$ defined by $c(t):=(k_{(\nabla_h,\Phi)})^{-1}(tX)$.
Then we have $c(0)=0$ and $\frac{d}{dt}c(t)|_{t=0}=X\in T_0 (\mathcal{M}^{\mathrm{irr}}_{\mathrm{BaHit}}\cap S_{(\nabla_h,\Phi),\epsilon})=\mathbb{H}^1$.
\begin{proposition}\label{der-m}
The Riemannian metric $g$ on $\mathcal{M}^{\mathrm{irr}}_{\mathrm{BaHit}}$ satisfies at $\alpha=0$ in a slice neighborhood $\mathcal{M}^{\mathrm{irr}}_{\mathrm{BaHit}}\cap S_{(\nabla_h,\Phi),\epsilon}$
\begin{equation*}
Xg_{[(\nabla_h,\Phi)]}(Y,Z)=0
\end{equation*}
for every $X,Y,Z\in T_0 (\mathcal{M}^{\mathrm{irr}}_{\mathrm{BaHit}}\cap S_{(\nabla_h,\Phi),\epsilon})=\mathbb{H}^1.$
\end{proposition}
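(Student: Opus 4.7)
The plan is to differentiate $g_{c(t)}(\overline Y_{c(t)}, \overline Z_{c(t)})$ along the curve $c(t):=(k_{(\nabla_h,\Phi)})^{-1}(tX)$ at $t=0$ and to show the derivative vanishes through two independent vanishings: one for the $L^2$-pairing of the two vector fields, and one for the correction coming from passing from the $L^2$-pairing to the metric via the harmonic projection. The key structural input, provided by Proposition~\ref{kur-inv 1}, is that $\overline Y_\alpha = Y + D^\ast_2\gamma(\alpha,Y)$ with $D^\ast_2\gamma(0,Y)=0$ (and analogously for $\overline Z$), so that the deformations of $\overline Y$ and $\overline Z$ away from the basepoint lie in $\mathrm{im}\,D^\ast_2$. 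Since $Y, Z\in\mathbb{H}^1$ are harmonic so $D_2 Y = D_2 Z = 0$, integration by parts will force the relevant pairings to vanish.

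Concretely, orthogonality of $Y, Z$ to $\mathrm{im}\,D^\ast_2$ (since $(Y, D^\ast_2\gamma(\alpha, Z))_{L^2} = (D_2 Y, \gamma(\alpha, Z))_{L^2} = 0$ and symmetrically) gives
\begin{equation*}
(\overline Y_{c(t)}, \overline Z_{c(t)})_{L^2} = (Y,Z)_{L^2} + \bigl(D^\ast_2\gamma(c(t),Y), D^\ast_2\gamma(c(t),Z)\bigr)_{L^2},
\end{equation*}
and the last term is $O(t^2)$ because $D^\ast_2\gamma(0,Y)=D^\ast_2\gamma(0,Z)=0$, so its $t$-derivative at $0$ vanishes. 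To pass from the $L^2$-pairing to the metric, I identify $T_{c(t)}\mathcal{M}^{\mathrm{irr}}_{\mathrm{BaHit}}\simeq\mathbb{H}^1_{c(t)}$ via the $L^2$-orthogonal harmonic projection $H_{c(t)}$, so that $g_{c(t)}(V,W)=(H_{c(t)}V,H_{c(t)}W)_{L^2}$. Orthogonality of $H_{c(t)}$ then yields
\begin{equation*}
g_{c(t)}(\overline Y_{c(t)},\overline Z_{c(t)}) = (\overline Y_{c(t)},\overline Z_{c(t)})_{L^2} - \bigl((I-H_{c(t)})\overline Y_{c(t)},(I-H_{c(t)})\overline Z_{c(t)}\bigr)_{L^2},
\end{equation*}
and since $\overline Y_0=Y$, $\overline Z_0=Z$ are harmonic at the basepoint we have $(I-H_0)\overline Y_0=(I-H_0)\overline Z_0=0$; assuming smooth dependence of $H_{c(t)}$ on $t$, both factors in the correction are $O(t)$, so the correction is $O(t^2)$ and contributes nothing to the $t$-derivative at $0$. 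Combining, $X g_{[(\nabla_h,\Phi)]}(Y,Z)=0$.

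The one nontrivial technical point is the smooth dependence of the harmonic projection $H_\alpha$ on $\alpha$ near $0$, which is what upgrades the correction into an $O(t^2)$ quantity. This reduces to smoothness of the family of Laplacians $\Delta_{i,\alpha}:=D_{i,\alpha}D^\ast_{i,\alpha}+D^\ast_{i+1,\alpha}D_{i+1,\alpha}$ in $\alpha$ together with the locally constant dimension of $\mathbb{H}^1_\alpha$ on $\mathcal{M}^{\mathrm{irr}}_{\mathrm{BaHit}}$ (itself a consequence of the smoothness supplied by Corollary~\ref{m-sm}), so that the spectral gap above $0$ is preserved and $H_\alpha$ varies smoothly by standard analytic perturbation theory.
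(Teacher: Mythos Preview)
Your proof is correct and takes a genuinely different, more streamlined route than the paper. The paper differentiates $(H_{c(t)}\overline Y_{c(t)}, H_{c(t)}\overline Z_{c(t)})_{L^2}$ directly, splitting each factor via the product rule into a term with $\frac{d}{dt}H_{c(t)}|_{t=0}$ acting on $Y$ (computed explicitly in Lemma~\ref{der-m2} as $-D_1G[X,Y]^2 - D_2^\ast G\widetilde{[X,Y]}$) and a term with $H_{(\nabla_h,\Phi)}$ acting on $\frac{d}{dt}\overline Y_{c(t)}|_{t=0}$ (shown to vanish in Lemma~\ref{der-m1}); the surviving terms then pair against harmonic $Y,Z$ and die by $D_1^\ast Z = D_2 Z = 0$. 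Your argument bypasses these explicit computations by writing $g_{c(t)} = (\,\cdot\,,\,\cdot\,)_{L^2} - ((I-H_{c(t)})\,\cdot\,,(I-H_{c(t)})\,\cdot\,)_{L^2}$ and observing that both pieces are $O(t^2)$: the first because the cross terms $(Y,D_2^\ast\gamma)_{L^2}$ vanish identically by $D_2Y=0$, the second because each factor $(I-H_{c(t)})\overline Y_{c(t)}$ vanishes at $t=0$. This is cleaner for the present proposition, and you correctly flag the one analytic input (smooth dependence of $H_{c(t)}$), which the paper also uses implicitly. The trade-off is that the paper's explicit Lemmas~\ref{der-m1} and~\ref{der-m2} are reused verbatim in the proof of Proposition~\ref{der-k} for the K\"ahler forms $\omega_{\mathcal I}$, where your $O(t^2)$ decomposition does not apply as directly because $I$ need not commute with $D_2^\ast$; so the paper's extra work here is an investment for the next step.
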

We remark that this Proposition shows that the coordinate obtained by the Kuranishi map is normal.
\begin{proof}
By the definition of the metric 
\begin{align*}
Xg_{[(\nabla_h,\Phi)]}(Y,Z)&=\frac{d}{dt}g_{[(\nabla_h,\Phi)+c(t)]}(\overline Y_{c(t)}, \overline Z_{c(t)})\bigg|_{t=0}\\
&=\frac{d}{dt}\bigg(H_{(\nabla_h,\Phi)+c(t)}\overline Y_{c(t)},H_{(\nabla_h,\Phi)+c(t)} \overline Z_{c(t)}\bigg)_{L^2}\bigg|_{t=0}\\
&=\bigg(\frac{d}{dt}(H_{(\nabla_h,\Phi)+c(t)}\overline Y_{c(t)})|_{t=0},Z\bigg)_{L^2}+\bigg(Y,\frac{d}{dt}(H_{(\nabla_h,\Phi)+c(t)} \overline Z_{c(t)})\bigg|_{t=0}\bigg)_{L^2}.
\end{align*}
Differentiating $H_{(\nabla_h,\Phi)+c(t)}\overline Y_{c(t)}$ at $t=0$, we get
\begin{align*}
\frac{d}{dt}\bigg(H_{(\nabla_h,\Phi)+c(t)}\overline Y_{c(t)}\bigg)\bigg|_{t=0}=\bigg(\frac{d}{dt}H_{(\nabla_h,\Phi)+c(t)}\bigg|_{t=0}\bigg)Y+H_{(\nabla_h,\Phi)}\bigg(\frac{d}{dt}\overline Y_{c(t)}\bigg|_{t=0}\bigg).
\end{align*}
Before we proceed, we prepare two Lemmas.
\begin{lemma}\label{der-m1}
\begin{equation*}
H_{(\nabla_h,\Phi)}\bigg(\frac{d}{dt}\overline Y_{c(t)}\bigg|_{t=0}\bigg)=0.
\end{equation*}
\end{lemma}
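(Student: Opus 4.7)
The plan is to use the explicit formula $\overline{Y}_{c(t)} = Y + D^{\ast}_2 \gamma(c(t),Y)$ coming from Proposition \ref{kur-inv 1} and observe that the time derivative lies in the image of $D^{\ast}_2$, which is $L^2$-orthogonal to the harmonic subspace $\mathbb{H}^1$.

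More concretely, since $Y \in \mathbb{H}^1$ is independent of $t$, differentiating in $t$ at $t=0$ gives
\begin{equation*}
\frac{d}{dt}\overline{Y}_{c(t)}\bigg|_{t=0} = D^{\ast}_2\left(\frac{d}{dt}\gamma(c(t),Y)\bigg|_{t=0}\right),
\end{equation*}
an element of $\operatorname{Im}(D^{\ast}_2) \subset A^1_B(\mathfrak{u}(E))^{\oplus 2}$, where crucially $D^{\ast}_2$ is the adjoint associated to the fixed basic Hitchin pair $(\nabla_h,\Phi)$ (since the Green operator and the whole Kuranishi construction are built at this basepoint, not at $(\nabla_h,\Phi)+c(t)$).

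Next I would invoke the $L^2$-Hodge decomposition for the transverse elliptic complex (\ref{cpx}) at $(\nabla_h,\Phi)$:
\begin{equation*}
A^1_B(\mathfrak{u}(E))^{\oplus 2} = \mathbb{H}^1 \oplus \operatorname{Im}(D_1) \oplus \operatorname{Im}(D^{\ast}_2),
\end{equation*}
which is guaranteed by the transverse ellipticity and yields three mutually $L^2$-orthogonal summands. The harmonic projection $H_{(\nabla_h,\Phi)}$ is the $L^2$-orthogonal projection onto $\mathbb{H}^1$, so in particular $H_{(\nabla_h,\Phi)} \circ D^{\ast}_2 = 0$. Applying this to the expression above yields the claim.

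The only real subtlety I foresee is bookkeeping: one must verify that the $D^{\ast}_2$ appearing in the formula $\overline{Y}_\alpha = Y + D^{\ast}_2\gamma(\alpha,Y)$ really is the adjoint at $(\nabla_h,\Phi)$ (it is, since $\gamma(\alpha,Y)$ solves the equation (\ref{kur-inv}) built from $D_2, D^{\ast}_2$ and the modified bracket with $\alpha$, all relative to the fixed basepoint), so that the Hodge decomposition being applied is the correct one. Once that is nailed down, no calculation of $\partial_t\gamma(c(t),Y)|_{t=0}$ is needed; the conclusion follows purely from the structural fact that the time derivative lands in $\operatorname{Im}(D^{\ast}_2)$.
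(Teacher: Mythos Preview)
Your proof is correct and follows essentially the same approach as the paper: both differentiate $\overline Y_{c(t)} = Y + D^\ast_2\gamma(c(t),Y)$ to place the derivative in $\mathrm{Im}(D^\ast_2)$ and then invoke the Hodge decomposition at $(\nabla_h,\Phi)$ to conclude. The only difference is that the paper additionally computes $\frac{d}{dt}\gamma(c(t),Y)|_{t=0}$ explicitly (obtaining $-D^\ast_2 G\widetilde{[X,Y]}$ for the derivative of $\overline Y$), a step you correctly observe is unnecessary for the lemma itself.
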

\begin{proof}
From Proposition \ref{kur-inv 1}, we have 
\begin{align*}
\frac{d}{dt}\overline Y_{c(t)}\bigg|_{t=0}&=\frac{d}{dt}(Y+D^\ast_2\gamma(c(t),Y))\bigg|_{t=0}\\
&=D^\ast_2\bigg(\frac{d}{dt}\gamma(c(t),Y)\bigg|_{t=0}\bigg).
\end{align*}
From (\ref{gam-eq}), $\gamma(c(t),Y)$ satisfies the equation
\begin{equation*}
D_2D^\ast_2\gamma(c(t),Y)+\widetilde{[c(t),D^\ast_2\gamma(c(t),Y)]}=-\widetilde{[c(t),Y]}.
\end{equation*}
We differential this equation at $t=0$ and we obtain 
\begin{equation*}
D_2D^\ast_2\bigg(\frac{d}{dt}\gamma(c(t),Y)\bigg|_{t=0}\bigg)=-\widetilde{[X,Y]}.
\end{equation*}
By Proposition \ref{co 2nd} and the Hodge decomposition, we have $a,b,c\in\mathbb{R}$ such that 
\begin{align*}
\frac{d}{dt}\gamma(c(t),Y)|_{t=0}&=
\sqrt{-1}
\begin{pmatrix}
a\\
b\\
c\\
\end{pmatrix}
d\eta+GD_2D^\ast_2\bigg(\frac{d}{dt}\gamma(c(t),Y)|_{t=0}\bigg)\\
&=
\sqrt{-1}
\begin{pmatrix}
a\\
b\\
c\\
\end{pmatrix}
d\eta-\widetilde{[X,Y]}.
\end{align*}
Then we have 
\begin{align*}
\frac{d}{dt}\overline Y_{c(t)}|_{t=0}&=D^\ast_2\bigg(\frac{d}{dt}\gamma(c(t),Y)\bigg|_{t=0}\bigg)\\
&=D^\ast_2\bigg(\sqrt{-1}
\begin{pmatrix}
a\\
b\\
c\\
\end{pmatrix}
d\eta-G\widetilde{[X,Y]}\bigg)\\
&=-D^\ast_2G\widetilde{[X,Y]}.
\end{align*}
Then the Lemma is obtained by the Hodge decomposition.
\end{proof}
\begin{lemma}\label{der-m2}
\begin{align*}
\bigg(\frac{d}{dt}H_{(\nabla_h,\Phi)+c(t)}|_{t=0}\bigg)Y&=-G[X,D^\ast_1Y]^1-D_1G[X,Y]^2-D^\ast_2G\widetilde{[X,Y]}-G[X,D_2Y]^3\\
&=-D_1G[X,Y]^2-D^\ast_2G\widetilde{[X,Y]}.
\end{align*}
Here 
\begin{align*}
[X,D^\ast_1Y]^1:&=
\begin{pmatrix}
[X_1,D^\ast_1 Y]\\
[X_2,D^\ast_1 Y]
\end{pmatrix},\\
[X,Y]^2:&=[X^\ast_1, Y_1]+[X^\ast_2, Y_2],\\
[X,D_2 Y]^3:&=
\begin{pmatrix}
[X_1,\nabla_h Y_1-[\Phi,Y_2]]+[\star_\xi X^\ast_2,\nabla_h Y_2+[Y_1,\Phi]]+[X^\ast_2,\nabla_h\ast Y_2+[Y_1,\ast\Phi]]\\
-[X^\ast_2,\nabla_hY_2+[Y_1,\Phi]]-[\star_\xi X^\ast_2,\nabla_hY_1-[\Phi,Y_2]]+[X^\ast_2,\nabla_h\star_\xi Y_2+[Y_1,\star_\xi\Phi]]
\end{pmatrix}.
\end{align*}
\end{lemma}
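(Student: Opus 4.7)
The strategy is to differentiate the identity $G(t)\Delta_1(t) + H(t) = \mathrm{Id}$ at $t=0$, where $\Delta_1(t) = D_1(t) D_1(t)^{*} + D_2(t)^{*} D_2(t)$ is the family of Laplacians induced by the basic Hitchin pairs $(\nabla_h,\Phi)+c(t)$ and $G(t), H(t)$ are the associated Green operator and harmonic projection on $A^1_B(\mathfrak{u}(E))^{\oplus 2}$. Writing dots for $\tfrac{d}{dt}|_{t=0}$, applying the differentiated identity to $Y \in \mathbb{H}^1$ and using $\Delta_1(0)Y = 0$ yields $\dot H Y = -G\,\dot\Delta_1 Y$. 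Expanding
\begin{equation*}
\dot\Delta_1 = (\dot D_1)D_1^{*} + D_1(\dot D_1^{*}) + (\dot D_2^{*})D_2 + D_2^{*}(\dot D_2)
\end{equation*}
decomposes the right-hand side into exactly four pieces, one for each summand appearing in the first displayed line of the lemma.

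Next I would compute the four operator derivatives explicitly. Since the deformed connection $\nabla_h + tX_1$ acts on $\mathfrak{u}(E)$-valued sections as $\nabla_h + t[X_1,\cdot]$ and $\Phi + tX_2$ brackets as $[\Phi,\cdot] + t[X_2,\cdot]$, reading off the definitions in (\ref{cpx 1}) gives $\dot D_1 Z = [X,Z]^1$ and $\dot D_2 Y' = \widetilde{[X,Y']}$. For the adjoints, Lemma \ref{f-ad} expresses $\nabla_h^{*},\Phi^{*},(\star_\xi\Phi)^{*}$ as $\star_\xi$-conjugates on $A^{*}_B(\mathfrak{u}(E))$; applying the same conjugation scheme to the perturbation data identifies $\dot D_1^{*}$ with $[X,\cdot]^2$ (the starred entries $X_i^{*}$ arise exactly from these $\star_\xi$-conjugations) and $\dot D_2^{*}$ with $[X,\cdot]^3$.

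To produce the final arrangement in the lemma, I would use the intertwining identities $\Delta_1 D_1 = D_1\Delta_0$ and $\Delta_1 D_2^{*} = D_2^{*}\Delta_2$, which yield $GD_1 = D_1 G$ and $GD_2^{*} = D_2^{*}G$ on the non-harmonic subspaces, so that $D_1$ and $D_2^{*}$ can be commuted out of $G$ wherever they appear on the right. Substituting the four operator identifications into $-G\dot\Delta_1 Y$ and performing these commutations produces exactly the first equality of the lemma. The second equality is then immediate: since $Y \in \mathbb{H}^1$, we have $D_1^{*}Y = 0$ and $D_2 Y = 0$, which kills the first and fourth summands.

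The main obstacle will be the explicit verification of $\dot D_2^{*} = [X,\cdot]^3$. Because $D_2$ has three components mixing $\nabla_h$, $[\Phi,\cdot]$ and $\star_\xi$, its $L^2$-adjoint is a combination of $\nabla_h^{*}$, $\Phi^{*}$, and $(\star_\xi\Phi)^{*}$; linearizing then pairs each $\star_\xi$-conjugation in the adjoint with the corresponding component of $X_2$, producing precisely the intricate $[X_2^{*},\cdot]$ and $[\star_\xi X_2^{*},\cdot]$ pattern visible in the stated definition of $[X,\cdot]^3$. Once this notational correspondence is nailed down, the check reduces to a lengthy but routine bookkeeping exercise using only Lemma \ref{f-ad} and Lemma \ref{bra}.
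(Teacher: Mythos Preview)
Your approach is essentially identical to the paper's: both differentiate the Hodge identity $H(t)=\mathrm{Id}-G(t)\Delta_1(t)$ at $t=0$, use $\Delta_1(0)Y=0$ to reduce to $-G\dot\Delta_1 Y$, expand $\dot\Delta_1$ by the product rule into the four pieces $[X,D_1^{*}Y]^1$, $D_1[X,Y]^2$, $D_2^{*}\widetilde{[X,Y]}$, $[X,D_2Y]^3$, and then commute $G$ past $D_1$ and $D_2^{*}$. The paper performs the same steps (and is terser about the commutation and the identification of the operator derivatives), so your proposal is correct and matches the paper's argument.
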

\begin{proof}
The second equality follows from the harmonicity of $Y$. 
We prove the first equality.\par
By the Hodge decomposition, we have
\begin{align*}
\bigg(\frac{d}{dt}H_{(\nabla_h,\Phi)+c(t)}\bigg|_{t=0}\bigg)Y&=\frac{d}{dt}(H_{(\nabla_h,\Phi)+c(t)}Y)\bigg|_{t=0}\\
&=-\frac{d}{dt}(G_{c(t)}\Delta_{1,(\nabla_h,\Phi)+c(t)}Y)\bigg|_{t=0}\\
&=-\frac{d}{dt}\bigg(G_{c(t)}\bigg|_{t=0}\bigg)\Delta_{1,(\nabla_h,\Phi)}Y+G\frac{d}{dt}(\Delta_{1,(\nabla_h,\Phi)+c(t)}Y)\bigg|_{t=0}\\
&=-G\frac{d}{dt}(\Delta_{1,(\nabla_h,\Phi)+c(t)}Y)\bigg|_{t=0}.
\end{align*}
We now calculate $\frac{d}{dt}(\Delta_{1,(\nabla_h,\Phi)+c(t)}Y)|_{t=0}$.
\begin{align*}
\frac{d}{dt}(\Delta_{1,(\nabla_h,\Phi)+c(t)}Y)\bigg|_{t=0}&=\frac{d}{dt}(D_{1,(\nabla_h,\Phi)+c(t)}D^\ast_{1,(\nabla_h,\Phi)+c(t)}Y+D^\ast_{2,(\nabla_h,\Phi)+c(t)}D_{2,(\nabla_h,\Phi)+c(t)}Y)\bigg|_{t=0}\\
&=[X,D^\ast_1Y]^1+D_1[X,Y]^2+D^\ast_2\widetilde{[X,Y]}+[X,D_2Y]^3\\
&=D_1[X,Y]^2+D^\ast_2\widetilde{[X,Y]}.
\end{align*}
Hence, the claim is proved.
\end{proof}
We now prove the Proposition.
From the two Lemmas above, we have 
\begin{align*}
Xg_{[(\nabla_h,\Phi)]}(Y,Z)&=\bigg(\frac{d}{dt}(H_{(\nabla_h,\Phi)+c(t)}\overline Y_{c(t)})\bigg|_{t=0},Z\bigg)_{L^2}+\bigg(Y,\frac{d}{dt}(H_{(\nabla_h,\Phi)+c(t)} \overline Z_{c(t)})\bigg|_{t=0}\bigg)_{L^2}\\
&=\bigg(-D_1G[X,Y]_2-D^\ast_2G\widetilde{[X,Y]},Z\bigg)_{L^2}+\bigg(Y,-D_1G[X,Z]_2-D^\ast_2G\widetilde{[X,Z]}\bigg)_{L^2}\\
&=0.
\end{align*}
The last follows from the harmonicity of $Y$ and $Z$.
\end{proof}
\subsection{HyperK\"ahler Structure on $\mathcal{M}^{\mathrm{irr}}_{\mathrm{BaHit}}$}
We use the same notation as the previous section. We assume that $\mathcal{M}^{\mathrm{irr}}_{\mathrm{BaHit}}$ is not an empty set.\par
We define almost complex structures $\mathcal{I},\mathcal{J},\mathcal{K}$ on $\mathcal{M}^{\mathrm{irr}}_{\mathrm{BaHit}}$.
We first fix a $(\nabla_h,\Phi)\in\mathcal{A}_{\mathrm{BaHit}}$.
First, we show that $A^1_B(\mathfrak{u}(E))^{\oplus 2}$ has the structure of a quaternion vector space.
Next, we show that they induce a quaternion structure to $\mathbb{H}^1$.  \par
Let $\alpha=(\alpha_1,\alpha_2)\in A^1_B(\mathfrak{u}(E))^{\oplus 2}$. We define $I,J,K\in \mathrm{End}(A^1_B(\mathfrak{u}(E))^{\oplus 2})$ as follows
\begin{align*}
I
\begin{pmatrix}
\alpha_1\\
\alpha_2
\end{pmatrix}
&:=
\begin{pmatrix}
\star_\xi \alpha_1\\
-\star_\xi \alpha_2
\end{pmatrix},\\
J
\begin{pmatrix}
\alpha_1\\
\alpha_2
\end{pmatrix}
&:=
\begin{pmatrix}
-\alpha_2\\
\alpha_1
\end{pmatrix},\\
K
\begin{pmatrix}
\alpha_1\\
\alpha_2
\end{pmatrix}
&:=
\begin{pmatrix}
-\star_\xi \alpha_2\\
-\star_\xi \alpha_1
\end{pmatrix}.
\end{align*}
By Proposition \ref{cpx str} and definition of $I,J,$ and, $K$ we can check that 
\begin{align*}
I^2=J^2=K^2=-\mathrm{Id}, \, K=IJ
\end{align*}
and hence $I,J,K$ defines a quaternion structure of $A^1_B(\mathfrak{u}(E))^{\oplus 2}$.
To show that $I,J,K$ induces a quaternion structure to $\mathbb{H}^1$, we only need to check that $I,J,K$ preserves $\mathrm{Ker}D^\ast_1\cap\mathrm{Ker}D_2$.
This can be shown by direct computation.
Note that for $\alpha=(\alpha_1,\alpha_2)\in A^1_B(\mathfrak{u}(E))^{\oplus 2}$, we have
\begin{equation}\label{dual D1}
\begin{split}
D^\ast_1\alpha&=\nabla^\ast_h\alpha_1+\Phi^\ast\alpha_2\\
&=-\star_\xi\nabla_h\star_\xi\alpha_1-\star_\xi[\Phi,\star_\xi\alpha_2].
\end{split}
\end{equation}
Hence by (\ref{cpx 1}) and (\ref{dual D1}),  $\alpha\in\mathrm{Ker}D^\ast_1\cap\mathrm{Ker}D_2$ if and only if the following equations hold
\begin{equation}\label{cpx 2}
\begin{split}
&\nabla_h\star_\xi\alpha_1+[\Phi,\star_\xi\alpha_2]=0,\\
&\nabla_h\alpha_1-[\Phi,\alpha_2]=0,\\
&\nabla_h\alpha_2+[\alpha_1,\Phi]=0,\\
&\nabla_h\star_\xi \alpha_2+[\alpha_1,\star_\xi\Phi]=0.
\end{split}
\end{equation}
Then it is easy to check that if $\alpha\in\mathrm{Ker}D^\ast_1\cap\mathrm{Ker}D_2$, then  $I\alpha$, $J\alpha$, and $K\alpha$  satisfies (\ref{cpx 2}) and hence $I\alpha,J\alpha,K\alpha\in\mathrm{Ker}D^\ast_1\cap\mathrm{Ker}D_2$. Hence $(\mathbb{H}^1,I,J,K)$ is a quaternion vector space. These $I,J,K$ induce almost complex structures to $\mathcal{M}^{\mathrm{irr}}_{\mathrm{BaHit}}$ and we denote as $\mathcal{I},\mathcal{J},\mathcal{K}$ for the corresponding almost complex structures. It is clear that $\mathcal{I},\mathcal{J},\mathcal{K}$ satisfies the quaternion relationship.\par
To compatibility of $g$ with $\mathcal{I},\mathcal{J},\mathcal{K}$ can be shown by using the following equality: Let $A, B\in A^1_{B}(\mathfrak{u}(E))$. 
Then we have 
\begin{align*}
\mathrm{Tr}(A\wedge\star_\xi B)&=\mathrm{Tr}(A^{1,0}\wedge\star_\xi B^{0,1})+\mathrm{Tr}(A^{0,1}\wedge\star_\xi B^{1,0})\\
&=\sqrt{-1}\mathrm{Tr}(A^{1,0}\wedge B^{0,1})-\sqrt{-1}\mathrm{Tr}(A^{0,1}\wedge B^{1,0})\\
&=-\mathrm{Tr}(\star_\xi A^{1,0}\wedge B^{0,1})-\mathrm{Tr}(\star_\xi A^{0,1}\wedge B^{1,0})\\
&=-\mathrm{Tr}(\star_\xi A\wedge B).
\end{align*}
We now show $(\mathcal{M}^{\mathrm{irr}}_{\mathrm{BaHit}},g,\mathcal{I},\mathcal{J},\mathcal{K})$ is a hyperK\"ahler manifold.
 Let $\omega_{\mathcal{I}},\omega_{\mathcal{J}},\omega_{\mathcal{K}}$ be the corresponding K\"ahler forms. 
 We give the explicit form of $\omega_{\mathcal{I}},\omega_{\mathcal{J}},\omega_{\mathcal{K}}$ for $[(\nabla_h,\Phi)]\in \mathcal{M}^{\mathrm{irr}}_{\mathrm{BaHit}}$ and $\alpha=(\alpha_1,\alpha_2), \beta=(\beta_1,\beta_2)\in \mathbb{H}^1\simeq T_{[(\nabla_h,\Phi)]}\mathcal{M}^{\mathrm{irr}}_{\mathrm{BaHit}}$ for convenience.
\begin{align*}
\omega_{\mathcal{I},[(\nabla_h,\Phi)]}(\alpha,\beta)&=\int_M\mathrm{Tr}(\alpha_1\wedge\beta_1-\alpha_2\wedge\beta_2)\wedge\eta,\\
\omega_{\mathcal{J},[(\nabla_h,\Phi)]}(\alpha,\beta)&=\int_M\mathrm{Tr}(\alpha_1\wedge\star_\xi\beta_2-\alpha_2\wedge\star_\xi\beta_1)\wedge\eta,\\
\omega_{\mathcal{K},[(\nabla_h,\Phi)]}(\alpha,\beta)&=-\int_M\mathrm{Tr}(\alpha_1\wedge\beta_2+\alpha_2\wedge\beta_1)\wedge\eta.
\end{align*}
\begin{proposition}\label{der-k}
The K\"ahler form $\omega_{\mathcal{I}}$ on $\mathcal{M}^{\mathrm{irr}}_{\mathrm{BaHit}}$ satisfies at $\alpha=0$ in a slice neighborhood $\mathcal{M}^{\mathrm{irr}}_{\mathrm{BaHit}}\cap S_{(\nabla_h,\Phi),\epsilon}$
\begin{equation*}
X\omega_{\mathcal{I},[(\nabla_h,\Phi)]}(Y,Z)=0
\end{equation*}
for every $X,Y,Z\in T_0 (\mathcal{M}^{\mathrm{irr}}_{\mathrm{BaHit}}\cap S_{(\nabla_h,\Phi),\epsilon})=\mathbb{H}^1.$
\end{proposition}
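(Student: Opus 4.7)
The plan is to mimic the proof of Proposition \ref{der-m} step by step, with the Riemannian pairing $g$ replaced by the bilinear form $\omega_{\mathcal{I}}$ on $A^1_B(\mathfrak{u}(E))^{\oplus 2}$. The key structural point that makes this work is that $\omega_{\mathcal{I}}$ on $\mathbb{H}^1$ is the restriction of a fixed constant ambient pairing, so as in the Riemannian case,
\begin{equation*}
\omega_{\mathcal{I},[(\nabla_h,\Phi)+c(t)]}(\overline Y_{c(t)},\overline Z_{c(t)})=\omega_{\mathcal{I}}\bigl(H_{(\nabla_h,\Phi)+c(t)}\overline Y_{c(t)},\,H_{(\nabla_h,\Phi)+c(t)}\overline Z_{c(t)}\bigr).
\end{equation*}
Differentiating at $t=0$ and invoking Lemmas \ref{der-m1} and \ref{der-m2} verbatim, since they depend only on the Hodge theory of the complex (\ref{cpx}) and not on which pairing one uses, produces
\begin{equation*}
X\omega_{\mathcal{I}}(Y,Z)=\omega_{\mathcal{I}}\bigl(-D_1G[X,Y]^2-D^\ast_2G\widetilde{[X,Y]},\,Z\bigr)+\omega_{\mathcal{I}}\bigl(Y,\,-D_1G[X,Z]^2-D^\ast_2G\widetilde{[X,Z]}\bigr).
\end{equation*}

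The second step is to show that each of the four summands vanishes. For this I would establish the pointwise identity $\omega_{\mathcal{I}}(\alpha,\beta)=g(\alpha,I\beta)$ on $A^1_B(\mathfrak{u}(E))^{\oplus 2}$, which is a short pointwise computation using $\star_\xi^2=-\mathrm{Id}$ (Proposition \ref{cpx str}) and the explicit definitions of $\omega_{\mathcal{I}}$, $g$, and $I$. Since $I$ preserves $\mathbb{H}^1=\mathrm{Ker}D^\ast_1\cap\mathrm{Ker}D_2$, as already verified in the construction of $\mathcal{I}$, the vectors $IY$ and $IZ$ are again harmonic. Using that $g$ restricted to $A^1_B(\mathfrak{u}(E))^{\oplus 2}$ equals the $L^2$-pairing and applying the formal adjoints $D^\ast_1,D_2$, one gets
\begin{equation*}
\omega_{\mathcal{I}}(D_1 a,Z)=g(D_1 a, IZ)=(a,D^\ast_1 IZ)_{L^2}=0,\qquad \omega_{\mathcal{I}}(D^\ast_2 b,Z)=g(D^\ast_2 b, IZ)=(b,D_2 IZ)_{L^2}=0,
\end{equation*}
and the two symmetric summands with $Y$ in the first slot vanish by the antisymmetry of $\omega_{\mathcal{I}}$ (equivalently, by the same argument applied with $Y$ in place of $Z$).

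The main obstacle is purely bookkeeping: one must carefully track the opposing signs on the two components of $\omega_{\mathcal{I}}$ and how $I$ interacts with them when verifying $\omega_{\mathcal{I}}(\alpha,\beta)=g(\alpha,I\beta)$. Once this is settled, the argument runs exactly parallel to the proof of Proposition \ref{der-m}, and no new analytic input is required. The analogous statements for $\omega_{\mathcal{J}}$ and $\omega_{\mathcal{K}}$ should follow by the identical argument with $I$ replaced by $J$ and $K$, using that $J$ and $K$ also preserve $\mathbb{H}^1$.
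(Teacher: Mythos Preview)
Your proposal is correct and rests on the same key ingredients as the paper's proof: Lemmas \ref{der-m1} and \ref{der-m2} to compute the derivative of the harmonic projection, together with the fact that $I$ preserves $\mathbb{H}^1$. The difference is organizational. The paper, after writing $\omega_{\mathcal I}=g(\cdot,\mathcal I\cdot)$, immediately expands in components and carries out a long explicit calculation (the Claim), only at the very end reassembling the surviving terms into $D_1^\ast IZ$ and the components of $D_2 IZ$ to conclude. You instead keep the identity $\omega_{\mathcal I}(\alpha,\beta)=g(\alpha,I\beta)$ at the level of the ambient space $A^1_B(\mathfrak u(E))^{\oplus 2}$ and pass the adjoint across in one step, which bypasses the component computation entirely. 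Your route is shorter and makes the role of ``$I$ preserves $\mathbb H^1$'' transparent; the paper's route has the virtue of being fully explicit and avoids any worry about whether the ambient identity $\omega_{\mathcal I}=g(\cdot,I\cdot)$ interacts correctly with the harmonic projections at nearby points (it does, since both are restrictions of fixed ambient bilinear forms, as you note). Either way the analytic content is identical.
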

\begin{proof}
We give the proof by direct computation.
\begin{equation*}
\begin{split}
X\omega_{\mathcal{I},[(\nabla_h,\Phi)]}(Y,Z)=&\frac{d}{dt}\omega_{\mathcal{I},[(\nabla_h,\Phi)+c(t)]}(\overline Y_{c(t)}, \overline Z_{c(t)})\bigg|_{t=0}\\
=&\frac{d}{dt}g_{[(\nabla_h,\Phi)+c(t)]}(\overline Y_{c(t)},\mathcal{I} \overline Z_{c(t)})\bigg|_{t=0}\\
=&\frac{d}{dt}\int_M\mathrm{Tr}\bigg((H_{(\nabla_h,\Phi)+c(t)}\overline Y_{c(t)})_1\wedge(H_{(\nabla_h,\Phi)+c(t)}\overline Z_{c(t)})_1\bigg)\wedge\eta\bigg|_{t=0}\\
&-\frac{d}{dt}\int_M\mathrm{Tr}\bigg((H_{(\nabla_h,\Phi)+c(t)}\overline Y_{c(t)})_2\wedge(H_{(\nabla_h,\Phi)+c(t)}\overline Z_{c(t)})_2\bigg)\wedge\eta\bigg|_{t=0}\\
=&\int_M\mathrm{Tr}\bigg(\frac{d}{dt}(H_{(\nabla_h,\Phi)+c(t)}\overline Y_{c(t)})_1\bigg|_{t=0}\wedge Z_1\bigg)\wedge\eta+\int_M\mathrm{Tr}\bigg(Y_1\wedge\frac{d}{dt}(H_{(\nabla_h,\Phi)+c(t)}\overline Z_{c(t)})_1\bigg|_{t=0}\bigg)\wedge\eta\\
&-\int_M\mathrm{Tr}\bigg(\frac{d}{dt}(H_{(\nabla_h,\Phi)+c(t)}\overline Y_{c(t)})_2\bigg|_{t=0}\wedge Z_2)-\int_M\mathrm{Tr}\bigg(Y_2\wedge\frac{d}{dt}(H_{(\nabla_h,\Phi)+c(t)}\overline Z_{c(t)})_2\bigg|_{t=0}\bigg)\wedge\eta.\\
\end{split}
\end{equation*}
Here $(H_{(\nabla_h,\Phi)+c(t)}\overline Y_{c(t)})_i$ (resp. $(H_{(\nabla_h,\Phi)+c(t)}\overline Z_{c(t)})_i$) is the $i$-th componet of the $H_{(\nabla_h,\Phi)+c(t)}\overline Y_{c(t)}$ (resp. $H_{(\nabla_h,\Phi)+c(t)}\overline Z_{c(t)}$).\par
The following Claim will give us the proof of the Proposition.
\begin{claim}
\begin{equation*}
\int_M\mathrm{Tr}\bigg(\frac{d}{dt}(H_{(\nabla_h,\Phi)+c(t)}\overline Y_{c(t)})_1\bigg|_{t=0}\wedge Z_1)\wedge\eta-\int_M\mathrm{Tr}\bigg(\frac{d}{dt}(H_{(\nabla_h,\Phi)+c(t)}\overline Y_{c(t)})_2\bigg|_{t=0}\wedge Z_2)\wedge\eta=0.
\end{equation*}
\end{claim}
\begin{proof}
By Lemma \ref{der-m1} and \ref{der-m2}, we have 
\begin{align*}
&\int_M\mathrm{Tr}\bigg(\frac{d}{dt}(H_{(\nabla_h,\Phi)+c(t)}\overline Y_{c(t)})_1\bigg|_{t=0}\wedge Z_1\bigg)\wedge\eta\\
=&\int_M\mathrm{Tr}\bigg(\bigg(\bigg(\frac{d}{dt}H_{(\nabla_h,\Phi)+c(t)}\bigg|_{t=0}\bigg)Y\bigg)_1\wedge Z_1\bigg)\wedge\eta\\
=&\int_M\mathrm{Tr}\bigg(\bigg(-D_1G[X,Y]^2-D^\ast_2G\widetilde{[X,Y]}\bigg)_1\wedge Z_1\bigg)\wedge\eta\\
=&\int_M\mathrm{Tr}\bigg(\big(-\nabla_hG[X,Y]^2-\nabla^\ast_h\big(G\widetilde{[X,Y]}\big)_1-[(\star_\xi\Phi)^\ast]\big(G\widetilde{[X,Y]}\big)_2-[(\star_\xi\Phi)]\big(G\widetilde{[X,Y]}\big)_3\big)\wedge Z_1\bigg)\wedge\eta\\
=&\bigg(-\nabla_hG[X,Y]^2-\nabla^\ast_h\big(G\widetilde{[X,Y]}\big)_1-[(\star_\xi\Phi)^\ast,\big(G\widetilde{[X,Y]}\big)_2]-[\Phi^\ast,\big(G\widetilde{[X,Y]}\big)_3],\star_\xi Z_1\bigg)_{L^2}\\
=&-\bigg(\nabla_hG[X,Y]^2,\star_\xi Z_1\bigg)_{L^2}-\bigg(\nabla^\ast_h\big(G\widetilde{[X,Y]}\big)_1,\star_\xi Z_1\bigg)_{L^2}\\
&-\bigg([(\star_\xi\Phi)^\ast,\big(G\widetilde{[X,Y]}\big)_2],\star_\xi Z_1\bigg)_{L^2}-\bigg([\Phi^\ast,\big(G\widetilde{[X,Y]}\big)_3],\star_\xi Z_1\bigg)_{L^2}\\
=&-\bigg(G[X,Y]^2,\nabla^\ast_h\star_\xi Z_1\bigg)_{L^2}-\bigg(\big(G\widetilde{[X,Y]}\big)_1,\nabla_h\star_\xi Z_1\bigg)_{L^2}\\
&-\bigg(\big(G\widetilde{[X,Y]}\big)_2,[\star_\xi\Phi,\star_\xi Z_1]\bigg)_{L^2}-\bigg(\big(G\widetilde{[X,Y]}\big)_3,[\Phi,\star_\xi Z_1]\bigg)_{L^2}.
\end{align*}
Here $[X,Y]^2$ is the map we defined in Lemma \ref{der-m2}.
We also have 
\begin{align*}
&\int_M\mathrm{Tr}\bigg(\frac{d}{dt}(H_{(\nabla_h,\Phi)+c(t)}\overline Y_{c(t)})_2\bigg|_{t=0}\wedge Z_2)\wedge\eta\\
=&\int_M\mathrm{Tr}\bigg(\bigg(\bigg(\frac{d}{dt}H_{(\nabla_h,\Phi)+c(t)}\bigg|_{t=0}\bigg)Y\bigg)_2\wedge Z_2\bigg)\wedge\eta\\
=&\int_M\mathrm{Tr}\bigg(\bigg(-D_1G[X,Y]^2-D^\ast_2G\widetilde{[X,Y]}\bigg)_2\wedge Z_2\bigg)\wedge\eta\\
=&\int_M\mathrm{Tr}\bigg(\big(-[\Phi,G[X,Y]^2]+[\Phi^\ast,\big(G\widetilde{[X,Y]}\big)_1]+\star_\xi\nabla^\ast_h\big(G\widetilde{[X,Y]}\big)_2-\nabla^\ast_h\big(G\widetilde{[X,Y]}\big)_3\big)\wedge Z_2\bigg)\wedge\eta\\
=&-\bigg(G[X,Y]^2,[\Phi^\ast,\star_\xi Z_2]\bigg)_{L^2}+\bigg(\big(G\widetilde{[X,Y]}\big)_1,[\Phi,\star_\xi Z_2]\bigg)_{L^2}\\
&-\int_M\mathrm{Tr}\bigg(\nabla^\ast_h\big(G\widetilde{[X,Y]}\big)_2\wedge \star_\xi Z_2\bigg)\wedge\eta-\bigg(\big(G\widetilde{[X,Y]}\big)_3,\nabla_h\star_\xi Z_2\bigg)_{L^2}\\
=&-\bigg(G[X,Y]^2,[\Phi^\ast,\star_\xi Z_2]\bigg)_{L^2}+\bigg(\big(G\widetilde{[X,Y]}\big)_1,[\Phi,\star_\xi Z_2]\bigg)_{L^2}\\
&-\bigg(\big(G\widetilde{[X,Y]}\big)_2, \nabla_h\star_\xi\star_\xi Z_2\bigg)_{L^2}-\bigg(\big(G\widetilde{[X,Y]}\big)_3,\nabla_h\star_\xi Z_2\bigg)_{L^2}.
\end{align*}
Hence we have 
\begin{align*}
&\int_M\mathrm{Tr}\bigg(\frac{d}{dt}(H_{(\nabla_h,\Phi)+c(t)}\overline Y_{c(t)})_1\bigg|_{t=0}\wedge Z_1\bigg)\wedge\eta-\int_M\mathrm{Tr}\bigg(\frac{d}{dt}(H_{(\nabla_h,\Phi)+c(t)}\overline Y_{c(t)})_2\bigg|_{t=0}\wedge Z_2\bigg)\wedge\eta\\
=&-\bigg(G[X,Y]^2,\nabla^\ast_h\star_\xi Z_1\bigg)_{L^2}-\bigg(\big(G\widetilde{[X,Y]}\big)_1,\nabla_h\star_\xi Z_1\bigg)_{L^2}\\
&-\bigg(\big(G\widetilde{[X,Y]}\big)_2,[\star_\xi\Phi,\star_\xi Z_1]\bigg)_{L^2}-\bigg(\big(G\widetilde{[X,Y]}\big)_3,[\Phi,\star_\xi Z_1]\bigg)_{L^2}\\
&+\bigg(G[X,Y]^2,[\Phi^\ast,\star_\xi Z_2]\bigg)_{L^2}-\bigg(\big(G\widetilde{[X,Y]}\big)_1,[\Phi,\star_\xi Z_2]\bigg)_{L^2}\\
&-\bigg(\big(G\widetilde{[X,Y]}\big)_2, \nabla_h \star_\xi\star_\xi Z_2\bigg)_{L^2}+\bigg(\big(G\widetilde{[X,Y]}\big)_3,\nabla_h\star_\xi Z_2\bigg)_{L^2}\\
=&-\bigg(\big(G\widetilde{[X,Y]}\big)_2,D^\ast_1IZ\bigg)_{L^2}-\bigg(\big(G\widetilde{[X,Y]}\big)_1,\big(D_2IZ\big)_1\bigg)_{L^2}-\bigg(\big(G\widetilde{[X,Y]}\big)_2,\big(D_2IZ\big)_3\bigg)_{L^2}-\bigg(\big(G\widetilde{[X,Y]}\big)_3,\big(D_2IZ\big)_2\bigg)_{L^2}\\
=&0.
\end{align*}
The last equation holds since $I$ preserves $\mathbb{H}^1$.
\end{proof}
The Proposition follows immediately from the Claim.
\end{proof}
Integrability of $\mathcal{I}$ follows from Proposition \ref{der-m} and \ref{der-k}: These two Propositions show that $\mathcal{I}$ is flat with respect to the Levi-Civita connection of $g$ and hence $\mathcal{I}$ is integrable. Although we only proved for $\mathcal{I}$, we are able to show the integrability of $\mathcal{J}$ and $\mathcal{K}$ in the same way as $\mathcal{I}$. Hence we omit the proof. From the discussion above, we have 
\begin{theorem}\label{main-th}
$(\mathcal{M}^{\mathrm{irr}}_{\mathrm{BaHit}},g,\mathcal{I},\mathcal{J},\mathcal{K})$ is a smooth hyperK\"ahler manifold.
\end{theorem}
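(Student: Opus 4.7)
Corollary~\ref{m-sm} already identifies $\mathcal{M}^{\mathrm{irr}}_{\mathrm{BaHit}}$ as a smooth manifold with $T_{[(\nabla_h,\Phi)]}\mathcal{M}^{\mathrm{irr}}_{\mathrm{BaHit}}\simeq\mathbb{H}^1$, and the almost quaternion structure $(\mathcal{I},\mathcal{J},\mathcal{K})$ compatible with $g$ has been constructed immediately before the theorem. What remains is to check that each $\mathcal{I},\mathcal{J},\mathcal{K}$ is parallel with respect to the Levi--Civita connection $\nabla^{\mathrm{LC}}$ of $g$; once this is known, the Newlander--Nirenberg theorem gives integrability of each and closedness of the associated K\"ahler forms, and the hyper-K\"ahler property follows at once.

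The plan is to exploit the Kuranishi chart $k_{(\nabla_h,\Phi)}\colon\mathcal{M}^{\mathrm{irr}}_{\mathrm{BaHit}}\cap S_{(\nabla_h,\Phi),\epsilon}\to U\subset\mathbb{H}^1$ around an arbitrary point $[(\nabla_h,\Phi)]\in\mathcal{M}^{\mathrm{irr}}_{\mathrm{BaHit}}$. First I would apply Proposition~\ref{der-m}: for the constant vector fields $\overline X,\overline Y,\overline Z$ on the slice obtained by pulling back $X,Y,Z\in\mathbb{H}^1$ along $k_{(\nabla_h,\Phi)}$, one has $X g_{[(\nabla_h,\Phi)]}(Y,Z)=0$ at $\alpha=0$. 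By the Koszul formula this forces all Christoffel symbols of $g$ in the Kuranishi coordinate to vanish at the origin, so that the Kuranishi chart is a normal coordinate for $g$ at $[(\nabla_h,\Phi)]$.

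Next I would combine this with Proposition~\ref{der-k}, which says $X\omega_{\mathcal{I},[(\nabla_h,\Phi)]}(Y,Z)=0$ at the origin. Writing $\omega_{\mathcal{I}}(Y,Z)=g(Y,\mathcal{I}Z)$ and applying Leibniz, the vanishing of $Xg(Y,\cdot)$ at the origin together with non-degeneracy of $g$ yields $(\nabla^{\mathrm{LC}}_X\mathcal{I})|_0=0$ for every $X\in\mathbb{H}^1$. Since $[(\nabla_h,\Phi)]$ was arbitrary, $\nabla^{\mathrm{LC}}\mathcal{I}\equiv 0$ on $\mathcal{M}^{\mathrm{irr}}_{\mathrm{BaHit}}$; equivalently, $\mathcal{I}$ is integrable and $\omega_{\mathcal{I}}$ is closed, so $(\mathcal{M}^{\mathrm{irr}}_{\mathrm{BaHit}},g,\mathcal{I})$ is K\"ahler.

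To finish I would establish the analogues of Proposition~\ref{der-k} for $\omega_{\mathcal{J}}$ and $\omega_{\mathcal{K}}$. The computation is entirely parallel to the one already written for $\omega_{\mathcal{I}}$: expand $X\omega_{\mathcal{J},[(\nabla_h,\Phi)]}(\overline Y_{c(t)},\overline Z_{c(t)})$ at $t=0$ using the curve $c(t)=k_{(\nabla_h,\Phi)}^{-1}(tX)$, insert the expressions $\tfrac{d}{dt}\overline Y_{c(t)}|_{t=0}=-D^\ast_2 G\widetilde{[X,Y]}$ from Lemma~\ref{der-m1} and $\tfrac{d}{dt}H_{(\nabla_h,\Phi)+c(t)}|_{t=0}Y=-D_1G[X,Y]^2-D^\ast_2G\widetilde{[X,Y]}$ from Lemma~\ref{der-m2}, and reorganize the resulting $L^2$-pairings into pairings of $G\widetilde{[X,Y]}$ against the components of $D^\ast_1(JZ)$ and $D_2(JZ)$ (respectively $KZ$). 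Since $J$ and $K$, like $I$, preserve $\mathrm{Ker}\,D^\ast_1\cap\mathrm{Ker}\,D_2$ by the computation carried out around~(\ref{cpx 2}), all such pairings vanish. This is the main technical obstacle: the rearrangement of the pairings is computationally delicate, and one must verify that the identity $\star_\xi\circ\star_\xi=-\mathrm{Id}$ on $A^1_B(M)$ from Proposition~\ref{cpx str} together with Lemma~\ref{bra} is used consistently. Once the analogues of Proposition~\ref{der-k} are in place, the argument of the previous paragraph produces $\nabla^{\mathrm{LC}}\mathcal{J}=\nabla^{\mathrm{LC}}\mathcal{K}=0$, and the theorem follows.
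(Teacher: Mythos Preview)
Your proposal is correct and follows essentially the same approach as the paper: use Proposition~\ref{der-m} to see that the Kuranishi chart is a normal coordinate for $g$, combine this with Proposition~\ref{der-k} to deduce $\nabla^{\mathrm{LC}}\mathcal{I}=0$ at every point, and then repeat the computation of Proposition~\ref{der-k} verbatim with $J$ and $K$ in place of $I$ (the paper likewise omits these parallel computations). Your outline of how the $J$ and $K$ cases reduce to the harmonicity of $JZ$ and $KZ$ is exactly the mechanism behind the Claim inside Proposition~\ref{der-k}.
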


\begin{remark}
Although Theorem \ref{main-th} may be viewed from the perspective of infinite-dimensional hyperK\"ahler reduction,
we present here a direct analytic construction of the hyperK\"ahler structure.
\end{remark}

 \subsection{Dimension of $\mathcal{M}^{\mathrm{irr}}_{\mathrm{BaHit}}$}\label{dim sec}
 In this section, we calculate the dimension of $\mathcal{M}^{\mathrm{irr}}_{\mathrm{BaHit}}$. 
 We calculate it under the assumption of $M$ being regular and $E$ being regular.
We first prove the following proposition.
 
  \begin{proposition}\label{bahit to bahiggs}
 Let $E$ be a basic bundle over $M$ with a basic metric $h$. 
 Let $(\nabla_h,\Phi)\in\mathcal{A}_{\mathrm{BaHit}}$ and let $(\overline\partial_E,\theta)$ be the associated basic Higgs bundle (See Section \ref{Hi-BH}). 
 Then the map
 \begin{equation*}
\begin{array}{rccc}
f\colon &A ^1_B(\mathfrak{u}(E))^{\oplus 2}                    &\longrightarrow& A^1_B(\mathrm{End}E)                     \\
        & \rotatebox{90}{$\in$}&               & \rotatebox{90}{$\in$} \\
        & (\alpha_1,\alpha_2)                   & \longmapsto   & \alpha_1+\sqrt{-1}\alpha_2
\end{array}
 \end{equation*}
 induces an isomorphism
 \begin{equation*}
 f:\mathbb{H}^1\to \mathbb{H}^1_{\mathrm{BaDol}}.
 \end{equation*}
 Here $\mathbb{H}^1$ is the first cohomology of the complex (\ref{cpx}) and  $\mathbb{H}^1_{\mathrm{BaDol}}$ is the first cohomology of the following complex:
 \begin{equation*}
0\longrightarrow A_B(\mathrm{End}E)\overset{\overline\partial_E+\theta}{\longrightarrow} A^1_B(\mathrm{End}E)\overset{\overline\partial_E+\theta}{\longrightarrow} A^2_B(\mathrm{End}E)\longrightarrow 0.
 \end{equation*}
 \end{proposition}
 \begin{proof}
It is enough to show that $f$ induces an isomorphism 
\begin{equation*}
f:\mathrm{Ker}D^\ast_1\cap\mathrm{Ker}D_2\to\mathrm{Ker}(\overline\partial_E+\theta)^\ast\cap\mathrm{Ker}(\overline\partial_E+\theta).
\end{equation*}
Here $(\overline\partial_E+\theta)^\ast$ is the $L^2$ adjoint of $\overline\partial_E+\theta$.\par
Let $(\alpha_1,\alpha_2)\in A ^1_B(\mathfrak{u}(E))^{\oplus 2}$. We assume that $(\alpha_1,\alpha_2)\in\mathrm{Ker}D^\ast_1\cap\mathrm{Ker}D_2$. We first show that $f(\alpha_1,\alpha_2)=\alpha_1+\sqrt{-1}\alpha_2\in\mathrm{Ker}(\overline\partial_E+\theta)^\ast\cap\mathrm{Ker}(\overline\partial_E+\theta)$. By (\ref{cpx 1}) and (\ref{dual D1}), we have
\begin{align}
\nabla_h\alpha_1-[\Phi,\alpha_2]&=0,\label{eq1}\\
\nabla_h\alpha_2+[\alpha_1,\Phi]&=0,\label{eq2}\\
\nabla_h\star_\xi \alpha_2+[\alpha_1,\star_\xi\Phi]&=0,\label{eq3}\\
\nabla_h\star_\xi\alpha_1+[\Phi,\star_\xi\alpha_2]&=0\label{eq4}.
\end{align}
Note that from Lemma \ref{bra}, (\ref{eq3}) is equivalent to 
\begin{equation}\label{eq3-1}
\nabla_h\star_\xi \alpha_2-[\star_\xi\alpha_1,\Phi]=0.
\end{equation}
Since $\star_\xi|_{A^{1,0}_B(M)}=-\sqrt{-1}\mathrm{Id}_{A^{1,0}_B(M)}$ and $\star_\xi|_{A^{0,1}_B(M)}=\sqrt{-1}\mathrm{Id}_{A^{0,1}_B(M)}$, by calculating $(\ref{eq1})+\sqrt{-1}(\ref{eq4})$ and $(\ref{eq2})+\sqrt{-1}(\ref{eq3-1})$ we have 
\begin{align*}
\nabla^{0,1}_h\alpha^{1,0}_1-[\Phi^{1,0},\alpha^{0,1}_2]&=0,\\
\nabla^{0,1}_{h}\alpha^{1,0}_2+[\alpha^{0,1}_1,\Phi^{1,0}]&=0.
\end{align*}
Since $\overline\partial_E=\nabla^{0,1}_h$ and $\theta=\sqrt{-1}\Phi^{1,0}$ (See Section \ref{Hi-BH}), they show $\alpha_1+\sqrt{-1}\alpha_2\in \mathrm{Ker}(\overline\partial_E+\theta)$. By using a similar argument, we can also show that $\alpha_1+\sqrt{-1}\alpha_2\in \mathrm{Ker}(\overline\partial_E+\theta)^\ast$. Hence $f(\mathrm{Ker}D^\ast_1\cap\mathrm{Ker}D_2)\subset\mathrm{Ker}(\overline\partial_E+\theta)^\ast\cap\mathrm{Ker}(\overline\partial_E+\theta)$ holds. We now construct the inverse of $f$ and prove the claim. Let 
 \begin{equation*}
\begin{array}{rccc}
g\colon & A^1_B(\mathrm{End}E)                   &\longrightarrow& A ^1_B(\mathfrak{u}(E))^{\oplus 2}                     \\
        & \rotatebox{90}{$\in$}&               & \rotatebox{90}{$\in$} \\
        & A                 & \longmapsto   & (\frac{A-A^\dagger_h}{2},-\sqrt{-1}\frac{A+A^\dagger_{h}}{2})
\end{array}.
 \end{equation*}
Here $A^\dagger_h$ is the formal adjoint of $A$ with respect to $h$. 
It is straightforward to check that $f\cdot g=\mathrm{Id}$ and $g\cdot f=\mathrm{Id}$ holds. Hence, it is enough to show that  $g(\mathrm{Ker}(\overline\partial_E+\theta)^\ast\cap\mathrm{Ker}(\overline\partial_E+\theta))\subset\mathrm{Ker}D^\ast_1\cap\mathrm{Ker}D_2$ to prove the claim.\par
Let $A\in \mathrm{Ker}(\overline\partial_E+\theta)^\ast\cap\mathrm{Ker}(\overline\partial_E+\theta)$. 
From \cite{BH}, we have $(\overline\partial_E+\theta)^\ast=(\nabla^{0,1}_h+\sqrt{-1}\Phi^{1,0})^\ast=\sqrt{-1}[\Lambda,\nabla^{1,0}_h+\sqrt{-1}\Phi^{0,1}]$. Since $A_B^{2,0}(M)=A^{0,2}_B(M)=0$, $A$ satisfies the following equations
\begin{align}
\nabla_h^{0,1}A^{1,0}+\sqrt{-1}[\Phi^{1,0},A^{0,1}]&=0,\label{eq5}\\
\Lambda(\nabla_h^{1,0}A^{0,1}+\sqrt{-1}[\Phi^{0,1},A^{1,0}])&=0.
\end{align}
We wedge $d\eta\mathrm{Id}_E$ to the second equation and we obtain
\begin{equation}\label{eq6}
\nabla_h^{1,0}A^{0,1}+\sqrt{-1}[\Phi^{0,1},A^{1,0}]=0.
\end{equation}
We take the formal adjoint of (\ref{eq5}) and (\ref{eq6}) with respect to $h$ and obtain
\begin{align}
\nabla_h^{1,0}(A^{1,0})^\dagger_h-\sqrt{-1}[\Phi^{0,1},(A^{0,1})^\dagger_h]&=0,\label{eq7}\\
\nabla_h^{0,1}(A^{0,1})^\dagger_h-\sqrt{-1}[\Phi^{1,0},(A^{1,0})^\dagger_h]&=0 \label{eq8}.
\end{align}
We now prove that $g(A)\in \mathrm{Ker}D^\ast_1\cap\mathrm{Ker}D_2$. 
We only prove that $g(A)=(\frac{A-A^\dagger_h}{2},-\sqrt{-1}\frac{A+A^\dagger_{h}}{2})$ satisfies (\ref{eq1}). The other can be proved by using the same argument below.
\begin{align*}
&\nabla_h\bigg(\frac{A-A^\dagger_h}{2}\bigg)+\sqrt{-1}\bigg[\Phi,\frac{A+A^\dagger_{h}}{2}\bigg]\\
=&\nabla^{1,0}_h\bigg(\frac{A^{0,1}-(A^{1,0})^\dagger_h}{2}\bigg)+\nabla^{0,1}_h\bigg(\frac{A^{1,0}-(A^{0,1})^\dagger_h}{2}\bigg)+\sqrt{-1}\bigg[\Phi^{1,0},\frac{A^{0,1}+(A^{1,0})^\dagger_{h}}{2}\bigg]+\sqrt{-1}\bigg[\Phi^{0,1},\frac{A^{1,0}+(A^{0,1})^\dagger_{h}}{2}\bigg]\\
=&0.
\end{align*}
The last equation follows from (\ref{eq5}), (\ref{eq6}), (\ref{eq7}) and (\ref{eq8}).
  \end{proof}
From now on, we assume that $(M,(T^{1,0},S,I),(\eta,\xi))$ and $E$ are regular.
See Section \ref{Sasa} for the definition of regularity of $M$ and Section \ref{reg bun} for the definition of regularity of $E$.\par
Let $(E,\overline\partial_E,\theta)$ be a basic Higgs bundle.
 We say that it is a regular basic Higgs bundle if $E$ is regular. 
 We recall that there is a one-on-one correspondence between a regular basic Higgs bundle on $M$ and a Higgs bundle over $M/S^1$ following \cite{BH2}. 
 Note that since $M$ is regular, $M/S^1$ is a Riemann surface.
  From now on, we assume the genus of $M/S^1$ is at least 2.\par

We first review the construction of a regular basic Higgs bundle over $M$ from a Higgs bundle over $M/S^1$. Let $(\widetilde{E},\overline\partial_{\widetilde{E}},\widetilde{\theta})$ be a Higgs bundle over $M/S^1$. Let $\{U_\alpha\}_{\alpha\in A}$ be a open covering of $M/S^1$. We assume that $\widetilde{E}$ is trivialized over each $U_\alpha$. Then we have a family of holomorphic transition function $\widetilde{g}_{\alpha\beta}:U_\alpha\cap U_\beta\to GL(r,\mathbb{C})$ such that it satisfies the 1-cocycle condition. Since $M$ is the total space of a $S^1$-bundle over $M/S^1$, we can regard $\{U_\alpha\times S^1\}_{\alpha\in A}$ as an open covering of $M$. We define a family of maps $g_{\alpha\beta}:U_\alpha\times S^1\cap U_\beta\times S^1\to GL(r,\mathbb{C})$ as $g_{\alpha\beta}(x,t):=\widetilde{g}_{\alpha\beta}(x)$. This family defines a vector bundle $E$ over $M$ since it satisfies the 1-cocycle condition. Since $E$ is trivialized over each $U_\alpha\times S^1$, $E$ is regular and since the transition function is constant along the $S^1$-action, $E$ is basic and finally, since $\widetilde{g}_{\alpha\beta}$ is holomorphic, $E$ is basic holomorphic. We can also show that $\overline\partial_{\widetilde{E}}$ induces a basic holomorphic structure $\overline\partial_{E}$ as follows: We assume that $\overline\partial_{\widetilde{E}}|_{U_\alpha}=\overline\partial+\widetilde{A}_\alpha$ where $\widetilde{A}_\alpha\in A^{0,1}(\mathfrak{gl}(r,\mathbb{C}))$. Then $\widetilde{A}_\beta=\widetilde{g}_{\alpha\beta}^{-1}\widetilde{A}_\alpha \widetilde{g}_{\alpha\beta}+\widetilde{g}_{\alpha\beta}^{-1}\overline\partial \widetilde{g}_{\alpha\beta}$ holds. We define $A_\alpha\in A^{0,1}_{B}(\mathfrak{gl}(r,\mathbb{C}))$ as $A_\alpha(x,t):=\widetilde{A}_\alpha(x)$. This satisfies $A_\beta=g_{\alpha\beta}^{-1}A_\alpha g_{\alpha\beta}+g_{\alpha\beta}^{-1}\overline\partial_\xi g_{\alpha\beta}$. Here $\overline\partial_\xi$ is the $(0,1)$-part of $d|_{A^{\bullet}_B(M)}$. Hence $\{A_{\alpha}\}_{\alpha\in A}$ defines a $(0,1)$-differential operator $\overline\partial_E$ and hence a basic holomorphic structure on $E$. We can also show that $\widetilde{\theta}$ induces a basic Higgs field $\theta$ by using a similar argument.\par

We next review the converse construction. Let $(E,\overline\partial_E,\theta)$ be a regular basic Higgs bundle over $M$. Since $M$ is the total space of a $S^1$-bundle over $M/S^1$ there exist an open cover of $\{ U_\alpha\}_{\alpha\in A}$ of $M/S^1$ such that $\{ U_\alpha\times S^1\}_{\alpha\in A}$ is an open cover of $M$. Since $E$ is regular, we may assume $E$ is trivialized over each $\{ U_\alpha\times S^1\}_{\alpha\in A}$ after shrinking $U_\alpha$ appropriately. Since $E$ is basic, the transition function $g_{\alpha\beta}:U_\alpha\times S^1\cap U_\beta\times S^1\to GL(r,\mathbb{C})$ of $E$ is constant along $S^1$. Hence $g_{\alpha\beta}$ reduces to the function on $U_\alpha\cap U_\beta$ and defines a vector bundle $\widetilde{E}$ on $M/S^1$. We can use a similar argument above to show that $\overline\partial _E$ and $\theta$ reduces to $\widetilde{E}$ and 
define a holomorphic structure $\overline\partial_{\widetilde{E}}$ and a Higgs field $\widetilde{\theta}$ on $\widetilde{E}$. \par
We now assume $E$ is regular. We show that there exists a one-on-one correspondence between the space of basic sections $A_B(E)$ over $M$ and smooth sections $A(\widetilde{E})$ over $M/S^1$.\par
Let $s\in A_B(E)$. Then $s_\alpha:=s|_{U_\alpha}=(s_{\alpha,1},\dots,s_{\alpha,r}):U_\alpha\times S^1\to \mathbb{C}^r$ is a basic function. Hence $s_\alpha$ reduces to a function $\widetilde{s}_\alpha:U_\alpha\to \mathbb{C}^r$. We can glue $\{\widetilde{s}_\alpha\}_{\alpha\in A}$ and define a smooth section $s$ over $\widetilde{E}$. Conversely, let $\widetilde{s}\in A(\widetilde{E})$. Then $\widetilde{s}_\alpha:= \widetilde{s}|_{U_\alpha}=(\widetilde{s}_{\alpha,1},\dots,\widetilde{s}_{\alpha,r}):U_\alpha \to \mathbb{C}^r$ is a smooth function. We define $s_{\alpha}:U_\alpha\times S^1 \to \mathbb{C}^r$ as $s_{\alpha}(x,t):=\widetilde{s}_\alpha(x)$. We can glue $\{s_\alpha\}_{\alpha\in A}$ and define a smooth section $s$ over $E$. Since $s_\alpha$ is constant along $S^1$, $s$ is basic. We define linear maps
\begin{align*}
&p:A_B(E)\to A(\widetilde{E}),\\
&q:A(\widetilde{E})\to A_B(E)
\end{align*}
as $p(s):=\widetilde{s}$ and $q(\widetilde{s}):=s$. 
$p\circ q=q\circ p=Id$ is clear from the construction.\par

\begin{proposition}\label{bahiggs to higgs}
Let $(E,\overline\partial_E,\theta)$ be a regular basic Higgs bundle over $M$ and $(\widetilde{E},\overline\partial_{\widetilde{E}},\widetilde{\theta})$ be the induced Higgs bundle over $M/S^1$. Then p,q induces a morphism between complexes
\begin{align*}
&p:(A^\bullet_B(E),\overline\partial_E+\theta)\to (A(\widetilde{E}),\overline\partial_{\widetilde{E}}+\widetilde{\theta}),\\
&q:(A(\widetilde{E}), \overline\partial_{\widetilde{E}}+\widetilde{\theta} )\to (A^\bullet_B(E),\overline\partial_E+\theta).
\end{align*}
Since $p\circ q=q\circ p=Id$, $p$ and $q$ induce an isomorphism between the cohomologies. In particular, the dimensions of the cohomologies of the two complexes are the same.
\end{proposition}
\begin{proof}
This is clear from the construction of $p,q$ and $(\widetilde{E},\overline\partial_{\widetilde{E}},\widetilde{\theta})$.
\end{proof}
Let $h$ be a basic Hermitian metric and let $(\nabla_h,\Phi)\in\mathcal{A}^{\mathrm{irr}}_{\mathrm{BaHit}}$. 
Then $(E,\overline\partial_E:=\nabla^{0,1}_h, \theta:=\sqrt{-1}\Phi^{1,0})$ is a regular stable Higgs bundle (See Section \ref{Hi-BH}). 
Since $h$ is basic and $E$ is regular, we can show that $h$ induces a metric $\widetilde{h}$ on $\widetilde{E}$ by using the trivialization above. 
It is clear from the construction that $\widetilde{h}$ is a harmonic metric for  $(\widetilde{E},\overline\partial_{\widetilde{E}},\widetilde{\theta})$. 
Hence $(\widetilde{E},\overline\partial_{\widetilde{E}},\widetilde{\theta})$  is polystable and degree 0. Assume that $(\widetilde{E},\overline\partial_{\widetilde{E}},\widetilde{\theta})$ is not stable. Then by \cite[Proposition 3.3]{S1}, there exists a sub Higgs bundle $\widetilde{V}\subset\widetilde{E}$ such that  $(\widetilde{E},\overline\partial_{\widetilde{E}},\widetilde{\theta})=(\widetilde{V},\overline\partial_{\widetilde{V}},\widetilde{\theta}_V)\oplus(\widetilde{V}^\perp,\overline\partial_{\widetilde{V}^\perp},\widetilde{\theta}_{\widetilde{V}^\perp})$ holds and both Higgs bundles are stable and degree 0. 
Here $\widetilde{V}^\perp$ is the orthogonal bundle of $\widetilde{V}$. 
By applying the above procedure to $\widetilde{V}$, we obtain a sub-Higgs bundle $V\subset E$. 
The harmonic metric $\widetilde{h}|_{\widetilde{V}}$ of $\widetilde{V}$ induces a harmonic metric $h_V$ on $V$. Hence $(V,\overline\partial_V,\theta_V)$ is  degree 0. 
This contradicts to the stability of $(E,\overline\partial_E, \theta)$ and hence $(\widetilde{E},\overline\partial_{\widetilde{E}},\widetilde{\theta})$ is stable.\par
Let $\mathbb{H}_{\mathrm{Dol}}^1$ be the first cohomology of the folllowing complex
\begin{equation*}
0\longrightarrow A(\mathrm{End}\widetilde{E})\overset{\overline\partial_{\mathrm{End}\widetilde{E}}+\widetilde{\theta}}{\longrightarrow} A^1(\mathrm{End}\widetilde{E})\overset{\overline\partial_{\mathrm{End}\widetilde{E}}+\widetilde{\theta}}{\longrightarrow} A^2(\mathrm{End}\widetilde{E})\longrightarrow 0.
\end{equation*}
 Since $E$ is regular, the dual $E^\vee$ is also regular, and so is $\mathrm{End}E$. 
 We can apply Proposition \ref{bahiggs to higgs} to $\mathrm{End}E$ and obtain $\mathbb{H}_{\mathrm{BaDol}}^1\simeq \mathbb{H}_{\mathrm{Dol}}^1$.  
 By \cite{N}, $\mathrm{dim}_{\mathbb{C}}\mathbb{H}^1_{\mathrm{Dol}}=2(\mathrm{rk}\widetilde{E})^2(g-1)+2$. 
 Then combining Proposition \ref{bahit to bahiggs} and \ref{bahiggs to higgs}, we obtain 
\begin{theorem}\label{dim}
Let $(M,(T^{1,0},S,I),(\eta,\xi))$ be a regular Sasakian threefold. Let $E$ be a regular basic bundle and $h$ be a basic Hermitian metric. Let $g$ be the genus of $M/S^1$.
Then 
\begin{equation*}
 \mathrm{dim}_{\mathbb{R}}\mathcal{M}^{\mathrm{irr}}_{\mathrm{BaHit}}=4(\mathrm{rk}E)^2(g-1)+4.
 \end{equation*}
\end{theorem}
 \section{Appendix}\label{sec 5}
 \subsection{Degree of Basic Bundles}
 Let $(M,(T^{1,0},S,I),(\eta,\xi))$ be a compact Sasakian manifold of dimension $2n+1$.
 Let $E$ be a basic bundle and $D$ be a basic connection. 
 Let $F_D$ be the curvature of $D$.
  Since $E$ and $D$ are basic, $F_D\in A^2_B(\mathrm{End}E).$ 
  For any $0 \leqslant i \leqslant n$, we define $c_{i,B}(E,D)\in A^{2i}_B(M)$ by
   \begin{equation*}
   \mathrm{det}\bigg(\mathrm{Id}_E-\frac{F_D}{2\pi\sqrt{-1}}\bigg)=1+\sum^{2n}_{i=1} c_{i,B}(E,D).
      \end{equation*}
   Then, as the case of the usual Chern-Weil theory, the cohomology class,
   \begin{equation*}
   c_{i,B}(E)\in H^{2i}_B(M)
      \end{equation*}
    of each $c_{i,B}(E,D)$ is independent of the choice of a basic connection $D$.\par
    We define the \textit{degree} of $E$ as
   \begin{equation*}
   \mathrm{deg}(E):=\frac{1}{2\pi\sqrt{-1}}\int_M\mathrm{Tr}(\Lambda F_D)(d\eta)^n\wedge \eta.
   \end{equation*}
   We also have 
 \begin{equation*}
  \mathrm{deg}(E)=\int_Mc_{1,B}(M)\wedge (d\eta)^{n-1}\wedge\eta.
   \end{equation*}
 Hence $\mathrm{deg}(E)$ only depends on $E$.
 We define the slope of $E$ as 
 \begin{equation*}
 \mu(E):=\frac{\mathrm{deg} (E)}{\mathrm{rank} E}.
 \end{equation*}
 \subsection{Basic Higgs Bundles}
  Throughout this section, let $(M,(T^{1,0},S,I),(\eta,\xi))$ be a compact Sasakian manifold.\par
  Let $E$ be a basic vector bundle over $M$. We say that $E$ is transverse holomorphic if there exists a local trivialization $\{U_\alpha\}_{\alpha\in A}$ of $E$ such that the associated transition function
 $g_{\alpha\beta}:U_\alpha\cap U_\beta\to GL_r(\mathbb{C})$ is basic and holomorphic (i.e. $i_\xi dg_{\alpha\beta}=0$ and $\overline\partial_\xi g_{\alpha\beta}=0$).
    For a transversely holomorphic vector bundle $E$ over $M$, we define the Dolbeault operator 
    \begin{equation*}
    \overline\partial_E:A_B(E)\to A^{0,1}_B(E)
    \end{equation*}
    \begin{equation*}
    \overline\partial_E|_{U_\alpha}:=\overline\partial_\xi.
   \end{equation*}
    This is well defined since the transition function is holomorphic and satisfies $ \overline\partial_E \overline\partial_E=0$. It is canonically extended to $\overline\partial_E:A^{p,q}_B(E)\to A^{p,q+1}_B(E)$ and satisfies the Leibniz rule:
    \begin{equation*}
    \overline\partial_E(\omega\wedge s)=\overline\partial_\xi\omega\wedge s+(-1)^{p+q}\omega\wedge\overline\partial_Es.
        \end{equation*}
        Conversely, if we have an operator $\overline\partial_E:A^{p,q}_B(E)\to A^{p,q+1}_B(E)$ such that it satisfies $\overline\partial_E \overline\partial_E=0$ and the Leibniz rule, $\overline\partial_E$ defines a transverse holomorphic structure by the Frobenius theorem (\cite{Ko}).
    \begin{definition}
   Let $(M,(T^{1,0},S,I),(\eta,\xi))$ be a compact Sasakian manifold. 
   A basic Higgs bundle $(E,\overline\partial_E,\theta)$ over $X$ is a pair such that 
  \begin{itemize}
  \item $E$ is basic and $(E, \overline\partial_E)$ is a transverse holomorphic bundle.
  \item $\theta\in A^{1,0}_B(\mathrm{End}E)$, $\overline\partial_E\theta=0$, and $\theta\wedge\theta=0$.
  \end{itemize}
  We call $\theta$ a Higgs field.
 \end{definition}
 Let $(E,\overline\partial_E,\theta)$ be a basic Higgs bundle on $M$ and $h$ be a basic hermitian metric.\par
  We define a connection $\nabla_h:A(E)\to A^1(E)$ as follows: Let $e_{1,\alpha},\dots, e_{r,\alpha}$ be a local holomorphic frame of $E$ on $U_\alpha$ and $H_\alpha:=(h(e_{i,\alpha},e_{j,\alpha})_{1 \leqslant i,j  \leqslant r})$. 
 We define 
 \begin{equation*}
 \nabla_h|_{U_\alpha}:=d+H^{-1}_\alpha\partial_\xi H_{\alpha}.
 \end{equation*}
 This is well defined, and since $h$ is basic, $\nabla_h$ is a basic connection. 
 $\nabla_h$ is also a $h$-unitary connection. Note that $\nabla^{0,1}_h=\overline\partial_E.$\par
 Let $\theta^\dagger_h$ be the formal adjoint of $\theta$: 
  For every section $u,v\in A(E)$,
 \begin{equation*}
 h(\theta u,v)=h(u, \theta^\dagger_h v)
 \end{equation*}
 holds.
 We define a connection $D_h:=\nabla_h+\theta+\theta^\dagger_h$. This is a basic connection. Let $F_{D_h}$ be the curvature of $D_h$. We say that $h$ is \textit{Hermitie-Einstein} if 
 \begin{equation*}
 \Lambda F_{D_h}^\perp=0.
 \end{equation*}
 Here $\Lambda F_{D_h}^\perp$ is the trace-free part of $F_{D_h}$.  
 \par
 The existence of Hermitie-Einstein metric is related to the stability of the Higgs bundle.
 We now recall the them following \cite{BHa, BS}. \par
 Let  $(E,\overline\partial_E,\theta)$ be a basic Higgs bundle on $M$.
  Let $\mathcal{O}_{B}$ be the sheaf of basic holomorphic functions and $\mathcal{O}_B(E)$ be the sheaf of basic holomorphic sections of $E$. A \textit{sub Higgs sheaf} of $(E,\overline\partial_E,\theta)$ is a coherent $\mathcal{O}_{B}$-subsheaf $\mathcal{V}$ of  $\mathcal{O}_B(E)$ such that $\theta(\mathcal{V})\subset \mathcal{V}\otimes\Omega^1_B$. 
 Here $\Omega^1_B$ is the sheaf of basic holomorphic 1-form. By \cite{BHa}, if $\mathrm{rk}\mathcal{V}<\mathrm{rk}E$ and $\mathcal{O}_B(E)/\mathcal{V}$ is torsion-free, then there is a transversely analytic sub-variety $S\subset M$ of complex co-dimension at least 2 such that $\mathcal{V}|_{M\backslash S}$ is a transverse holomorphic bundle on $M\backslash S$. 
 We define the degree of $\mathcal{V}$ as the degree of $\mathcal{V}|_{M\backslash S}.$
 \begin{definition}
 A basic Higgs bundle $(E,\overline\partial_E,\theta)$ is \textit{stable} if 
 \begin{itemize}
 \item $E$ admits a basic Hermitian metric $h$.
 \item For every sub-Higgs sheaf $\mathcal{V}\subset \mathcal{O}_B(E)$ such that $\mathrm{rk}\mathcal{V}<\mathrm{rk}E$ and $\mathcal{O}_B(E)/\mathcal{V}$ is torsion-free,
 \begin{equation*}
 \frac{\mathrm{deg}(\mathcal{V})}{\mathrm{rk}\mathcal{V}}<\frac{\mathrm{deg}(E)}{\mathrm{rk}E}.
  \end{equation*}
  holds.
  \end{itemize}
  We say that $(E,\overline\partial_E,\theta)$ is \textit{polystable} if 
  \begin{equation*}
 (E,\overline\partial_E,\theta)=\bigoplus_i(E_i,\overline\partial_{E_i},\theta_i)
  \end{equation*}
  where each $(E_i,\overline\partial_{E_i},\theta_i)$ is stable and 
  \begin{equation*}
   \frac{\mathrm{deg}(E)}{\mathrm{rk}E}=\frac{\mathrm{deg}(E_i)}{\mathrm{rk}E_i}.
     \end{equation*}
 \end{definition}
 The following theorem is the Kobayashi-Hitchin correspondence on Sasakian manifolds.
 The K\"ahler case was proved in \cite{S1, US}.
 \begin{proposition}[{\cite[Theorem 5.2, Proposition 5.3.]{BH}}]\label{BH KH1}
 For a stable basic Higgs bundle $(E,\overline\partial_E,\theta)$ over a compact Sasakian manifold  $(M,(T^{1,0},S,I),(\eta,\xi))$, there exsit a basic Hermitian metric $h$ such that $D_h$ satisfies 
 \begin{equation*}
  \Lambda F_{D_h}^\perp=0.
 \end{equation*}
Note that $h$ is a Hermite-Einstein metric.\par
Moreover, if $c_{1,B}(E)=c_{2,B}(E)=0$, then $D_h$ is flat (i.e. $F_{D_h}=0$).
\end{proposition}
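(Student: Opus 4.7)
The plan is to adapt the Donaldson-Uhlenbeck-Yau-Simpson strategy for stable Higgs bundles to the basic/transverse-K\"ahler setting of a Sasakian manifold. Two natural routes present themselves. The first is to perturb the Sasakian structure to a quasi-regular one, so that $M$ becomes the total space of a Seifert $S^1$-bundle over a K\"ahler orbifold $X$; basic Higgs bundles on $M$ correspond to orbifold Higgs bundles on $X$, with basic stability matching orbifold stability, and one may invoke the orbifold version of the Hitchin-Kobayashi correspondence and pull the resulting Hermite-Einstein metric back to a basic metric on $M$. The second route is to work intrinsically on $M$, using the basic K\"ahler identities developed in Section \ref{b-pre} together with the basic Sobolev theory of \cite{BHa, KLW} to transcribe Simpson's argument.

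In the intrinsic route, I would work with the space of basic Hermitian metrics on $E$ modulo positive scaling and consider the Donaldson heat flow
\[
h_t^{-1}\partial_t h_t \;=\; -\sqrt{-1}\bigl(\Lambda F_{D_{h_t}} - \mu\,\mathrm{Id}_E\bigr),
\]
where $\mu$ is the constant determined by $\mathrm{deg}(E)$ and $\mathrm{rk}(E)$. Because $\partial_\xi$, $\overline\partial_\xi$, and $\Lambda$ act naturally on basic forms, with adjoints of the shape recorded in Lemma \ref{f-ad}, both the flow and the associated Donaldson functional preserve basicness. Short-time existence follows from parabolic regularity for transversally elliptic operators, and long-time existence from a maximum-principle bound on $\mathrm{tr}(h_0^{-1}h_t)$. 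The Simpson dichotomy then implies that either the flow converges to a basic Hermite-Einstein metric, or one can extract a destabilizing sub-Higgs sheaf in the sense of the definition preceding the Proposition; the stability hypothesis rules out the latter alternative.

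For the flatness claim, having produced a basic Hermite-Einstein metric $h$, I would compute $\int_M |F_{D_h}^\perp|^2\,(d\eta)^{n-1}\wedge\eta$ via the basic Chern-Weil formula and a transverse Bogomolov--Gieseker identity, which expresses it as a positive multiple of the integral of $\bigl(2r\,c_{2,B}(E) - (r-1)\,c_{1,B}(E)^2\bigr)\wedge(d\eta)^{n-2}\wedge\eta$ (with the low-dimensional case $n=1$ treated directly, since any basic $2$-form on a Sasakian three-fold is a function times $d\eta$). Vanishing of $c_{1,B}(E)$ and $c_{2,B}(E)$ thus forces $F_{D_h}^\perp = 0$. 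The scalar part $\Lambda F_{D_h} = \mu\,\mathrm{Id}_E$ is tied to $\mathrm{deg}(E)$, which vanishes when $c_{1,B}(E)=0$; hence $\mu = 0$ and we conclude $F_{D_h} = 0$.

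The principal obstacle is the analytic content of the second paragraph: one must verify that Simpson's estimates and compactness arguments, which rely on standard elliptic regularity on K\"ahler manifolds, survive the passage to the transversally elliptic basic operators on a Sasakian manifold, in particular controlling uniformity along the Reeb direction. In the orbifold route the analogous difficulty is the careful handling of the orbifold loci of $X$, which is by now standard but still technically demanding.
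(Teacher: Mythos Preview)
The paper does not give a proof of this proposition: it is stated with a citation to \cite[Theorem 5.2, Proposition 5.3]{BH} and used as a black box. There is therefore no argument in the paper to compare your proposal against.

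That said, your sketch is a reasonable outline of the strategy one expects behind the cited result. The intrinsic route you describe --- running the Donaldson heat flow in the space of basic metrics and invoking the transverse K\"ahler identities and basic Sobolev theory --- is essentially the content of the foliated Hitchin--Kobayashi correspondence of \cite{BHa}, which the paper also cites; your quasi-regular/orbifold route is another standard path. Your identification of the principal obstacle (porting Simpson's elliptic estimates to the transversally elliptic setting, with control in the Reeb direction) is accurate, and is precisely the technical work carried out in \cite{BHa} and \cite{BH}. The flatness argument via the transverse Bogomolov--Gieseker identity is also the expected one.

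In short: your proposal is a correct high-level plan, but since the paper defers entirely to \cite{BH} for the proof, the appropriate response here is simply to cite that reference rather than to reprove it.
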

If we assume some conditions for the degree of the bundle, we have the converse.
\begin{proposition}[{\cite[Theorem 4.7.]{BHa}},{\cite[Proposition 7.1.]{BH}}]\label{BH KH2}
 Let $(E,\overline\partial_E,\theta)$ be a basic Higgs bundle over a compact Sasakian manifold  $(M,(T^{1,0},S,I),(\eta,\xi))$ with  $\mathrm{deg}(E)=0$. 
Suppose that $h$ is a basic Hermitian metric on $E$ with $  \Lambda F_{D_h}=0$. 
Then $(E,\overline\partial_E,\theta)$ is a direct sum of stable basic Higgs bundles of degree 0.
\end{proposition}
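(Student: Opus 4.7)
The plan is to adapt the classical Hitchin--Kobayashi correspondence to the basic setting on a compact Sasakian manifold. The key tool is a Chern--Weil-type degree formula for sub-Higgs sheaves, combined with the full Hermite--Einstein condition $\Lambda F_{D_h}=0$.

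First, I would show that every sub-Higgs sheaf $\mathcal{V}\subset\mathcal{O}_B(E)$ with $\mathrm{rk}\mathcal{V}<\mathrm{rk}E$ and torsion-free quotient satisfies $\mathrm{deg}(\mathcal{V})\leq 0$. As recalled in the text, $\mathcal{V}|_{M\setminus S}$ is a transverse holomorphic sub-bundle of $E|_{M\setminus S}$ for some transversely analytic subvariety $S$ of complex codimension at least $2$. Using the basic Hermitian metric $h$, define the orthogonal projection $\pi:E\to E$ onto $\mathcal{V}$ on $M\setminus S$. This projection satisfies $\pi^2=\pi$, $\pi_h^\dagger=\pi$, and $(\mathrm{Id}-\pi)\overline\partial_E\pi=0$; it is a weakly holomorphic sub-bundle projection in the basic sense and extends as an $L^2_1$ endomorphism across $S$.

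Next, I would establish a basic Chern--Weil identity
\begin{equation*}
\mathrm{deg}(\mathcal{V})=\int_M\mathrm{Tr}\bigl(\pi\cdot\sqrt{-1}\Lambda F_{D_h}\bigr)\,\eta\wedge(d\eta)^{n-1}-\|\overline\partial_E\pi\|^2_{L^2}-\|[\theta,\pi]\|^2_{L^2},
\end{equation*}
by integration by parts using the transverse Hodge theory of Section \ref{b-pre} and the Leibniz rule for $D_h=\nabla_h+\theta+\theta_h^\dagger$. The $\theta$-invariance of $\mathcal{V}$ enters through the commutator term. Since $\Lambda F_{D_h}=0$ by hypothesis, the curvature term vanishes and we obtain $\mathrm{deg}(\mathcal{V})\leq 0$, which is semistability.

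To upgrade semistability to polystability, suppose $\mathcal{V}$ is a sub-Higgs sheaf of slope zero (which exists unless $(E,\overline\partial_E,\theta)$ is already stable). Equality in the degree formula forces $\overline\partial_E\pi=0$ and $[\theta,\pi]=0$; that is, $\pi$ is a $\overline\partial_E$-holomorphic and $\theta$-invariant projection. Consequently $E=\mathcal{V}\oplus\mathcal{V}^\perp$ splits as basic Higgs bundles, both summands of degree zero and inheriting a Hermite--Einstein metric. Induction on the rank then produces the desired decomposition into stable basic Higgs bundles of degree zero.

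The main obstacle is analytic: one must justify that the weakly holomorphic projection $\pi$ on $M\setminus S$ extends across $S$ with enough regularity (e.g.\ as an $L^2_1$ endomorphism of $E$) for the Chern--Weil identity to make sense, and that $\overline\partial_E\pi=0$ distributionally forces $\pi$ to define a genuine basic holomorphic sub-bundle. In the K\"ahler case this is the Uhlenbeck--Yau regularity theorem; adapting it to the basic/Sasakian context requires the transverse elliptic regularity developed in \cite{BHa,BH}, together with the fact that $\mathrm{codim}_{\mathbb{C}}S\geq 2$, which ensures integrals over $M\setminus S$ agree with integrals over $M$.
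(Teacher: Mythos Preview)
The paper does not actually supply a proof of this proposition; it is stated as a citation of \cite[Theorem~4.7]{BHa} and \cite[Proposition~7.1]{BH}, and the text moves on immediately. So there is no in-paper argument to compare against.

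That said, your outline is the standard ``easy direction'' of the Hitchin--Kobayashi correspondence, transported to the basic setting, and it is the argument that the cited references carry out. The Chern--Weil identity you write, the semistability conclusion from $\Lambda F_{D_h}=0$, and the splitting via a parallel projection when equality holds are all correct in spirit. One small point: in your identity the second fundamental form term should really be $\|(\mathrm{Id}-\pi)\,\overline\partial_E\pi\|^2_{L^2}$ rather than $\|\overline\partial_E\pi\|^2_{L^2}$; for a weakly holomorphic sub-bundle these coincide only after one uses the constraint $(\mathrm{Id}-\pi)\overline\partial_E\pi=0$ in the other direction, so be careful how you phrase it. Your identification of the main analytic issue---extending $\pi$ across $S$ with $L^2_1$ regularity and invoking transverse elliptic regularity in place of the classical Uhlenbeck--Yau argument---is exactly what \cite{BHa} addresses, and you are right that $\mathrm{codim}_{\mathbb{C}}S\geq 2$ is what makes the integrals agree.
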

       
\subsubsection{Basic Higgs Bundles and Basic Hitchin Equation}\label{Hi-BH}
In this section, we clarify the relation between a stable basic Higgs bundle and an irreducible basic Hitchin pair.\par
Let $(\nabla_h,\Phi)\in\mathcal{A}^{\mathrm{irr}}_{\mathrm{BaHit}}$. $(E,\nabla^{0,1}_h,\sqrt{-1}\Phi^{1,0})$ is a basic Higgs bundle.
We show that this Higgs bundle is stable with degree 0. Since $\Phi\in A^1_B(\mathfrak{u}(E))$, we have
\begin{equation*}
\Phi^{0,1}=-(\Phi^{1,0})_h^\dagger.
\end{equation*}
Here $(\Phi^{1,0})_h^\dagger$ is the formal adjoint of $\Phi^{1,0}$. Since $\nabla_h$ is a $h$-unitary connection and $ \nabla^{0,1}_h\Phi^{1,0}=0$, we have 
\begin{equation*}
\nabla^{1,0}_h\Phi^{0,1}=-\nabla^{1,0}_h(\Phi^{1,0})_h^\dagger=0.
\end{equation*}
Hence $D=\nabla_h+\sqrt{-1}\Phi$ is a flat bundle and $\mathrm{deg}(E)=0$. Stability of $(E,\nabla^{0,1}_h,\sqrt{-1}\Phi^{1,0})$ follows form Proposition \ref{BH KH2} and irreducibilty of $(\nabla_h,\Phi)$.\par
Let $(E,\overline\partial_E,\theta)$ be a stable basic Higgs bundle of degree 0. 
Then by Proposition \ref{BH KH1}, there exists a basic Hermitian metric $h$ such that the connection $D=\nabla_h+\theta+\theta^\dagger_h$ is flat.
Let $\Phi:=-\sqrt{-1}(\theta+\theta^\dagger_h)$. Then $(\nabla_h,\Phi)$ is an irreducible Hitchin pair.           
           
  \subsection{Harmonic Bundles}\label{sec 5.3}
 Let $M$ be a compact Riemann manifold and $E$ be a rank $r$ complex vector bundle with a Hermitian metric $h$. Let $D$ be a flat bundle. As we mentioned in the previous section we have a decomposition
 \begin{equation*}
 D=\nabla_h+\sqrt{-1}\Phi
 \end{equation*}
  such that $\nabla_h$ is a $h$-unitary connection and $\Phi$ is an $\mathrm{End}E$-valued skew-symmetric 1-form w.r.t. $h$.\par
  We recall the following theorem due to Corlette \cite{Co}.
    \begin{theorem}[\cite{Co}]\label{Co} If a flat bundle $(E,D)$ is semi-simple, then there exists a Hermitian metric $h$ on $E$ such that 
  \begin{equation*}
  (\nabla_h)^\ast\Phi=0.
  \end{equation*}
  
  Here $(\nabla_h)^\ast$ is the formal adjoint of $\nabla_h$. 
  We call the metric $h$ a harmonic metric. If $D$ is reducible, then the harmonic metric is unique up to multiplication by a constant scalar. 
  If $h$ is a harmonic metric, we call the pair $(D,h)$ a harmonic bundle.
   \end{theorem}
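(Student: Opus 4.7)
The plan is to follow Corlette's original approach via equivariant harmonic maps into a nonpositively curved symmetric space. Fix a base point and let $\rho\colon\pi_1(M)\to GL(r,\mathbb{C})$ be the monodromy of the flat connection $D$. A Hermitian metric $h$ on $E$ is the same as a $\rho$-equivariant map $f_h\colon\widetilde M\to N$, where $\widetilde M$ is the universal cover and $N:=GL(r,\mathbb{C})/U(r)$ is the nonpositively curved symmetric space of positive-definite Hermitian $r\times r$ matrices. A direct computation in a flat trivialization identifies $df_h$ with $\Phi$ and the tension field of $f_h$ with $(\nabla_h)^\ast\Phi$ (up to the standard factor). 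So the theorem is equivalent to the existence of a $\rho$-equivariant harmonic map $\widetilde M\to N$ whenever the representation $\rho$ is reductive (which is the Lie-theoretic translation of semisimplicity of $(E,D)$).

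For existence, the first step is to introduce the energy functional
\begin{equation*}
\mathcal{E}(h):=\int_M|\Phi_h|^2_h\,\mathrm{vol}_M,
\end{equation*}
whose Euler–Lagrange equation is exactly $(\nabla_h)^\ast\Phi_h=0$. The second step is to run the negative gradient flow $\partial_t h_t=-2(\nabla_{h_t})^\ast\Phi_{h_t}\cdot h_t$; standard parabolic theory gives short-time existence, and a Bochner–Weitzenböck calculation for $|df_{h_t}|^2$ on the target $N$ (nonpositive curvature!) gives long-time existence together with uniform $C^1$ bounds on $f_{h_t}$ modulo translations in $N$. The third step is to upgrade subconvergence to genuine convergence: one needs to rule out that $f_{h_t}$ escapes to infinity in the noncompact target $N/\rho(\pi_1(M))$.

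The main obstacle, and the only place where the hypothesis enters, is exactly this last step. If $f_{h_t}$ escaped to infinity, a limiting argument (replacing the sequence by renormalized Busemann functions on $N$) produces a $\rho$-invariant flag or a $\rho$-invariant parabolic subgroup that splits off a non-trivial unipotent piece of $\rho$. Semisimplicity of $(E,D)$ translates into reductivity of $\rho$, which rules this out and so yields the required $C^0$ properness of the flow. This is the deepest part of Corlette's theorem; the clean formulation is Corlette's properness lemma, and the whole mechanism goes through verbatim for any compact Riemannian base, so no special feature of the Riemann surface case is needed.

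For uniqueness in the irreducible case, suppose $h_1,h_2$ are two harmonic metrics. Set $s:=h_1^{-1}h_2\in\Gamma(\mathrm{End}(E))$, a positive self-adjoint endomorphism with respect to both metrics. A short computation using that both $h_i$ are harmonic shows that $\log s$ is $D$-parallel, so it commutes with the monodromy $\rho$. Irreducibility and Schur's lemma then force $\log s$ to be a real scalar multiple of $\mathrm{Id}_E$, i.e.\ $h_2=c\,h_1$ for some $c>0$. This completes the sketch; the existence is the serious analytic statement while uniqueness is soft.
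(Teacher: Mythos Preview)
The paper does not prove this theorem: it is quoted verbatim from Corlette \cite{Co} and used as a black box, so there is no ``paper's own proof'' to compare against. Your sketch is precisely Corlette's original argument (equivariant harmonic maps into the nonpositively curved symmetric space $GL(r,\mathbb{C})/U(r)$, with reductivity of the monodromy supplying the properness needed for convergence of the heat flow), so in that sense your approach agrees with the cited source.

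Two small remarks. First, note that the statement as printed in the paper almost certainly contains a typo: uniqueness up to scalar holds in the \emph{irreducible} case, not the reducible one, and you have (correctly) argued the irreducible case. Second, your uniqueness paragraph is a bit compressed: the assertion that $\log s$ is $D$-parallel is not a one-line identity but comes from the convexity of the energy along the geodesic $t\mapsto h_1e^{t\log s}$ in the space of metrics; vanishing of the first variation at both endpoints together with convexity forces the second variation to vanish identically, and it is that computation which yields $D(\log s)=0$. With that clarification, the sketch is sound.
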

  From now we assume $(M,(T^{1,0},S,I),(\eta,\xi))$ to be a compact Sasakian manifold.
  Under this assumption, harmonic metrics become basic metrics:
  \begin{proposition}[{\cite[Proposition 4.1, Theorem 4.2.]{BH}}]\label{BH FH}
  Let $(M,(T^{1,0},S,I),(\eta,\xi))$ be a compact Sasakian manifold and let $(E,D)$ be a flat bundle with a Hermitian metric $h$.
   Let $D=\nabla_h+\sqrt{-1}\Phi$ be the decomposition of (\ref{decomp}). 
   Then the following are equivalent:
  \begin{itemize}
  \item $\Phi(\xi)=0,$
  \item $h$ is a basic metric i.e. $(h\in A_{B}(E^\vee\otimes\overline E^\vee))$.
  \end{itemize}
  This condition implies that $\Phi\in A^1_B(\mathfrak{u}(E))$.\par
 Moreover, when $h$ is a harmonic metric, then the following are equivalent:
 \begin{itemize}
 \item $(\nabla_h)^\ast\Phi=0$ (i.e. $h$ is a harmonic metric),
 \item The Hermitian metric $h$ is basic ($\iff \Phi(\xi)=0$ and hence $\Phi\in A^1_B(\mathfrak{u}(E))$ by above) and for the decomposition
 \begin{equation*}
 \sqrt{-1}\Phi=\theta^{1,0}_{h,\xi}+\theta^{0,1}_{h,\xi}
  \end{equation*}
  with $\theta^{1,0}_{h,\xi}\in A^{1,0}_B(\mathrm{End}E)$ and $\theta^{0,1}_{h,\xi}\in A^{0,1}_B(\mathrm{End}E)$,
  \begin{equation*}
  \overline\partial_{h,\xi} \overline\partial_{h,\xi}=0,\ \theta^{1,0}_{h,\xi}\wedge\theta^{1,0}_{h,\xi}=0,\  \overline\partial_{h,\xi} \theta^{1,0}_{h,\xi}=0.
          \end{equation*}
          Here $  \overline\partial_{h,\xi}$ is the $(0,1)$-part of $\nabla_h$. We note that $(E, \overline\partial_{h,\xi},\theta^{1,0}_{h,\xi})$ is a basic Higgs bundle.
 \end{itemize}
  \end{proposition}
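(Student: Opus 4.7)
The plan is to prove the two equivalences separately, reducing the first to an explicit local computation in a flat frame and deducing the second by combining flatness and harmonicity with the transverse K\"ahler calculus on basic forms.

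For the first equivalence, I would fix a local flat frame $e_1,\ldots,e_r$ for $D$, which by \cite{Ko} serves as a basic trivialization of $E$. In such a frame $D = d$ and the Gram matrix $H = (h(e_i,e_j))$ encodes $h$. Writing $\nabla_h = d + \omega$ so that $\sqrt{-1}\Phi$ is represented by $-\omega$, the unitarity $\nabla_h h = 0$ yields $dH = \omega^{*} H + H\omega$, and combining this with the $h$-skew-hermiticity of $\Phi$ (equivalently $\omega^{*} H = -H\omega$ after absorbing the $\sqrt{-1}$) gives the closed-form expression
\[
\sqrt{-1}\,\Phi \;=\; -\tfrac{1}{2}\,H^{-1}\, dH
\]
as a matrix-valued $1$-form in the flat frame. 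Contracting with $\xi$ then produces $\Phi(\xi) = 0 \iff i_\xi dH = 0$. Since $H$ is a matrix of functions, $i_\xi H = 0$ holds trivially and $i_\xi dH = 0$ is the same as $\mathcal{L}_\xi H = 0$; together these say precisely that $H$ is basic in the flat frame, i.e., that $h$ is a basic metric. The inclusion $\Phi \in A^1_B(\mathfrak{u}(E))$ follows from the same formula: skew-hermiticity is built in, and once $H$ is basic so is $H^{-1}dH$, hence so is $\Phi$.

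For the second part, assume $h$ is basic, so that $\Phi \in A^1_B(\mathfrak{u}(E))$, and decompose by transverse bidegree $\nabla_h = \partial_h + \bar\partial_h$ and $\sqrt{-1}\Phi = \theta + \theta^\dagger_h$ with $\theta \in A^{1,0}_B(\mathrm{End}\,E)$. Splitting $D^2 = 0$ into bidegrees $(2,0), (1,1), (0,2)$ gives coupled identities among $\partial_h, \bar\partial_h, \theta, \theta^\dagger_h$. I would then invoke the transverse K\"ahler identities on basic forms -- which hold because $d\eta$ is the transverse K\"ahler form and the basic Hodge calculus of Section \ref{b-pre} provides the expected adjoint relations -- and combine them with the harmonicity $(\nabla_h)^{*}\Phi = 0$. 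The standard Simpson--Corlette argument then carries over to this basic setting and yields $\bar\partial_h^{\,2} = 0$, $\theta \wedge \theta = 0$, and $\bar\partial_h\,\theta = 0$, so that $(E, \bar\partial_h, \theta)$ is a basic Higgs bundle.

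The main obstacle is the second part: justifying that the K\"ahler identities and the Simpson--Corlette decoupling argument go through verbatim on basic forms of a Sasakian manifold, where the ambient metric $g_\eta$ is not K\"ahler. The key point is that restricted to basic forms the transverse metric behaves like a K\"ahler metric with respect to the transverse complex structure, and that the basic Hodge star $\star_\xi$ (not $\ast$) is the correct duality -- both features already exploited in Lemma \ref{f-ad} and Proposition \ref{co 2nd}. Once these identities are established, the decoupling is standard and the Higgs conditions drop out.
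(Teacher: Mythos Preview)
The paper does not give its own proof of this proposition; it is quoted directly from \cite[Proposition~4.1, Theorem~4.2]{BH}, so there is no in-paper argument to compare against. That said, your first equivalence is correct and is essentially the computation carried out in \cite{BH}: in a flat frame the $h$-self-adjoint part of $D$ is (up to sign) $\tfrac{1}{2}H^{-1}dH$, so $\Phi(\xi)=0$ is literally the condition that the Gram matrix is $\xi$-invariant, i.e.\ that $h$ is basic.

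There is, however, a genuine gap in your treatment of the second equivalence. You open with ``assume $h$ is basic'' and then run the transverse Simpson--Corlette decoupling to extract the Higgs data from flatness plus $(\nabla_h)^\ast\Phi=0$. But the proposition asserts more: harmonicity of $h$ on a Sasakian manifold \emph{forces} $h$ to be basic (equivalently $\Phi(\xi)=0$). This implication is the heart of \cite[Theorem~4.2]{BH} and is not a consequence of the transverse K\"ahler identities you invoke --- those identities live on $A^\ast_B$ and are only available \emph{after} you know $\Phi$ is basic. The argument in \cite{BH} uses Sasakian geometry in a sharper way (the Reeb field is unit Killing for $g_\eta$, and one combines this with the harmonic map equation to kill the $\xi$-component of $\Phi$). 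Without supplying this step, your proposal only establishes ``(basic $+$ harmonic) $\Rightarrow$ basic Higgs structure'', which is strictly weaker than the stated equivalence; you should isolate and prove ``harmonic $\Rightarrow$ $\Phi(\xi)=0$'' as a separate lemma before appealing to the transverse K\"ahler calculus.
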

 According to \cite[p.20]{BH}, combining Proposition \ref{BH KH1}, \ref{BH KH2}, and \ref{BH FH}, there is an one-on-one correspondence between the following objects on a compact Sasakian manifold $(M,(T^{1,0},S,I),(\eta,\xi))$:
 \begin{itemize}
 \item the semi-simple flat bundle $(E,D)$,
 \item the polystable basic Higgs bundle with $c_{1,B}(E)=c_{2,B}(E)=0$.
  \end{itemize}
 We also note that harmonic metrics on Sasakian manifolds have been studied recently in \cite{WZ}. 
 \subsubsection{Harmonic Bundles and Basic Hitchin Equation}\label{Ha-BH}
  In this section,  we clarify the relation between a harmonic bundle and a basic Hitchin pair.\par
   Let $(\nabla_h,\Phi)\in \mathcal{A}^{\mathrm{irr}}_{\mathrm{BaHit}}$. Since $(E,\nabla^{0,1}_h,\sqrt{-1}\Phi^{1,0})$ is a basic Higgs bundle, $(E,D=\nabla_h+\sqrt{-1}\Phi)$ is a harmonic bundle by Proposition \ref{BH FH}. Simplicity of $(E,D,h)$ follows from the irreducibility of  $(\nabla_h,\Phi)$.\par
  Let $(E,D)$ be a simple flat bundle. From Proposition \ref{Co}, we have a harmonic metric $h$. Let $D=\nabla_h+\sqrt{-1}\Phi$ be the decomposition of (\ref{decomp}), then by Proposition \ref{BH KH2} and \ref{BH FH}, $(E,\nabla^{0,1}_h,\sqrt{-1}\Phi^{1,0})$ is a stable basic Higgs bundle of degree 0. Then $(\nabla^{0,1}_h,\sqrt{-1}\Phi^{1,0})$ is an irreducible basic Hitchin pair. 

\subsection{Basic Hitchin Equation and ASD Contact Instanton}\label{sec 5.4}
In this section, we show that a solution of the basic Hitchin equation on $\mathbb{R}^3$ can be obtained by dimensional reduction of an ASD contact instanton on $\mathbb{R}^5$.
We prove this as follows: We first prove that an ASD contact instanton on $\mathbb{R}^5$ becomes a basic connection under an appropriate gauge. 
Next, we assume that the ASD contact instanton is invariant under two directions and show that it induces a solution of the basic Hitchin equation on $\mathbb{R}^3$.
\par
Before we proceed, we recall the contact structure on $\mathbb{R}^{2n+1}$ based on \cite{Bl}.
Let $(t,u)=(t,x_1,\dots, x_n,y_1,\dots,y_n)$ be the canonical coordinate of  $\mathbb{R}^{2n+1}$.
Then, we have the standard contact structure 
\begin{equation*}
\eta:=dt-\sum_iy_idx_i
\end{equation*}
with the Reeb vector field
\begin{equation*}
\xi:=\frac{\partial}{\partial t}.
\end{equation*}
We equip $\mathbb{R}^{2n+1}$ with the Riemannian metric
\begin{equation}\label{eq 5.4.7}
g = \sum_{i=1}^n (dx_i^2 + dy_i^2) + \eta^2.
\end{equation}
Then $(\eta,\xi,g)$ defines the standard Sasakian structure on $\mathbb{R}^{2n+1}$ (see e.g. \cite[Chapter 6]{Bl}).
We note that $g$ differs from the usual Euclidean metric.
\par
From now on, the Hodge star operator $\ast$ is taken with respect to the above Sasakian metric $g$. 
Moreover, the symbols $\eta$ and $\xi$ always denote the contact form and the Reeb vector field defined above.\par
We denote the trivial rank $r$ complex vector bundle $\mathbb{R}^5 \times \mathbb{C}^r$ over $\mathbb{R}^5$ (resp. $\mathbb{R}^3 \times \mathbb{C}^r$ over $\mathbb{R}^3$) by $\mathbb{C}^r_{\mathbb{R}^5}$ (resp. $\mathbb{C}^r_{\mathbb{R}^3}$).\par
 Let $\mathfrak{u}(r)$ be the Lie algebra of the unitary group $U(r)$ and $A(\mathbb{R}^5, \mathfrak{u}(r))$ be the space of $\mathfrak{u}(r)$-valued smooth functions.
 We take $A_{x_i},A_{y_i},A_t\in A^1(\mathbb{R}^5, \mathfrak{u}(r))$ and define a connection $D$ of $\mathbb{C}^r_{\mathbb{R}^5}$ as 
 \begin{equation*}
 D=d+A=d+\sum^2_{i=1}(A_{x_i}dx_i+A_{y_i}dy_i)+A_tdt.
 \end{equation*}
 We note that $D$ is a unitary connection with respect to the standard Hermitian metric of $\mathbb{C}^r_{\mathbb{R}^5}$.
   Let $F_D=D^2$ be the curvature of $D$.
Following \cite{BHa0}, we say that $D$ is an ASD contact instanton if 
  \begin{equation*}
  \ast F_D+\eta\wedge F_D=0.
  \end{equation*}
 As explained in \cite[Section~2.1, p.~568]{BHa0}, if $D$ is an ASD contact instanton, then 
 \begin{equation}\label{eq 5.4}
 i_\xi F_D=0
 \end{equation}
holds.
We now assume $D$ is an ASD contact instanton.
Using the property (\ref{eq 5.4}), we show that there exists a global gauge transformation 
\begin{equation*}
g:\mathbb{R}^5\to U(r)
\end{equation*}
such that, after applying this gauge transformation, the connection $D$ becomes a basic connection.
We construct such a gauge transformation $g$ as the solution of the ODE
\begin{equation*}
\frac{d}{dt}g(t,u)=-A_t(t,u) g(t,u), \,\,g(0,u)=Id.
\end{equation*}
  $g$ takes value in $U(r)$ since $A_t(t,u)$ is $\mathfrak{u}(r)$-valued.
Let $A(g)=\sum^2_{i=1}(A_{x_i}(g)dx_i+A_{y_i}(g)dy_i)+A_t(g)dt$ be the connection matrix of $D$ with respect to $g$.
Then we have 
\begin{equation*}
A(g)=g^{-1}Ag+g^{-1}dg.
\end{equation*}
 To prove that $D$ is basic in the gauge $g$, we need to show 
 \begin{align}
i_\xi A(g)=A_t(g)&=0,\label{eq 5.4.1}\\
i_\xi dA(g)&=0.\label{eq 5.4.2} 
 \end{align}
 The first equation (\ref{eq 5.4.1}) holds since
  \begin{align*}
 A_t(g)&=g^{-1} A_t g+g^{-1}\frac{\partial}{\partial t}g\\
 &=g^{-1} A_t g-g^{-1} A_t g\\
 &=0.
   \end{align*}
 To prove the second equation, we use the property (\ref{eq 5.4}).
 We note that (\ref{eq 5.4}) is independent of the gauge.
 \begin{align*}
 0=i_\xi F_D&=i_\xi(dA(g)+A(g)\wedge A(g))\\
 &=i_\xi dA(g).
  \end{align*}
 For the third equation, we used $i_\xi A(g)=0$.
 
\begin{remark}
In \cite[Section 2.1]{BHa0}, the authors proved that if a connection $D$ on a principal bundle $P\to M$ over a five-dimensional contact manifold $M$ is an ASD contact instanton, then the Reeb foliation lifts to $P$ and hence $P$ admits a natural transverse structure.
With respect to this structure, $D$ is a transverse (basic) connection.\par
The discussion above may be regarded as an explicit verification of this fact in the special case of the trivial bundle over $\mathbb{R}^5$. 
\end{remark}

So far, we showed that if $D$ is an ASD contact instanton, then there exists a gauge transformation such that the gauge-transformed connection is basic. 
Therefore, without loss of generality, we may assume from now on that 
 \begin{equation*}
D=d+A=d+\sum^2_{i=1}(A_{x_i}dx_i+A_{y_i}dy_i)+A_tdt.
\end{equation*}
is both an ASD contact instanton and basic. 
Since $D$ is basic, $i_\xi A=0, i_\xi dA=0$ holds and this implies
\begin{align*}
A_t=&0,\\
\frac{\partial}{\partial t}A_{x_i}= \frac{\partial}{\partial t}A_{y_i}=&0\,\, (i=1,2).
\end{align*}
Hence, the curvature $F_D$ has the form
\begin{align*}
F_D&=F_{x_1y_1}dx_1\wedge dy_1+F_{x_1x_2}dx_1\wedge dx_2+F_{x_1y_2}dx_1\wedge dy_2\\
&+F_{y_1x_2}dy_1\wedge dx_2+F_{y_1y_2}dy_1\wedge dy_2+F_{x_2y_2}dx_2\wedge dy_2.
 \end{align*}
 Recall that the Hodge star $\ast$ is taken with respect to the Sasakian metric $g$ defined in \eqref{eq 5.4.7}.
 The frame $\{dx_1,dy_1,dx_2,dy_2,\eta\}$ is orthonormal with respect to $g$.
Hence, substituting the expression of $F_D$ into the ASD contact instanton equation, we have
 \begin{equation*}
\begin{split} 
0=&\ast F_D+\eta\wedge F_D\\
=&F_{x_1y_1}\eta\wedge dx_2\wedge dy_2-F_{x_1x_2}\eta\wedge dy_1\wedge dy_2+F_{x_1y_2}\eta\wedge dy_1\wedge dx_2\\
&+F_{y_1x_2}\eta\wedge dx_1\wedge dy_2-F_{y_1y_2}\eta\wedge dx_1\wedge dx_2+F_{x_2y_2}\eta\wedge dx_1\wedge dy_1\\
&+F_{x_1y_1}\eta\wedge dx_1\wedge dy_1+F_{x_1x_2}\eta\wedge dx_1\wedge dx_2+F_{x_1y_2}\eta\wedge dx_1\wedge dy_2\\
&+F_{y_1x_2}\eta\wedge dy_1\wedge dx_2+F_{y_1y_2}\eta\wedge dy_1\wedge dy_2+F_{x_2y_2}\eta\wedge dx_2\wedge dy_2\\
=&(F_{x_2y_2}+F_{x_1y_1})\eta\wedge dx_1\wedge dy_1+(-F_{y_1y_2}+F_{x_1x_2})\eta\wedge dx_1\wedge dx_2\\
&+(F_{y_1x_2}+F_{x_1y_2})\eta\wedge dx_1\wedge dy_2+(F_{x_1y_2}+F_{y_1x_2})\eta\wedge dy_1\wedge dx_2\\
&+(-F_{x_1x_2}+F_{y_1y_2})\eta\wedge dy_1\wedge dy_2+(F_{x_1y_1}+F_{x_2y_2})\eta\wedge dx_2\wedge dy_2.
\end{split}
 \end{equation*} 
By comparing the coefficients, we obtain
  \begin{equation}\label{eq 5.4.3}
\begin{split} 
F_{x_1y_1}+F_{x_2y_2}&=0,\\
F_{x_1x_2}-F_{y_1y_2}&=0,\\
F_{x_1y_2}+F_{y_1x_2}&=0.
\end{split}
 \end{equation} 

 We now assume that $A_{x_1}, A_{y_1}, A_{x_2}, A_{y_2}$ are constant along the $x_2$- and $y_2$-directions, or equivalently, $\partial_{x_2}A_\alpha=\partial_{y_2}A_\alpha=0,\, \alpha\in\{x_1,y_1,x_2,y_2\}$ holds.
Under this assumption, the equation (\ref{eq 5.4.3}) is equivalent to 

\begin{align}
-\frac{\partial}{\partial y_1}A_{x_1}+\frac{\partial}{\partial x_1}A_{y_1}+[A_{x_1},A_{y_1}]+[A_{x_2},A_{y_2}]&=0,\label{eq 5.4.4}\\
\frac{\partial}{\partial x_1}A_{x_2}+[A_{x_1},A_{x_2}]-\frac{\partial}{\partial y_1}A_{y_2}-[A_{y_1},A_{y_2}]&=0,\label{eq 5.4.5}\\
\frac{\partial}{\partial x_1}A_{y_2}+[A_{x_1},A_{y_2}]+\frac{\partial}{\partial y_1}A_{x_2}+[A_{y_1},A_{x_2}]&=0\label{eq 5.4.6}.
\end{align} 
 
Next, we define $\nabla$ and $\Phi$ as  
\begin{align*}
&\nabla:=d+A_{x_1}dx_1+A_{y_1}dy_1\\
&\Phi:=A_{y_2}dx_1+A_{x_2}dy_1.
\end{align*}
Since $A_{x_1}, A_{y_1}, A_{x_2}, A_{y_2}$  are constant along the $x_2$- and $y_2$-directions, they descend to $\mathbb{R}^3$.
Hence $\nabla$ defines a unitary connection on $\mathbb{C}^r_{\mathbb{R}^3}$ and $\Phi$ defines a $\mathfrak{u}(r)$-valued 1-form of $\mathbb{R}^3$.
Since $D$ is basic, $\nabla$ and $\Phi$ are also basic.
By equations (\ref{eq 5.4.4}), (\ref{eq 5.4.5}), (\ref{eq 5.4.6}), we have
\begin{align*}
F_{\nabla}-\Phi\wedge\Phi=&\bigg(-\frac{\partial}{\partial y_1}A_{x_1}+\frac{\partial}{\partial x_1}A_{y_1}+[A_{x_1},A_{y_1}]+[A_{x_2},A_{y_2}]\bigg)dx_1\wedge dy_1=0,\\
\nabla\Phi=&\bigg(-\frac{\partial}{\partial y_1}A_{y_2}-[A_{y_1},A_{y_2}]+\frac{\partial}{\partial x_1}A_{x_2}+[A_{x_1},A_{x_2}]\bigg)dx_1\wedge dy_1=0,\\
\nabla\star_\xi\Phi=&\nabla(A_{y_2}dy_1-A_{x_2}dx_1)\\
=&\bigg(\frac{\partial}{\partial x_1}A_{y_2}+[A_{x_1},A_{y_2}]+\frac{\partial}{\partial y_1}A_{x_2}+[A_{y_1},A_{x_2}]\bigg)dx_1\wedge dy_1=0.&
\end{align*}
 Hence, the pair $(\nabla,\Phi)$ is a solution of the basic Hitchin equation.
 This shows that, under the above invariance assumption, the basic Hitchin equation can be obtained by dimensional reduction of the ASD contact instanton.
 
\end{document}